\newcommand{\mres}{\mathbin{\vrule height 1.6ex depth 0pt width
0.13ex\vrule height 0.13ex depth 0pt width 1.3ex}}
\newcommand{\overbar}[1]{\mkern 1.5mu\overline{\mkern-1.5mu#1\mkern-1.5mu}\mkern 1.5mu}
\newcommand{\R}{\mathbb{R}}
\newcommand*\dd{\mathop{}\!\mathrm{d}}
\renewcommand{\div}{\text{div}} 
\renewcommand{\phi}{\varphi}
\renewcommand{\tilde}{\widetilde}
\renewcommand{\mapsto}{\longmapsto}
\renewcommand{\epsilon}{\varepsilon}
\newlist{steps}{enumerate}{1}
\setlist[steps, 1]{label = Step \arabic*:}
\newtheorem{theo}{Theorem}[section]
\newtheorem{prop}[theo]{Proposition}
\newtheorem{lem}[theo]{Lemma}
\newtheorem{rem}[theo]{Remark}
\newtheorem{definition}[theo]{Definition}
\numberwithin{equation}{section}
\title{Inevitable monokineticity of strongly singular alignment}
\author{Michał Fabisiak and Jan Peszek}
\date{\today}
\address{Institute of Applied Mathematics and Mechanics, University of Warsaw, ul. Banacha 2, 02-097 Warszawa, Poland}
\email{m.fabisiak@mimuw.edu.pl, j.peszek@mimuw.edu.pl}
\begin{document}

\thanks{\textbf{Acknowledgement:} The paper has been partly supported by the Polish National Science Centre’s Grant No. 2018/31/D/ST1/02313 (SONATA). JP's work has been additionally supported by the Polish National Science Centre's Grant No. 2018/30/M/ST1/00340 (HARMONIA)}

\subjclass{35Q70,
35D30, 
35L81, 
35Q83 }

\keywords{Cucker-Smale model, Euler-alignment system, weak solutions, monokinetic solutions, mean-field limit, singular interactions}

\maketitle

\begin{abstract}
    We prove that certain types of measure-valued mappings are monokinetic i.e. the distribution of velocity is concentrated in a Dirac mass. These include weak measure-valued solutions to the strongly singular Cucker-Smale model with singularity of order $\alpha$ greater or equal to the dimension of the ambient space. Consequently, we are able to answer a couple of open questions related to the singular Cucker-Smale model. First,  we prove that weak measure-valued solutions to the strongly singular Cucker-Smale kinetic equation are monokinetic, under very mild assumptions that they are uniformly compactly supported and weakly continuous in time. This can be interpreted as a rigorous derivation of the macroscopic fractional Euler-alignment system from kinetic Cucker-Smale equation without the need to perform any hydrodynamical limit.
    This suggests superior suitability of the macroscopic framework to describe large-crowd limits of strongly singular Cucker-Smale dynamics.
    Second, we perform a direct micro- to macroscopic mean-field limit from the Cucker-Smale particle system to the fractional Euler-alignment model. This leads to the final result -- existence of weak solutions to the fractional Euler-alignment system with almost arbitrary initial data in $\R^1$, including the possibility of vacuum. Existence can be extended to $\R^2$ under the a priori assumption that the density of the mean-field limit has no atoms.
\end{abstract}


\section{Introduction}
Cucker-Smale (CS) model \cite{CS-07} describes the motion of an ensemble of self-propelled particles, which cooperate in an attempt to align their velocities and flock. One of the defining factors of the CS model is the communication weight which controls the intensity of the interactions in terms of the distances between the particles. Classically, it is assumed to be bounded and Lipschitz continuous. In \cite{HL-09, P-14, P-15, CCMP-17}, a number of interesting phenomena exhibited by the CS model with singular communication was discovered, such as finite-time alignment and sticking of the particles or unconditional collision-avoidance, see also \cite{M-18, YYC-20}. Among them is the discovery of the Euler-type structure of the problem.  However the basic question of well-posedness, and even existence, for the singular CS model on the kinetic and hydrodynamical level remains open in many vital cases. It indicates that perhaps the very choice of a suitable framework to describe large-crowd singular CS dynamics  is the true question that has to be addressed.

Consider the microscopic CS model of $N$ particles following the ODE system
\begin{align}\label{micro}
    \begin{cases}
    \dot{x}_i = v_i, \\
    \dot{v}_i = \displaystyle\frac{1}{N}\displaystyle\sum\limits_{i\neq j=1}^N \psi(|x_i-x_j|) (v_j-v_i), \\
(x_i(0), v_i(0))= (x_{i0}, v_{i0}) \in \R^{2d},
    \end{cases}
\end{align}
where $x_i(t)$ and $v_i(t)$ are the position and velocity of the $i$th particle at the time $t\geq 0$, respectively. We fix our attention on the strongly singular communication weight
\begin{equation}\label{psi}
  \psi(s) = s^{-\alpha},\quad \alpha\geq d.
\end{equation}

As the number of particles increases to infinity, system \eqref{micro} is replaced by the following mesoscopic CS kinetic equation \cite{HL-09, CZ-21, MP-18, CC-21}
\begin{align}\label{meso}
    \begin{cases}
    \partial_t \mu + v \cdot \nabla_x \mu + \div_v \left( F(\mu) \mu \right) = 0, \quad x\in \R^d, v \in \R^d\\
    F(\mu)(t,x,v) := \displaystyle\int_{\R^{2d}} \psi(|x-y|)(w-v)\mu(t,y,w) \dd w \dd y,
     \end{cases}
\end{align}
where $\mu = \mu(t,x,v)$ is the distribution of the particles that at the time $t\geq 0$ are passing through position $x\in\R^d$ with velocity $v\in\R^d$.

Our goal is to prove that, due to the strong singularity $\alpha\geq d$, any sensible weak formulation of \eqref{meso} inevitably leads to the monokineticity of measure-valued solutions, meaning that for a.a. $t$ and $x$ the measure $\sigma_{t,x}(\cdot) = \mu(t,x,\cdot)$ is concentrated in a Dirac delta. In fact we prove a general theorem ensuring monokineticity of solutions to kinetic equations with dissipating kinetic energy, such as equations related to granular gases \cite{JR-16}. Such a proposition has three heavy implications, which can be viewed as our main results. First, it indicates that the kinetic setting \eqref{meso} is not necessarily suitable to describe strongly singular CS interactions, as it reduces to the monokinetic case -- leading automatically to the macroscopic scale and to the fractional Euler-aligmnent system. Such a result can be viewed as the derivation of the macroscopic Euler-alignment system from the kinetic Cucker-Smale equation, but -- unlike \cite{KMT-15, FK-19, PS-17} -- without the need to take the hydrodynamical limit. Second with $\alpha\in[d,2]$, we prove that attempting to pass to the mean-field limit from \eqref{micro} to \eqref{meso}, again, leads to monokineticity, which enables us to directly pass from microscopic to macroscopic description. This yields the third result -- we construct weak measure-valued solutions to the fractional Euler-alignment system initiated in any compactly supported initial datum in the whole Euclidean space of dimension 1. Previous results provided existence either with assumptions related to the critical threshold \cite{tan_euler-alignment_2020-1, HT-17, CCTT-16, TT-14}, with positive solutions in the whole space \cite{DMPW-19}
 or in the torus \cite{DKRT-18, ST-17, ST-18, STII-17, S-19}. It is noteworthy that our approach admits solutions with general initial data and vacuum.  The 1D existence can be conditionally extended to dimension 2, with $\alpha=2$, and with the a priori assumption that the mean-field limit is non-atomic.

\subsection{Monokineticity} In order to  rigorously introduce the notion of monokineticity that we use throughout the paper, we first need to invoke the disintegration theorem (cf. Theorem \ref{disintegration} in the appendix). Let $\mu = \mu(t,x,v)$ be a finite Radon measure on $[0,T]\times\R^{2d}$ and assume that the $t$-marginal of $\mu$ is the 1D Lebesgue measure on $[0,T]$. Then by Theorem \ref{disintegration} there exists a unique family $\{\mu_t\}_{t\in[0,T]}$ of probability measures on $\R^{2d}$ such that $\mu = \mu_t\otimes \lambda^1(t)$ (cf. the notation in  \eqref{disint_form}). Following this disintegration, throughout the paper we shall identify $\mu$ with the family $\{\mu_t\}_{t\in[0,T]}$ and write
\begin{equation}\label{mufamily}
     \mu = \{\mu_t\}_{t\in[0,T]}\quad \mbox{which we define as}\quad \mu(t,x,v) = \mu_t(x,v)\otimes\lambda^1(t).
\end{equation}

\noindent
For a.a. $t\in[0,T]$ each probability measure $\mu_t$ can be further disintegrated with respect to its $x$-marginal
$$ \rho_t(x):= \int_{\R^d}\dd \mu_t(x,v), $$
which we interpret as the local density. Thus, again in the sense of \eqref{disint_form}, we have
\begin{equation}\label{mudis}
    \mu(t,x,v) = \sigma_{t,x}(v)\otimes\rho_t(x)\otimes \lambda^1(t),
\end{equation}
and for a.a. $t\in[0,T]$ and $\rho_t$-a.a. $x$ the measure $\sigma_{t,x}$ is a probability measure on $\R^d$, which governs the local velocity distribution within $\mu$.

\bigskip
\begin{mdframed}
We say that $\mu$ is {\it monokinetic} if for a.a. $t\in[0,T]$ and $\rho_t$-a.a. $x$ the measure $\sigma_{t,x}$ is a Dirac delta concentrated in some $u(t,x)\in\R^d$ i.e.
$$ \mu(t,x,v) = \delta_{u(t,x)}(v)\otimes\rho_t(x)\otimes\lambda^1(t). $$
Then $(t,x)\mapsto u(t,x)$ is a function defined a.e. with respect to $\rho_t(x)\otimes\lambda^1(t)$, which plays the role of velocity.
\end{mdframed}

\bigskip
\noindent
In order to better understand our main theorem let us begin by recalling \cite{JR-16}, by {\sc Jabin and Rey}, wherein the authors prove that if $\mu$ is a weak solution to the equation

\begin{equation}\label{jabineq}
    \partial_t\mu + v\partial_x\mu = -\partial_{vv} m, 
\end{equation} 

\medskip
\noindent
then it is monokinetic. Here $m$ is any measure and it is assumed that $\mu$ is BV in time with values in some negative space (e.g. it is measure-valued). Such a framework is too restrictive for the purpose of application to the singular CS model \eqref{micro} and \eqref{meso}. We need to perform the following adjustments. First, we need our result to be applicable in a multi-D situation. Second, our prospective mean-field approximation is not uniformly continuous in time. In fact, as hinted in our very notion of monokineticity, our version of $\mu$ is defined a.e. and can have discontinuities. And lastly, the structure of our variant of the right-hand side of \eqref{jabineq} is more complex, namely we have

$$ \partial_t\mu + v\cdot\nabla_x\mu = Q(\mu), $$

\medskip
\noindent
where $Q(\mu)$ comes from the singular term $F(\mu)$ in \eqref{meso}, which, as we will show soon in Definition \ref{weakkinetic}, can only be tested with functions $\phi$ with uniformly Lipschitz continuous $(x,v)\mapsto \nabla_v\phi(t,x,v)$. In particular, the dependence on $x$ is more subtle compared to the original problem \eqref{jabineq}. These two issues: the lack of time-continuity and less regular right-hand side force us to operate with different assumptions, which are not easy to state in a straightforward way. Thus, we opt to introduce our assumptions in a rather abstract fashion. We shall assume that our family $\{\mu_t\}_{t\in[0,T]}$ in \eqref{mufamily} is uniformly compactly supported and
\begin{description}    \item[(MP) Locally mass preserving] For any set $C\subset \R^{d}$ and all $t_0\leq t\in[0,T]$ we have
$$ \rho_t(\overbar{C} + (t-t_0)\overbar{B(M)})\geq \rho_{t_0}(C), $$
where $M$ is the uniform bound of $v$-support of $\mu_t$, and $B(M)$ is a ball centered at $0$ with radius $M$, while $\overbar{C}$ denotes the closure of $C$.
    \item[(SF)  Steadily flowing]  There exists a full measure set $A\subset[0,T]$, such that for all $t_0\in A$ and any smooth compactly supported function $0\leq\phi\in C_c^\infty(\R^d)$ the family of finite Radon measures $\rho_{t_0,t}[\phi]$ defined for any Borel set $B\subset\R^d$ as
$$ \rho_{t_0,t}[\phi](B) := \int_{B\times\R^d}\phi(v)\dd T^{t_0,t}_\#\mu_t $$
is narrowly continuous at $t=t_0$, as a function of $t\in[0,T]$ restricted to $A$.
Here  $T^{t_0,t}_\#\mu_t$ is the pushforward measure of $\mu_t$ along $T^{t_0,t}(x,v) = (x-(t-t_0)v,v)$. Pushforward measure and the narrow topology are defined and discussed in the notation at the end of the introduction. We emphasise that the function $\rho_{t_0,t}[\phi]$ may be discontinuous everywhere; we require continuity after restriction to $A$, similar to how monotone functions are continuous after restriction to a full measure set.
\end{description}

We are now able to state our main theorem.

\begin{theo}\label{main1}
Suppose that the family of uniformly compactly supported probability measures $\{\mu_t\}_{t\in[0,T]}$ on $\R^{2d}$
is locally mass preserving and steadily flowing. Assume further that
\begin{align}\label{main1-as1}
    \int_0^T D^\alpha[\mu_t]\dd t:= \int_0^T \int_{\R^{4d}\setminus \{x=x'\}} \frac{|v-v'|^{\alpha+2}}{|x-x'|^\alpha}\dd [\mu_t\otimes\mu_t] \dd t <\infty, \qquad \alpha\geq d.
\end{align}
Then $\mu$ is monokinetic.
\end{theo}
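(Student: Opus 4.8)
The plan is to show that the finiteness of the dissipation functional in \eqref{main1-as1} forces the velocity distribution to concentrate. The key heuristic is that $D^\alpha[\mu_t]$ measures, in a weighted sense, how spread out velocities are among particles that are close in space; if this integrates to something finite over $[0,T]$ while mass is continuously being transported (via \textbf{(MP)} and \textbf{(SF)}), then at almost every time the only way to keep the integral finite is to have $\sigma_{t,x}$ be a Dirac mass for $\rho_t$-a.a.\ $x$. More precisely, I would first argue by contradiction: suppose $\mu$ is not monokinetic, so there is a positive-measure set of times $t$ for which $\sigma_{t,x}$ has positive variance on a positive-$\rho_t$-measure set of $x$. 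The goal is to derive $\int_0^T D^\alpha[\mu_t]\,dt = \infty$.

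The main technical device I would use is the free-transport flow $T^{t_0,t}(x,v) = (x-(t-t_0)v,v)$ appearing in \textbf{(SF)}. The point of pushing $\mu_t$ forward along $T^{t_0,t}$ is that it ``rewinds'' the spatial positions to time $t_0$ using the current velocities, so that two particles with the same position $x$ at time $t_0$ but different velocities $v\neq v'$ will have spatial separation $|x-x'| = |t-t_0|\,|v-v'|$ at time $t$ (to leading order, ignoring the alignment force — which is exactly the content of \textbf{(SF)} and \textbf{(MP)}). Substituting this into the integrand of $D^\alpha$ gives
\begin{equation}\label{eq:heuristic}
  \frac{|v-v'|^{\alpha+2}}{|x-x'|^\alpha} \;\approx\; \frac{|v-v'|^{\alpha+2}}{|t-t_0|^\alpha |v-v'|^\alpha} \;=\; \frac{|v-v'|^2}{|t-t_0|^\alpha}.
\end{equation}
Since $\alpha \geq d \geq 1$, the factor $|t-t_0|^{-\alpha}$ is non-integrable near $t_0$. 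Thus, if there is a nonvanishing amount of velocity variance concentrated near a spatial point at time $t_0$ — which by \textbf{(MP)} persists (the mass cannot escape too fast) and by \textbf{(SF)} is detected by the narrowly continuous measures $\rho_{t_0,t}[\phi]$ — the time integral of $D^\alpha[\mu_t]$ diverges. The role of \textbf{(SF)} is precisely to guarantee that at a.e.\ $t_0$ we can compare $\mu_t$ for $t$ near $t_0$ with the rewound configuration and quantify, via test functions $\phi$, the mass of velocity-pairs $(v,v')$ with $|v-v'|$ bounded below that sit near a common spatial location; \textbf{(MP)} guarantees the spatial mass doesn't disperse faster than the transport bound $M$, so the near-diagonal spatial mass stays controlled from below.

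Concretely the steps are: (i) reduce to showing that for $t_0$ in a full-measure set, if $\sigma_{t_0,x}$ is not a.s.\ Dirac then $\liminf_{t\to t_0}\frac{1}{(t-t_0)^?}\,(\text{relevant near-diagonal mass of }\mu_t\otimes\mu_t\text{ weighted by }|v-v'|^{\alpha+2}/|x-x'|^\alpha)$ is bounded below by a positive quantity times a negative power of $(t-t_0)$; (ii) use \textbf{(SF)} with a family of cutoffs $\phi$ (in the $v$-variable, to select pairs with $|v-v'|\geq\delta$) and a spatial localization to write, for $t$ slightly larger than $t_0$,
\begin{equation}\label{eq:lowerbound}
  D^\alpha[\mu_t] \;\gtrsim\; \frac{1}{(t-t_0)^\alpha}\,\bigl(\text{positive constant depending on the variance of }\sigma_{t_0,\cdot}\bigr) + o(1),
\end{equation}
using the narrow continuity to pass the bound from $\mu_{t_0}$ to $\mu_t$ and \textbf{(MP)} to keep $|x-x'|\lesssim (t-t_0)$ on the relevant set; (iii) integrate in $t$ over a small one-sided neighborhood of $t_0$ to get divergence, contradicting \eqref{main1-as1}. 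I expect the main obstacle to be step (ii): making the substitution \eqref{eq:heuristic} rigorous despite the fact that $\mu_t$ is \emph{not} the exact free-transport pushforward of $\mu_{t_0}$ (there is an alignment force perturbing it) and that we only have narrow — not, say, Wasserstein — continuity in \textbf{(SF)}. This requires carefully choosing the test functions $\phi$ so that narrow convergence suffices to transfer the lower bound on near-diagonal velocity-variance mass from the rewound configuration to $\mu_t$ itself, and controlling the spatial denominator $|x-x'|^\alpha$ from above uniformly on the support — which is where uniform compact support and \textbf{(MP)} (with its explicit $(t-t_0)\overbar{B(M)}$ enlargement) enter. A secondary subtlety is handling the possibility of vacuum and the measure-zero-in-$t$ exceptional set, which is why \textbf{(SF)} is phrased with the restriction to a full-measure set $A$; the argument must be run entirely within $A$.
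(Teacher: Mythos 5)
Your outline reproduces, in essence, only one of the two mechanisms the theorem needs. The contradiction you describe --- rewind $\mu_t$ by the free transport $T^{t_0,t}$, use a velocity cutoff to isolate two groups with $|v-v'|\geq\delta$ sitting at a common spatial location, invoke (SF) for narrow continuity of the rewound marginals and (MP) to keep their mass bounded below, and then integrate the resulting $(t-t_0)^{-\alpha}$ singularity in time --- is exactly the paper's treatment of the \emph{atomic} part of $\rho_{t_0}$ (and there your worry about the alignment force is not an issue: one rewinds $\mu_t$ itself, so the identity $|x-x'|_{\text{at time }t}=|(x-x')+(t-t_0)(v-v')|$ is exact after the change of variables, no approximation of $\mu_t$ by a free-transport pushforward of $\mu_{t_0}$ is needed). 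But this mechanism produces a genuine time singularity only when positive mass sits at a single spatial point: it is only then that the rewound pair measure charges $\{x_0\}\times B_1$ times $\{x_0\}\times B_2$ with mass bounded below, so that $|x-x'|\leq 2(t-t_0)M$ on a set of non-vanishing product mass.

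For the non-atomic (diffuse) part of $\rho_{t_0}$ your step (ii) degenerates: under $\rho_{t_0}\otimes\rho_{t_0}$ the diagonal $\{x=x'\}$ is null, the separation at time $t$ is dominated by the initial spatial separation rather than $(t-t_0)|v-v'|$, and the product mass of pairs with $|x-x'|\lesssim(t-t_0)$ tends to zero as $t\to t_0$ (for instance like $(t-t_0)^d$ for bounded densities), so the lower bound $D^\alpha[\mu_t]\gtrsim (t-t_0)^{-\alpha}\cdot(\text{const})$ is false and no divergence of the time integral follows. A telltale sign is that your argument only ever uses $\alpha\geq 1$, never $\alpha\geq d$; yet for $\alpha<d$ a smooth, compactly supported, non-monokinetic $\mu$ has finite dissipation, so any proof must use $\alpha\geq d$ somewhere, and it is precisely in the diffuse case that it enters. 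The paper handles that case by a purely spatial argument, independent of (MP) and (SF): split off the atoms, control the quantity $\mathcal{E}_\eta[\widetilde\mu_t]=\int |v-u(t,x)|^{\alpha+2}(|x-x'|+\eta)^{-\alpha}\,\dd[\widetilde\mu_t\otimes\widetilde\rho_t]$ by $D^\alpha$ via Jensen's inequality, rewrite it as a convolution with the normalized kernel $K_\eta$, and use the non-integrability of $|x|^{-\alpha}$ at the origin for $\alpha\geq d$ (with $\beta_\eta\to\infty$, resp.\ a dyadic-shell divergence when $\alpha=d$) together with the Lebesgue--Besicovitch differentiation theorem to force $\int|v-u(t,x)|^{\alpha+2}\dd\sigma_{t,x}(v)=0$ for $\widetilde\rho_t$-a.a.\ $x$. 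Your proposal contains no counterpart of this spatial differentiation step, so as written it proves monokineticity only at the atoms of $\rho_t$ and leaves the diffuse part unaddressed.
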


\begin{rem}\rm
    The quantity $D^\alpha[\mu_t]$ in \eqref{main1-as1} is designed to be reminiscent of similar quantities in \cite{JR-16}. From our perspective, the key observation is that it is upper-bounded by the enstrophy appearing in the weak formulation of \eqref{meso}, cf. Definition \ref{weakkinetic}.    
\end{rem}

\begin{rem}\rm
    The idea behind assumptions (MP) and (SF) is that they replace the BV time-regularity required in \cite{JR-16}. Intuitively, we require the flow to conserve mass (or dissipate it in a controlled manner). However the only concept of the flow that we can reasonably use is in the sense of the pushforward measure $T^{t_0,t}_\#\mu_t$, which transforms only the position, leaving velocity intact. It is deliberately so, since we cannot control whole characteristics, due to the insufficient regularity of the velocity field. 
\end{rem}

\subsection{Monokineticity and the Euler-alignment system}
Following the disintegration introduced in the previous section we define the local quantities related to $\mu$. These are
\begin{equation}\label{localquant}
    \rho_t(x) := \int_{\R^d_v} \dd\mu_t(x,  v), \quad u(t,x) = \int_{\R^d_v}v\dd\sigma_{t,x}(v),
\end{equation}
which we refer to as the local density and the local velocity, respectively. Here the integration happens only with respect to $v\in\R^d$, which we emphasise by having $\R^d_v$ as the domain of integration. Moreover if $\rho_t$ and $u$ are regular then they solve the system
\begin{align}\label{macro_1}
    \begin{cases}
    \partial_t \rho + \div_x (\rho u) = 0 \\
    \partial_t (\rho u) + \div_x (\rho u \otimes u) = \displaystyle\int_{\R^d} \psi(|x-y|)(u(y) - u(x)) \rho(y)\rho(x) \dd y + P,
     \end{cases}
\end{align}
where the pressure term $P=(p_{ij})$ is given by
$$ p_{ij}=-\div_x\left(\int_{\R^d_v} (v_i - u_i(t,x))(v_j-u_j(t,x)) \dd \mu_t(x,v)\right)\qquad \mbox{see e.g. \cite{HT-08, C-19}}. $$
Note that in \eqref{macro_1} everything except $P$ is hydrodynamical or macroscopic i.e. depends only on $t$ and $x$. There are many ways to close system \eqref{macro_1} on the hydrodynamical level, two of which are related to the so-called Maxwelian  and monokinetic ansatz (see, for instance, \cite{S-2021-Surv}). However, in light of Theorem \ref{main1}, monokineticity of $\mu$ is ensured and does not need to be assumed as an ansatz. In such a case any solution to \eqref{meso} which is monokinetic reduces to a solution of the fractional Euler-alignment system
\begin{align}\label{macro}
    \begin{cases}
    \partial_t \rho + \div_x (\rho u) = 0 \\
    \partial_t (\rho u) + \div_x (\rho u \otimes u) = \displaystyle\int_{\R^d} \psi(|x-y|)(u(y) - u(x)) \rho(y)\rho(x) \dd y.
     \end{cases}
\end{align}
It remains true even for weak measure-valued solutions as long as they are narrowly continuous and uniformly compactly supported. This leads to the second main result of the paper.

\begin{theo}\label{main2}
Let $\alpha\geq d$ and $\mu$ be a weak measure-valued solution to \eqref{meso} in the sense of Definition \ref{weakkinetic} (see preliminaries below). Then $\mu$ is monokinetic and its local quantities \eqref{localquant} satisfy the Euler-alignment system \eqref{macro} in the weak sense of Definition \ref{weakeuler} (see preliminaries below).
\end{theo}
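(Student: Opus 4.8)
The plan is to reduce Theorem~\ref{main2} to Theorem~\ref{main1}: one first checks that a weak measure-valued solution in the sense of Definition~\ref{weakkinetic} satisfies all the hypotheses of Theorem~\ref{main1}, which yields monokineticity, and then substitutes the monokinetic ansatz into the weak kinetic formulation to read off the Euler-alignment system~\eqref{macro}. By Definition~\ref{weakkinetic} the family $\{\mu_t\}_{t\in[0,T]}$ is uniformly compactly supported and narrowly continuous in $t$; denote by $M$ the uniform bound on the $v$-support. It remains to verify \eqref{main1-as1}, (MP) and (SF).

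The bound \eqref{main1-as1} is immediate: since $|v-v'|\leq 2M$ on the support, $D^\alpha[\mu_t]\leq (2M)^\alpha\int_{\{x\neq x'\}}\psi(|x-x'|)\,|v-v'|^2\dd[\mu_t\otimes\mu_t]$, and the time integral of the right-hand side is a fixed multiple of the enstrophy dissipation that must be finite for $F(\mu_t)\mu_t$ to be well-defined in Definition~\ref{weakkinetic}; this is exactly the observation recorded in the remark after Theorem~\ref{main1}. For (MP), I would test the kinetic equation against functions of $x$ alone: for these the singular term disappears (its pairing involves $\nabla_v\phi\equiv 0$), so $t\mapsto\int\phi(x)\dd\rho_t$ is absolutely continuous with $\frac{d}{dt}\int\phi\dd\rho_t=\int v\cdot\nabla_x\phi\dd\mu_t$. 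Applying this, for fixed $t_0$ and a closed set $C$, to the family $\phi_t(x)=\zeta\!\big((\operatorname{dist}(x,C)-M(t-t_0))/\epsilon\big)$ with $\zeta$ smooth nonincreasing, $\zeta\equiv 1$ on $(-\infty,0]$ and $\zeta\equiv 0$ on $[1,\infty)$ (mollified and cut off far away, which is harmless since all $\mu_t$ live in one fixed compact set), and using $v\cdot\nabla_x\phi_t\geq -\partial_t\phi_t$ pointwise (a consequence of $|v|\leq M$ and $|\nabla\operatorname{dist}(\cdot,C)|\leq 1$), one gets that $t\mapsto\int\phi_t\dd\rho_t$ is nondecreasing; passing to the limit $\epsilon\to0$ gives the finite-speed-of-propagation inequality $\rho_t(\overline{C}+(t-t_0)\overline{B(M)})\geq\rho_{t_0}(C)$, i.e.\ (MP).

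For (SF), the change-of-variables formula gives, for $g\in C_b(\R^d)$ and $0\leq\phi\in C_c^\infty(\R^d)$,
$$\int_{\R^d} g\dd\rho_{t_0,t}[\phi]=\int_{\R^{2d}}g\big(x-(t-t_0)v\big)\,\phi(v)\dd\mu_t(x,v).$$
Writing this as $\int (g(x-(t-t_0)v)-g(x))\phi(v)\dd\mu_t+\int g(x)\phi(v)\dd\mu_t$, the first term tends to $0$ as $t\to t_0$ because the $\mu_t$ are probability measures supported in a fixed compact set on which $g$ is uniformly continuous, and the second tends to $\int g(x)\phi(v)\dd\mu_{t_0}$ by the assumed narrow continuity of $t\mapsto\mu_t$ (note $g(x)\phi(v)\in C_b(\R^{2d})$). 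Hence $\rho_{t_0,t}[\phi]\to\rho_{t_0,t_0}[\phi]$ narrowly at every $t_0$, so (SF) holds with $A=[0,T]$. Theorem~\ref{main1} now applies and $\mu$ is monokinetic: $\mu_t=\delta_{u(t,x)}(v)\otimes\rho_t(x)$ for a.e.\ $t$ and $\rho_t$-a.e.\ $x$, with $u$ the local velocity of \eqref{localquant} and $|u|\leq M$.

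Finally, I substitute this ansatz into the weak formulation. Testing with $\phi(t,x)$ independent of $v$ (the singular term vanishes) yields directly the weak continuity equation $\partial_t\rho+\div_x(\rho u)=0$, using $\int v\dd\sigma_{t,x}=u$. Testing with $\phi(t,x)\cdot v$ — admissible because $\nabla_v(\phi\cdot v)=\phi(t,x)$ is Lipschitz in $(x,v)$, after multiplying by a $v$-cutoff equal to $1$ on the uniform support so that the test function is compactly supported — produces the terms $\frac{d}{dt}\int\phi\cdot(\rho u)$, $\int\partial_t\phi\cdot(\rho u)$, the transport term $\int\nabla_x\phi:\big(\int v\otimes v\dd\sigma_{t,x}\big)\dd\rho_t=\int\nabla_x\phi:(\rho\,u\otimes u)$ — where the pressure $P$ of \eqref{macro_1} vanishes identically precisely because $\sigma_{t,x}$ is a Dirac mass, so $\int v\otimes v\dd\sigma_{t,x}=u\otimes u$ — and the singular term, which after disintegrating both copies of $\mu_t$ and using $\int w\dd\sigma_{t,y}=u(t,y)$, $\int v\dd\sigma_{t,x}=u(t,x)$ collapses to $\int\!\int\psi(|x-y|)\,(u(t,y)-u(t,x))\cdot\phi(t,x)\,\rho_t(y)\rho_t(x)$. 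Collecting them gives the weak momentum equation of \eqref{macro}, after the routine check that the test functions of Definition~\ref{weakeuler} are exactly those produced by the lifts $\phi(t,x)$ and $\phi(t,x)\cdot v$. The genuinely hard content is carried by Theorem~\ref{main1}; within Theorem~\ref{main2} the step needing the most care is the first one — extracting the finite-speed-of-propagation property (MP) and transferring narrow continuity through the freezing map $T^{t_0,t}$ to obtain (SF) — together with the bookkeeping in the last step ensuring that the lifted test functions remain admissible and that the pressure term genuinely drops out.
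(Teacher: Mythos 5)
Your argument is correct and follows essentially the same route as the paper: verify (SF) from narrow continuity, (MP) by a finite-speed-of-propagation test (the paper proves it only for singletons in Proposition \ref{solMP}, which is all that Step 3 of Theorem \ref{main1} uses, while your distance-function test gives general $C$), obtain \eqref{main1-as1} from the enstrophy bound of Proposition \ref{relaxdef}, and then insert the monokinetic ansatz into \eqref{weakeqkinetic} exactly as in the derivation preceding Definition \ref{weakeuler}. The only point you leave implicit is item (ii) of Definition \ref{weakeuler}, the energy-dissipation inequality for $(\rho,u)$, which follows at once by rewriting the energy equality of Proposition \ref{relaxdef} through the monokinetic disintegration $\mu_t=\delta_{u(t,x)}(v)\otimes\rho_t(x)$.
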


\begin{rem}\rm
The above theorem should be partially seen as a nonexistence result. For example, one could ask what happens if the initial datum $\mu_0$ is drastically non-monokinetic. For $$\mu_0(x,v) = \frac{1}{2}\delta_0(x)\otimes(\delta_{v_1}(v)+\delta_{v_2}(v))$$ we have two particles of mass $1/2$ situated at $0$ with two distinct initial velocities $v_1\neq v_2$. In such a case we claim that due to the weak continuity of the solution these two particles separate, leading to an immediate blowup of the enstrophy, see Proposition \ref{relaxdef}. In conclusion, the solution does not exist. In particular, we do not claim that a solution initiated in non-monokinetic data instantly becomes monokinetic.
\end{rem}

\begin{rem}\rm
We emphasise that the monokineticity result in Theorem \ref{main1} is necessary for Theorem \ref{main2} and cannot be replaced by previous results such as \cite{JR-16}, due to the more complex singular term $F(\mu)$ in \eqref{meso}.    
\end{rem}

In the last decade, Eulerian alignment dynamics has been derived rigorously from mesoscopic CS-type models via hydrodynamical limits. In \cite{KMT-15}, {\sc Karper, Mellet and Trivisa} derived the Euler-alignment system from the non-symmetric Motsch-Tadmor interactions \cite{MT-11}. Then, for singular CS model the limit was performed by {\sc Poyato and Soler} in \cite{PS-17}, with the latest contribution, for a wide class of non-singular CS-type models -- by {\sc Figalli and Kang} in \cite{FK-19} . The key difference between these results and ours is that we  exploit the singularity of interactions to derive the Euler-alignment model without a need to take the limit. In fact, any solution to the strongly singular CS kinetic equation -- if it is weakly continuous and uniformly compactly supported -- is a weak solution to the fractional Euler-alignment system. 



\subsection{Mean-field limit from micro- to macroscopic CS model}\label{intro:mf}
The relation between monokinetic solutions to the kinetic equation \eqref{meso} and solutions to the Euler-alignment system \eqref{macro} serve as a basis for the following approach. Solutions to the kinetic CS equation can often be derived as a mean-field limit of solutions to the particle system \eqref{meso}, see  \cite{HL-09, HT-08, CCR-2011}. In the case of singular communication weight it was done in the weakly singular case $\alpha\in(0,\frac{1}{2})$ in \cite{MP-18} and using the so-called first order reduction in \cite{CZ-21} (1D case) and \cite{PP-22-1-arxiv, PP-22-arxiv} (modified multi-D case). Here an opportunity arises to perform a mean-field limit from microscopic CS system to monokinetic solutions of the kinetic equation in the strongly singular case $\alpha\in[d,2]$. Thus, by Theorem \ref{main2}, it actually constitutes a direct micro- to macroscopic mean-field limit. However, execution of such a strategy is delicate since, due to high singularity, it is difficult (or impossible) to ensure that the mean-field approximation has a suitably convergent subsequence. Here lies the crux of the issue: we do not aim for the kinetic equation \eqref{meso} but for the macroscopic Euler-alignment system \eqref{macro} (since by Theorem \ref{main2} they are expected to coincide anyway). The macroscopic system caries signifficantly less information than the kinetic one. It turns out that the mean-field approximation has a convergent subsequence, whose limit $\mu$ satisfies the assumptions of Theorem \ref{main1}, and thus it is monokinetic. Moreover the local quantities \eqref{localquant} of $\mu$ satisfy the Euler-alignment system \eqref{macro}, even if $\mu$ itself does not solve the kinetic equation \eqref{meso}. Thus, we arrived at the third and last main result.

\begin{theo}\label{main3}
    Let $T>0$, $\rho_0\in{\mathcal P}(\R^{d})$ be compactly supported and $u_0:\R^d_x \rightarrow \R^d$ belong to $L^\infty(\rho_0)$. Moreover assume $\alpha \in [d,2]$, $d\in\{1,2\}$.
    Then there exists a mean-field approximative sequence $\mu^N$ of weak solutions to \eqref{meso}, converging narrowly to a monokinetic measure $\mu$, which satisfies the following assertions.
    
    \begin{enumerate}[label=(\Alph*)]
        \item If $d=1$ and $\alpha\in[1,2)$, then the local quantities $(\rho, u)$ associated with $\mu$ (see \eqref{localquant}) constitute a weak solution to the Euler-alignment system \eqref{macro} in the sense of Definition \ref{weakeuler}.
        \item If $d\in\{1,2\}$ and $\alpha=2$, then the local quantities $(\rho, u)$ associated with $\mu$ (see \eqref{localquant}) constitute a weak solution to the Euler-alignment system \eqref{macro} in the sense of Definition \ref{weakeuler}, provided that the local density $\rho$ is non-atomic for a.a. times $t\in[0,T]$.
    \end{enumerate}

\end{theo}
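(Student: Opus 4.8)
The plan is to realise $\mu$ as a narrow limit of empirical measures of the particle system \eqref{micro} and then to check that this limit satisfies the hypotheses of Theorem \ref{main1}. Since $\alpha\geq d\geq 1$, the system \eqref{micro} is globally well posed and collision-free as soon as the initial positions are pairwise distinct (cf.\ \cite{P-14, CCMP-17}). I would therefore begin by quantising $\mu_0:=\delta_{u_0(x)}(v)\otimes\rho_0(x)$: as $\mu_0$ is a compactly supported probability measure on $\R^{2d}$ there are atomic measures $\mu_0^N=\tfrac1N\sum_{i=1}^N\delta_{(x_{i0}^N,v_{i0}^N)}$ with $\mu_0^N\to\mu_0$ narrowly and $\tfrac1N\sum_i|v_{i0}^N|^2\to\int|u_0|^2\,\dd\rho_0$, and an $o(1)$ perturbation of the $x_{i0}^N$ makes them distinct without affecting either convergence. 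Solving \eqref{micro} produces trajectories in $C^1([0,T])$, and a direct computation — integrating $\tfrac{d}{dt}\tfrac1N\sum_i\phi(t,x_i^N,v_i^N)$ in time and inserting the equation for $\dot v_i^N$ — shows that the empirical measures $\mu_t^N=\tfrac1N\sum_i\delta_{(x_i^N(t),v_i^N(t))}$ are weak measure-valued solutions of \eqref{meso} in the sense of Definition \ref{weakkinetic}, the resulting off-diagonal sum matching the singular term because $x_i^N=x_j^N\Leftrightarrow i=j$.

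Next I would gather the uniform bounds. The $v$-support of $\mu_t^N$ lies in the nonexpanding convex hull of $\{v_{i0}^N\}$, hence in a fixed ball $\overbar{B(M)}$ with $M\lesssim\|u_0\|_{L^\infty(\rho_0)}$; together with $|\dot x_i^N|\leq M$ this confines the support of $\mu_t^N$ to a fixed compact set for all $t\in[0,T]$ and all $N$. Dissipation of kinetic energy gives $\int_0^T\tfrac1{N^2}\sum_{i\neq j}\psi(|x_i^N-x_j^N|)|v_i^N-v_j^N|^2\,\dd t\leq \|u_0\|_{L^\infty(\rho_0)}^2$, and since $|v_i^N-v_j^N|\leq 2M$ this upgrades to $\int_0^T D^\alpha[\mu_t^N]\,\dd t\leq (2M)^\alpha\|u_0\|_{L^\infty(\rho_0)}^2$, i.e.\ \eqref{main1-as1} holds uniformly in $N$. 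For compactness in time, fix a countable family $\{\Phi_k\}\subset C_c^\infty(\R^{2d})$ dense in $C_0(\R^{2d})$: each $t\mapsto\langle\mu_t^N,\Phi_k\rangle$ is absolutely continuous, and symmetrising the singular term in its derivative and bounding $\tfrac1{N^2}\sum_{i\neq j}|v_i^N-v_j^N|\,|x_i^N-x_j^N|^{1-\alpha}$ by Cauchy--Schwarz against the dissipation rate (here $\alpha\leq 2$ makes $|x_i^N-x_j^N|^{1-\alpha/2}$ bounded on the common compact support) shows that this derivative is bounded in $L^1(0,T)$ uniformly in $N$. Hence $\{\langle\mu_\cdot^N,\Phi_k\rangle\}_N$ is bounded in $BV(0,T)$; Helly's selection theorem together with a diagonal extraction yields a subsequence along which $\langle\mu_t^N,\Phi_k\rangle$ converges for every $t$ and every $k$, and by uniform tightness $\mu_t^N\rightharpoonup\mu_t$ for every $t\in[0,T]$. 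Setting $\mu:=\mu_t\otimes\lambda^1(t)$ gives $\mu^N\rightharpoonup\mu$ on $[0,T]\times\R^{2d}$, and each $t\mapsto\langle\mu_t,\Phi_k\rangle$ inherits the $BV$ bound, hence is continuous off a countable set.

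I would then verify the hypotheses of Theorem \ref{main1}. Uniform compact support is immediate. Property (MP) comes from finite speed of propagation: $\rho_{t_0}^N(C)\leq\rho_t^N(\overbar C+(t-t_0)\overbar{B(M)})$ for every $C$ and every $N$, and the portmanteau theorem (open sets from below, the closed set $\overbar C+(t-t_0)\overbar{B(M)}$ from above, then exhausting $C$ by open neighbourhoods of $\overbar C$) transfers this to $\mu$. Property (SF) reduces, via $\int g\,\dd\rho_{t_0,t}[\phi]=\int g(x-(t-t_0)v)\phi(v)\,\dd\mu_t$ and $|g(x-(t-t_0)v)-g(x)|\leq M\|\nabla g\|_\infty|t-t_0|$, to continuity of $t\mapsto\langle\mu_t,g(x)\phi(v)\rangle$; taking $A$ to be the full-measure set of common continuity points of the functions $t\mapsto\langle\mu_t,\Phi_k\rangle$ and approximating uniformly on the common compact support gives (SF). Finally \eqref{main1-as1} passes to $\mu$ by lower semicontinuity of $\mu\mapsto\int_0^T D^\alpha[\mu_t]\,\dd t$ under narrow convergence, the integrand being nonnegative and lower semicontinuous. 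Theorem \ref{main1} then shows $\mu=\delta_{u(t,x)}(v)\otimes\rho_t(x)\otimes\lambda^1(t)$ is monokinetic, with $(\rho,u)$ as in \eqref{localquant}; moreover $\int_0^T\iint\psi(|x-y|)|u(t,x)-u(t,y)|^2\,\dd\rho_t\,\dd\rho_t\,\dd t<\infty$ by the dissipation bound and monokineticity, $u$ is bounded, $t\mapsto\rho_t$ is narrowly Lipschitz, and $t\mapsto\rho_t u(t,\cdot)$ is narrowly $\tfrac12$-Hölder (the last two again from the energy estimates), which furnish the regularity required by Definition \ref{weakeuler}.

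It remains to pass to the limit in the weak formulation of \eqref{macro}. From \eqref{micro}, the densities $\rho_t^N=\tfrac1N\sum_i\delta_{x_i^N}$ and momenta $m_t^N=\tfrac1N\sum_i v_i^N\delta_{x_i^N}$ satisfy $\partial_t\rho^N+\div_x m^N=0$ and $\partial_t m^N+\div_x\big(\tfrac1N\sum_i v_i^N\otimes v_i^N\,\delta_{x_i^N}\big)=\tfrac1{N^2}\sum_{i\neq j}\psi(|x_i^N-x_j^N|)(v_j^N-v_i^N)\,\delta_{x_i^N}$ weakly. On the left-hand side, $\rho_t^N\rightharpoonup\rho_t$, $m_t^N\rightharpoonup\rho_t u(t,\cdot)$, and — crucially, by monokineticity — $\tfrac1N\sum_i v_i^N\otimes v_i^N\,\delta_{x_i^N}\rightharpoonup\int v\otimes v\,\dd\mu_t=\rho_t\,u(t,\cdot)\otimes u(t,\cdot)$, so the pressure term vanishes in the limit. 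For the singular right-hand side, testing against a smooth compactly supported $\Psi$ and symmetrising gives $\tfrac1{2N^2}\sum_{i\neq j}\psi(|x_i^N-x_j^N|)(v_j^N-v_i^N)\cdot(\Psi(x_i^N)-\Psi(x_j^N))$, with integrand $\lesssim\|\nabla\Psi\|_\infty|v_i^N-v_j^N|\,|x_i^N-x_j^N|^{1-\alpha}$; splitting at a scale $\epsilon$, the far part converges by narrow convergence of $\mu_t^N\otimes\mu_t^N$ against a smooth cutoff of the kernel, while the near-diagonal part, controlled by Cauchy--Schwarz against the dissipation bound, is $\lesssim\epsilon^{(2-\alpha)/2}$ uniformly in $N$ when $\alpha<2$ (case (A)), and $\lesssim\big(\int_0^T\iint_{|x-y|\leq\epsilon}\dd\rho_t^N\,\dd\rho_t^N\,\dd t\big)^{1/2}$ when $\alpha=2$, which tends to $0$ as first $N\to\infty$ (portmanteau) and then $\epsilon\to0$ precisely because $\rho_t$ is non-atomic (case (B)); the limiting alignment term is well defined thanks to the finiteness recorded above. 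Collecting these limits, $(\rho,u)$ solves \eqref{macro} in the sense of Definition \ref{weakeuler} — assertion (A) for $d=1$, $\alpha\in[1,2)$, and assertion (B) for $\alpha=2$. The hard part is exactly this passage to the limit in the strongly singular term: controlling its near-diagonal contribution is what forces $\alpha\leq 2$ (so that $|x-y|^{2-\alpha}$ is bounded on a compact set) and, at the threshold $\alpha=2$, forces the non-atomicity of $\rho_t$; the same estimate is what places $\tfrac{d}{dt}\langle\mu_t^N,\Phi\rangle$ in $L^1_t$ uniformly, underpinning the $BV$-in-time compactness and property (SF). A secondary, more routine point is the initial quantisation, which must simultaneously achieve narrow convergence of $\mu_0^N$, convergence of kinetic energies, and distinctness of positions.
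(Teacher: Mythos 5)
Your proposal is correct and follows the paper's overall architecture (quantise the initial data, solve \eqref{micro}, extract a limit, verify (MP), (SF) and \eqref{main1-as1}, invoke Theorem \ref{main1}, then pass to the limit in the momentum equation with the same near-diagonal/far-field splitting, with $\alpha<2$ handled by $m^{(2-\alpha)/2}$ and $\alpha=2$ by non-atomicity via portmanteau), but the technical core is genuinely different. Where the paper only extracts narrow convergence of the space--time measure (Banach--Alaoglu), proves the fibered product convergence $[\mu_t^N\otimes\mu_t^N]\otimes\lambda^1\rightharpoonup[\mu_t\otimes\mu_t]\otimes\lambda^1$ through a separate lemma (Proposition \ref{lem:mutimesmu}, via a $W^{1,1}$-in-time bound, compact embedding into $L^2([0,T])$ and a diagonal argument), and then spends the long four-step Proposition \ref{lem:asump} on (SF) (moving test functions $\xi(x-(t-t_0)v)\phi(v)$, energy-modulus estimates, density in $t_0$), you use the same $L^1_t$ bound on $\tfrac{d}{dt}\langle\mu_t^N,\Phi_k\rangle$ (this is exactly where $\alpha\le 2$ enters, in both proofs) but feed it into Helly's selection theorem: this gives a subsequence with $\mu_t^N\rightharpoonup\mu_t$ for \emph{every} $t$, which makes the product convergence immediate ($\mu_t^N\otimes\mu_t^N\rightharpoonup\mu_t\otimes\mu_t$ pointwise in $t$ plus dominated convergence) and reduces (SF) to the observation that the BV limits $t\mapsto\langle\mu_t,\Phi_k\rangle$ are continuous off a countable set, combined with the elementary estimate $|g(x-(t-t_0)v)-g(x)|\le M\|\nabla g\|_\infty|t-t_0|$. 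Your pointwise-in-$t$ compactness also simplifies the energy convergence (everywhere rather than on $A_E$) and your quantisation of the joint measure $\delta_{u_0(x)}(v)\otimes\rho_0(x)$ on $\R^{2d}$ sidesteps the paper's Lusin/Tietze construction needed because $u_0$ is only $\rho_0$-a.e.\ defined; what the paper's route buys in exchange is that its (SF) argument does not presuppose any pointwise-in-time convergence and is phrased entirely through the weak formulation, which is closer in spirit to the abstract hypotheses of Theorem \ref{main1}. Two small items are asserted rather than derived, but both follow from estimates you already have: the narrow Lipschitz continuity of $t\mapsto\rho_t$ (pass the uniform bound $d_{BL}(\rho^N_{t_1},\rho^N_{t_2})\le 2M|t_1-t_2|$ to the limit, as in Proposition \ref{compactness}), and condition (ii) of Definition \ref{weakeuler}, which requires the quantitative inequality $\int_0^t D[\mu_s]\dd s\le E[\mu_0]-E[\mu_t]$ rather than mere finiteness; it follows from the particle energy equality, lower semicontinuity of $D$, your everywhere convergence of energies, the energy convergence at $t=0$ built into your quantisation, and monokineticity to rewrite $E[\mu_t]$ and $D[\mu_t]$ in terms of $(\rho,u)$.
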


\begin{rem}[Necessity of assumptions and possible generalisations]\rm
    Theorem \ref{main3} introduces the assumption that $\alpha\leq 2$. It is related to the well-known quantity $$\int_0^T\int_{\R^{4d}}|x-x'|^{2-\alpha}\dd[\mu_t\otimes\mu_t]\dd t$$
    that emerges frequently in our calculations, which for $\alpha\leq 2$ is automatically manageable. Interestingly, its microscopic equivalent is known to be bounded, depending on initial data, even for $\alpha> 2$, see \cite{CCMP-17}. Thus it may be possible to generalise our results for $\alpha$ and $d$ greater than $2$.
\end{rem}



To the best of our knowledge, there are no prior results on existence of weak (or strong) solutions to the fractional Euler-alignment system with general initial data admitting vacuum in Euclidean space (either in 1D or multi-D case). The micro- to macroscopic mean-field limit and the monokineticity on the mesoscopic level was tackled recently by {\sc Carrillo and Choi} \cite{CC-21}, where the authors prove that if a strong solution to the Euler-alignment system with regular communication exists then it can be obtained by a direct micro- to macroscopic mean-field limit, which they support by providing also small data existence in such a scenario. The difference is that our mean-field limit operates with lesser regularity and does not require any prior knowledge on existence for the Euler-alignment system (in fact we use the mean-field limit to prove existence).

Let us fit our results into the larger landscape of the Euler-alignment system. In \cite{tan_euler-alignment_2020-1, HT-17, CCTT-16, TT-14} the issue of well-posedness for the Euler-alignment system with smooth communication was tackled using the critical threshold framework reminiscent of the works on the porous medium \cite{HZ-16} and quasi-geostrophic \cite{BD-03} equation. In the case of strongly singular fractional Euler-alignment system, a similar approach resulted in existence of strong solutions with initial data bounded away from vacuum in the 1D torus \cite{DKRT-18, ST-17, ST-18, STII-17}. Other directions include well-posedness for small initial data (or local-in-time) in multi-D torus \cite{S-19, C-19, CTT-2021} and the multi-D Euclidean space \cite{DMPW-19}. The issue of well-posendess of the fractional Euler-alignment system in multi-D with general initial data is mostly open. This and more can be found in the surveys  \cite{CHZ-17, MMPZ-19}. More recent directions of research include the study of singularity formation for the Euler-alignment system \cite{AC-21},  multi-D alignment dynamics with unidirectional velocities \cite{SL-2022, LLTS-2022} and models with various alternative types of communication such as topological \cite{ST-2020} and density-induced \cite{MMP-20} interactions, as well as interactions enhanced by anticipation \cite{ST-21}. Some of the recent developments can be found in the surveys \cite{ABFHKPPS-2019-Surv, S-2021-Surv}.






The remainder of the paper is organised as follows. In Section \ref{sec:prelim} we introduce the necessary preliminaries, including information on the particle system \eqref{micro} and weak formulations. Section \ref{sec:monokin} is dedicated to the proof of Theorem \ref{main1} and Theorem \ref{main2}. In Section \ref{sec:compact} we prove a handful of results on compactness related to the mean-field limit and in Section \ref{sec:mf} we prove that the mean-field limit is monokinetic and satisfies the Euler-alignment system, thus proving our final result, Theorem \ref{main3}. Some more tedious or simple lemmas and proofs are relegated to the appendix.

\subsection*{Notation}
Throughout the paper, by ${\mathcal P}(\R^d)$ we denote the space of probability measures  on $\R^d$ and by ${\mathcal M}(\R^d)$ and ${\mathcal M}_+(\R^d)$ -- Radon signed  and nonnegative measures on $\R^d$, respectively. We use two main topologies on measures. First, for a real-valued, possibly signed (or, with some adjustment, vector-valued) measure $\mu$, $||\mu||_{TV}$ denotes the total variation norm of $\mu$, inducing the strong topology. Second, we use the weak topology. Since all of the measures considered throughout the paper are (uniformly) compactly supported, many weak topologies on Radon measures coincide. We will mainly use the bounded-Lipschitz distance $d_{BL}$ (see Definition \ref{bddLip}). For compactly supported measures it induces the narrow topology (with convergence tested by bounded-continuous functions), as well as the weak-* topology in the sense of functional analysis. 
We will denote all of the above convergences as 
$$\mu_n\rightharpoonup\mu $$
and refer to it as narrow convergence.

For a Borel-measurable mapping $F:\R^d \rightarrow \R^n$ and a Radon measure $\mu$ we define the \textit{pushforward measure} $F_{\#}\mu$ by
    \begin{equation*}
    F_{\#}(\mu)(B):= \mu\left( F^{-1}(B) \right) \quad \text{for Borel sets } B.
    \end{equation*}
    The main property of the pushforward measure,  used throughout the paper, is the change of variables formula
    \begin{equation}\label{changeofvariables}
        \int_{\R^n} g \dd (F_{\#}\mu) = \int_{\R^d} g \circ F \dd \mu,
    \end{equation}
    whenever $g\circ F$ is $\mu$-integrable.

For any Radon measure $\rho$, by $L^p(\rho)$ we define the space of all $\rho$-measurable functions $f$, defined on the support of $\rho$, with $\rho$-integrable $|f|^p$ and the standard generalization as $p=\infty$. By $C(\R^n; \R^m)$ we denote the space of continuous functions on $\R^n$ with values in $\R^m$; we use a similar notation for other classical spaces -- $L^p$, $C_b$ (bounded continuous functions), $C_c^\infty$ (smooth compactly supported functions), $C^k$ (functions with up to $k$th continuous derivative); we abbreviate $C(\R^n)$ if $\R^m = \R$.

Finally we adopt a handful of notational conventions. We write  $\phi'$ if the  function $\phi$ depends on an alternative set of arguments $x'$ and/or $v'$, for example for a given $\phi=\phi(x,v)$ one would have $\phi'= \phi(x',v')$. In order to clarify the variable over which we integrate we will sometimes write  $\R^d_x$ and $\R^d_v$ with natural conclusion that $x\in \R^d_x$ and $v\in \R^d_v$. We do it for example in \eqref{localquant}.
   For $a,b \in \R$ we write $a \lesssim b$ if there exists a positive constant  $C\in\R$, independent of relevant parameters, such that $a\le C\ b$. Similarly, we write $a\approx b$ whenever two positive constants $C_1$, $C_2$ exist such that $C_1\ b \le a \le C_2\  b$. 
    
Whenever we use the notation ''a.e.'' without specifying the measure -- we always mean almost everywhere with respect to the 1D Lebesgue measure.

\section{Preliminaries}\label{sec:prelim}



In this section we introduce the main mathematical tools utilised in the paper, including the information on the CS particle system and the weak formulations for the meso- and macroscopic system. We start by discussing some properties of the microscopic, singular CS model.

\subsection{Microscopic Cucker-Smale particle system}\label{sec:micro}
The question of well-posedness for the strongly singular CS particle system \eqref{micro} has been studied and answered in \cite{CCMP-17}. This issue being crucial from the
point of view of our further consideration, we present an extract of some necessary information.
\begin{theo}[\cite{CCMP-17}]\label{exist_micro}
Let $d \ge 1$ and $\alpha \ge 1$. Suppose that initial data is non-collisional, i.e. it satisfies 
\begin{equation*}
    x_{i0}\neq x_{j0} \quad \text{for } 1\le i \neq j \le N.
\end{equation*}
Then system \eqref{micro} admits a unique smooth solution. Moreover, the trajectories of this solution remain non-collisional, i.e.
\begin{equation*}
    x_i(t) \neq x_j(t) \quad \text{for } 1\le i \neq j \le N, \quad t\ge 0.
\end{equation*}
\end{theo}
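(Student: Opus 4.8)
This statement is the well-posedness theorem of \cite{CCMP-17}; the plan is to reconstruct its proof along the following lines. Away from the collision set $\Delta:=\{(x,v)\in\R^{2dN}:x_i=x_j\text{ for some }i\ne j\}$ the vector field on the right of \eqref{micro} is $C^\infty$ and locally Lipschitz, since $s\mapsto s^{-\alpha}$ is smooth on $(0,\infty)$. Hence, for non-collisional data, the Cauchy--Lipschitz theorem produces a unique maximal smooth solution on an interval $[0,T_{\max})$ which, if $T_{\max}<\infty$, must leave every compact subset of $\R^{2dN}\setminus\Delta$ as $t\to T_{\max}^-$ — that is, either $\max_i|v_i(t)|\to\infty$ or $\min_{i\ne j}|x_i(t)-x_j(t)|\to 0$. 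I would first dispose of velocity blow-up by the alignment maximum principle: at a.e.\ time, with $i_0$ attaining $\max_i|v_i|$, one has $\frac{d}{dt}\tfrac12|v_{i_0}|^2=\frac1N\sum_{k\ne i_0}\psi(|x_{i_0}-x_k|)\bigl(v_k\cdot v_{i_0}-|v_{i_0}|^2\bigr)\le 0$, so $\max_i|v_i(t)|\le M_0:=\max_i|v_{i0}|$ for all $t<T_{\max}$; consequently positions stay in a fixed compact set on $[0,T_{\max}]$, and only a collision can terminate the solution.

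\emph{Collision avoidance, the key step.} Suppose $T_{\max}<\infty$; then a collision occurs at $T_{\max}$ and, by compactness, some cluster $\mathcal C$ of indices has positions converging to one point as $t\to T_{\max}^-$. Pick $i,j\in\mathcal C$ and set $d(t):=|x_i(t)-x_j(t)|\to0$, $w(t):=|v_i(t)-v_j(t)|$. Subtracting the two velocity equations and splitting off their most singular pieces (the $k=j$ term in $\dot v_i$ and the $k=i$ term in $\dot v_j$) gives $\frac{d}{dt}(v_i-v_j)=-\frac2N\psi(d)(v_i-v_j)+R$, hence $\dot w\le-\frac2N\psi(d)\,w+|R|$. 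If $\mathcal C=\{i,j\}$ then $R$ involves only distances to particles outside $\mathcal C$, which stay bounded below on $[0,T_{\max}]$, so $|R|\le C$; integrating and using $w\ge0$ yields $\frac2N\int_0^{T_{\max}}\psi(d)\,w\,\dd s\le w(0)+CT_{\max}<\infty$. But $|\dot d|\le w$ pointwise and $\psi(d)|\dot d|=\bigl|\frac{d}{dt}\Phi(d)\bigr|$ with $\Phi$ a primitive of $\psi$, so
\[
\int_0^{T_{\max}}\psi(d)\,w\,\dd s\ \ge\ \int_0^{T_{\max}}\Bigl|\tfrac{d}{dt}\Phi(d(s))\Bigr|\,\dd s\ \ge\ \lim_{t\to T_{\max}^-}\bigl|\Phi(d(t))-\Phi(d(0))\bigr|\ =\ +\infty,
\]
since $\Phi(s)=\ln s$ (for $\alpha=1$) or $\Phi(s)=\frac{s^{1-\alpha}}{1-\alpha}$ (for $\alpha>1$) blows up to $-\infty$ as $s\to0^+$ — precisely the regime $\alpha\ge1$. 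This contradiction rules out two-particle clusters.

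\emph{Larger clusters and conclusion.} I expect the main obstacle to be clusters $\mathcal C$ with $|\mathcal C|\ge3$: then the remainder $R$ for a pair $i,j\in\mathcal C$ picks up extra singular terms from the other members of $\mathcal C$, and $|R|\le C$ is no longer clear. This is exactly the point resolved in \cite{CCMP-17}, and I would follow them: replace single-pair bookkeeping by a cluster quantity — e.g.\ track the cluster diameter $\delta(t):=\max_{i,j\in\mathcal C}|x_i-x_j|$ together with the velocity fluctuation $\sum_{i,j\in\mathcal C}|v_i-v_j|$, observing that each intra-cluster singular interaction \emph{damps} the fluctuation while contributing at most $O(\psi(\delta))$ to $|\dot\delta|$, so that the same non-integrability $\int_0^1\psi(s)\,\dd s=+\infty$ forbids $\delta(t)\to0$. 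Once collisions are excluded for clusters of every size, $T_{\max}=\infty$, the trajectories remain in $\R^{2dN}\setminus\Delta$ for all time, and the local Cauchy--Lipschitz uniqueness globalises, producing the unique, global, smooth, non-collisional solution asserted in the theorem.
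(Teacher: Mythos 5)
The paper does not prove this statement at all: Theorem \ref{exist_micro} is quoted verbatim from \cite{CCMP-17}, so there is no in-paper proof to compare against, and your proposal has to be judged as a reconstruction of the cited result. Your first two steps are sound and standard: local well-posedness by Cauchy--Lipschitz off the collision set, the maximum principle giving $\max_i|v_i(t)|\le\max_i|v_{i0}|$, and hence the dichotomy that only a collision can stop the solution. Your two-particle argument is also the right one: boundedness of velocities makes positions uniformly Lipschitz, so each particle has a limit position at $T_{\max}$ and collision clusters are well defined; the splitting $\frac{d}{dt}(v_i-v_j)=-\frac2N\psi(d)(v_i-v_j)+R$ with $|R|\le C$ (distances to outside particles are bounded below on $[0,T_{\max})$ by compactness plus distinct limit points), the integrated bound on $\int\psi(d)w$, and the non-integrability of $\psi$ at $0$ for $\alpha\ge1$ via $\Phi(d)$ reproduce the known pairwise collision-avoidance mechanism.

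The genuine gap is exactly where you flag it: clusters with three or more particles, which is the main content of \cite{CCMP-17}. Your one-sentence sketch is not yet an argument, and as stated it is not even dimensionally coherent: the cluster diameter satisfies $|\dot\delta|\lesssim$ (velocity fluctuation), not $|\dot\delta|=O(\psi(\delta))$, and the ``damping'' of the fluctuation by intra-cluster interactions is not term-by-term — a singular term $\psi(|x_i-x_k|)(v_k-v_i)$ with $i,k\in\mathcal C$ can push an individual difference $v_i-v_j$ the wrong way. What makes the cluster case work in \cite{CCMP-17} is a symmetrized, weighted bookkeeping: one introduces cluster norms $\|x\|_{\mathcal C}$, $\|v\|_{\mathcal C}$ (sums over all pairs inside the prospective collision set), shows by the antisymmetry of the alignment forces that the intra-cluster singular terms are collectively dissipative for $\|v\|_{\mathcal C}$ while external forces contribute a bounded remainder, and then closes a system of differential inequalities of the type $\frac{d}{dt}\|x\|_{\mathcal C}\ge-\|v\|_{\mathcal C}$, $\frac{d}{dt}\|v\|_{\mathcal C}\le-c\,\psi(C\|x\|_{\mathcal C})\|v\|_{\mathcal C}+C$, to which the same non-integrability of $\psi$ applies. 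Without carrying out this step (or an equivalent one), the proof covers only binary collisions, so the theorem as stated is not yet established by your argument; the rest of your outline (globalization of uniqueness once collisions are excluded) is fine.
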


\noindent
We need additional two properties of the CS system, related to the propagation of the maximal velocity and position and the energy equality. Both of these results are classical (since by Theorem \ref{exist_micro} the trajectories are smooth) and are well-known in the literature cf. \cite{P-14} or \cite{CCH-14}.

\begin{prop}[Propagation of position and velocity]\label{flock}
Let $(x_i(t), v_i(t))_{i=1}^N$ be a solution to \eqref{micro} in $[0,T]$. Then we have 
\begin{equation*}
    \sup_{t\geq 0}\sup\limits_{1\le i \le N} |v_i(t)| \le \sup\limits_{1\le i \le N} |v_{i0}|, \quad 
    \sup_{t\geq 0}\sup\limits_{1\le i \le N} |x_i(t)| \le \sup\limits_{1\le i \le N} |x_{i_0}| + t\sup\limits_{1\le i \le N} |v_{i0}|.
\end{equation*}
\end{prop}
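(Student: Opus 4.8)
\textbf{Proof plan for Proposition \ref{flock} (propagation of position and velocity).}

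The plan is to derive both bounds from the structure of \eqref{micro}, exploiting that the alignment force is a convex combination of velocity differences. First I would control the velocities. Let $V(t) := \max_{1\le i\le N} |v_i(t)|$. Since by Theorem \ref{exist_micro} the trajectories are smooth and non-collisional, the function $V$ is locally Lipschitz, hence differentiable for a.a.\ $t$, and at a point of differentiability where the maximum is attained by some index $i$ one computes
\begin{equation*}
    \frac{1}{2}\frac{\dd}{\dd t} |v_i(t)|^2 = v_i \cdot \dot v_i = \frac{1}{N}\sum_{j\neq i} \psi(|x_i - x_j|)\, v_i\cdot(v_j - v_i) = \frac{1}{N}\sum_{j\neq i} \psi(|x_i - x_j|)\big( v_i\cdot v_j - |v_i|^2\big).
\end{equation*}
Because $v_i\cdot v_j \le |v_i||v_j| \le |v_i|^2$ at the index realising the maximum, each summand is nonpositive, and since $\psi \ge 0$ we get $\frac{\dd}{\dd t}|v_i(t)|^2 \le 0$; a standard Rademacher/Danskin argument then yields that $V(t)$ is nonincreasing, giving $\sup_{t\ge 0} V(t) \le V(0) = \max_i |v_{i0}|$. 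I would phrase this cleanly either via the differential inequality for $t\mapsto\max_i |v_i(t)|^2$ or, alternatively, by noting that $\dot v_i$ keeps each $v_i$ inside the convex hull of $\{v_j\}_j$, so the smallest ball centered at $0$ containing all velocities cannot grow.

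Given the velocity bound, the position bound is immediate: for each $i$, $|x_i(t)| \le |x_{i0}| + \int_0^t |v_i(s)|\,\dd s \le |x_{i0}| + t\sup_{s\ge 0}|v_i(s)| \le |x_{i0}| + t\max_j|v_{j0}|$, and taking the maximum over $i$ on the left (bounding each term by the supremum over $i$ on the right) gives the stated estimate
\begin{equation*}
    \sup_{t\ge 0}\,\max_{1\le i\le N}|x_i(t)| \le \max_{1\le i\le N}|x_{i0}| + t\max_{1\le i\le N}|v_{i0}|.
\end{equation*}

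There is no real obstacle here; the only mildly delicate point is the non-smoothness of the maximum, which is handled by the classical observation that a maximum of finitely many smooth functions is locally Lipschitz and its a.e.\ derivative is bounded by the derivative of whichever function attains the maximum. Since the whole statement is asserted in the excerpt to be classical and well known (with references to \cite{P-14, CCH-14}), I would keep the argument to these few lines rather than belabour the envelope-theorem technicalities.
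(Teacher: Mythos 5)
Your argument is correct: the maximum-principle computation at the index realising $\max_i|v_i|$ (equivalently, the convex-hull invariance of the velocities) gives the decay of the maximal speed, and integrating the velocity bound gives the position bound. The paper does not prove this proposition at all — it invokes it as classical, citing \cite{P-14} and \cite{CCH-14} — and the argument you give is precisely the standard one from that literature, so there is nothing to reconcile.
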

\noindent The above implies the existence of a positive constant $M$ such that

\begin{equation}\label{boundM}
    \sup_{t\geq 0}\sup\limits_{1\le i \le N} |v_i(t)| < M,\quad \sup_{t\geq 0}\sup\limits_{1\le i \le N} |x_i(t)|< (T+1)M.
\end{equation}

\begin{prop}[Energy equality]\label{eneq}
 Let $(x_i(t), v_i(t))_{i=1}^N$ be a solution to \eqref{micro} in $[0,T]$. Then for all $t\in[0,T]$, we have
 \begin{equation*}
     \frac{1}{N^2}\int_0^t\sum_{i\neq j = 1}^N|v_i(s)-v_j(s)|^2|x_i(s)-x_j(s)|^{-\alpha} \dd s = \frac{1}{N}\sum_{i=1}^N|v_i(0)|^2 - \frac{1}{N}\sum_{i=1}^N|v_i(t)|^2 .
 \end{equation*}
\end{prop}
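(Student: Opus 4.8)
The plan is to differentiate the kinetic energy $E(s):=\frac1N\sum_{i=1}^N|v_i(s)|^2$ and, after symmetrising the resulting double sum in the particle indices, to recognise its time derivative as minus the enstrophy integrand appearing on the left-hand side. Integrating in time from $0$ to $t$ then yields the stated identity.

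First I would record the regularity that makes every manipulation legitimate. The solutions considered here are the smooth, non-collisional solutions furnished by Theorem \ref{exist_micro}; in particular $\delta:=\min_{i\neq j}\inf_{s\in[0,T]}|x_i(s)-x_j(s)|>0$, so each map $s\mapsto\psi(|x_i(s)-x_j(s)|)=|x_i(s)-x_j(s)|^{-\alpha}$ is continuous and bounded on $[0,T]$. Hence the sums below are finite sums of smooth functions, $E$ is $C^1$ on $[0,T]$, and differentiation is unproblematic.

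Next I would compute, using the velocity equation in \eqref{micro},
\begin{equation*}
    \frac{\dd}{\dd s}E(s)=\frac2N\sum_{i=1}^N v_i\cdot\dot v_i=\frac{2}{N^2}\sum_{i\neq j=1}^N\psi(|x_i-x_j|)\,v_i\cdot(v_j-v_i).
\end{equation*}
Relabelling $i\leftrightarrow j$ (note $\psi(|x_i-x_j|)$ is symmetric in the pair) gives the same quantity with $v_i$ replaced by $v_j$ and $(v_j-v_i)$ by $(v_i-v_j)$; averaging the two expressions and using $v_i\cdot(v_j-v_i)+v_j\cdot(v_i-v_j)=-|v_i-v_j|^2$ we obtain
\begin{equation*}
    \frac{\dd}{\dd s}E(s)=-\frac{1}{N^2}\sum_{i\neq j=1}^N\psi(|x_i-x_j|)\,|v_i-v_j|^2=-\frac{1}{N^2}\sum_{i\neq j=1}^N|v_i(s)-v_j(s)|^2\,|x_i(s)-x_j(s)|^{-\alpha}.
\end{equation*}
Integrating this identity over $[0,t]$ and rearranging yields exactly the energy equality.

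I do not expect a genuine obstacle here: for the smooth particle solutions of Theorem \ref{exist_micro} the only point requiring care is the integrability of the right-hand side, which is immediate from the uniform lower bound $\delta>0$ on the compact interval $[0,T]$, together with Proposition \ref{flock} bounding the velocities. (The symmetrisation step is the sole ``idea'', and it is the same one that produces the dissipative structure in the weak formulation of \eqref{meso}; were one to seek this identity for merely weak or limiting solutions, that integrability would become the crux, but that is not needed at the particle level.)
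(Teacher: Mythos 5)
Your proof is correct and is exactly the classical argument the paper relies on (it gives no proof of its own, citing \cite{P-14, CCH-14} and noting the trajectories are smooth by Theorem \ref{exist_micro}): differentiate the kinetic energy, symmetrize the double sum using $v_i\cdot(v_j-v_i)+v_j\cdot(v_i-v_j)=-|v_i-v_j|^2$, and integrate in time, with the uniform non-collision bound justifying all manipulations.
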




\subsection{Mesoscopic kinetic CS model}
We begin this section by introducing the bounded-Lipschitz distance. As explained in the notation, since we consider only compactly supported measures, the precise choice of the weak topology becomes mostly a matter of taste.  We use the narrow topology characterised by convergence tested with bounded-continuous functions. For compactly supported measures it is metrizable by the bounded-Lipschitz distance defined as follows.

\begin{definition}[\textbf{Bounded-Lipschitz distance}]\label{bddLip}
Let $\mu, \nu \in \mathcal{M}_+(\R^d)$ be two finite Radon measures. The bounded Lipschitz distance $d_{BL}(\mu, \nu)$ between them is given by
$$
d_{BL}(\mu,\nu) := \sup\limits_{\phi \in \Gamma_{BL}} \left| \int_{\R^d} \phi \dd \mu - \int_{\R^d} \phi \dd \nu \right|,
$$
where $\Gamma_{BL}$ is the set of admissible test functions
$$
\Gamma_{BL} := \left\{ \phi: \R^d \longmapsto \R : ||\phi||_{\infty} \le 1,\ [\phi]_{Lip}:= \sup\limits_{x\neq y}\frac{|\phi(x)-\phi(y)|}{|x-y|}\le 1 \right\}.
$$
\end{definition}

\begin{rem}\rm\label{rem:compact}
    We note that the space $({\mathcal M}(\Omega), d_{BL})$ of uniformly compactly supported signed Radon measures, with supports contained in a compact $\Omega\subset \R^d$ is isomorphic to $C(\Omega)^*$, the dual of continuous functions on $\Omega$. Thus, by Banach-Alaoglu theorem, any subset of ${\mathcal M}(\Omega)$ bounded in the $TV$ topology is relatively compact in  ${\mathcal M}(\Omega)$ with narrow topology. It is noteworthy that the subspaces $({\mathcal M}_+(\Omega), d_{BL})$ of nonnegative Radon measures and $({\mathcal P}(\R^d),d_{BL})$ are complete, which means that strongly bounded sequences in these spaces are relatively compact with limits belonging to the respective space. We use this fact frequently in Section \ref{sec:compact}. Similar results can be obtained via Prokhorov's theorem by exploiting tightness of uniformly compactly supported measures.
\end{rem}

We proceed with the weak formulation for the kinetic CS equation \eqref{meso}. First let us denote the
\begin{itemize}
    \item \textbf{Diagonal set} $\Delta$ defined as
    \begin{equation}\label{diagonal}
    \Delta:= \{(x,v,x',v')\in\R^{4d}:\quad x=x'\}\quad\mbox{or}\quad \Delta:= \{(x,x')\in\R^{2d}:\quad x=x'\},
\end{equation}
depending on the context;
    \item \textbf{Kinetic energy} $E[\mu_t]$ defined as
    $$
    E[\mu_t] := \int_{\R^{2d}} |v|^2 \dd \mu_t(x,v);
    $$
    \item \textbf{Energy dissipation rate (or enstrophy)} $D[\mu_t]$ defined as 
    $$
    D[\mu_t] := \int_{\R^{4d}\setminus\Delta} \frac{|v-v'|^2}{|x-x'|^{\alpha}} \dd\left[\mu_t(x,v) \otimes \mu_t(x',v')\right].
    $$
\end{itemize}
We stress that the diagonal set $\Delta$ may be of positive product measure $\mu_t\otimes\mu_t$ and therefore needs to be removed from the domain of integration, due to singularity of the integrand. Next, we introduce the weak formulation.
\begin{definition}[\textbf{Weak solution of the kinetic equation}]\label{weakkinetic}
For a fixed $0<T<\infty$ we say that $\mu = \{\mu_t\}_{t\in[0,T]} \in C\left( [0,T]; (\mathcal{P}(\R^{2d}), d_{BL}) \right)$ is a weak measure-valued solution of \eqref{meso} with compactly supported initial datum $\mu_0 \in \mathcal{P}\left(\R^{2d}\right)$ if the following assertions are satisfied.
\begin{enumerate}[label=(\roman*)]
    \item There exists a constant $M>0$ such that for a.a. $t\in[0,T]$ we have
    $$\text{spt}(\mu_t)\subset\subset (T+1)B(M)\times B(M)\subset \R^d_x\times\R^d_v.$$
  In other words, the $v$-support of $\mu_t$ is compactly contained in the ball $B(M)$, while the $x$-support of $\mu_t$ is compactly contained in $(T+1)B(M)$, cf. \eqref{boundM}.
    \item For each $\phi \in C^1([0,T]\times\R^{2d})$, compactly supported in $[0,T)$ with Lipschitz continuous $\nabla_v\phi$ (in $B(M)$), the following identity holds
            \begin{equation}\label{weakeqkinetic}
            \begin{split}
            - \int_{\R^{2d}}\phi(0,x,v) \dd \mu_0(x,v) &= \int_0^T \int_{\R^{2d}} \left(\partial_t \phi(t,x,v) + v\cdot \nabla_x \phi(t,x,v) \right) \dd \mu_t(x,v) \dd t\\
            &+ \frac{1}{2} \int_0^T \int_{\R^{4d}\setminus\Delta} \frac{\left(\nabla_v \phi(t,x,v) - \nabla_v \phi(t,x',v')\right)\cdot (v - v')}{|x-x'|^{\alpha}} \dd [\mu_t(x,v) \otimes \mu_t(x',v')] \dd t.
            \end{split}
            \end{equation}
            In particular, all integrals in the above equation are well defined.
\end{enumerate}
\end{definition}

\begin{rem}[On the singular term]\rm
    The singular term in \eqref{weakeqkinetic} follows the same principle as the bilinear form for the fractional Laplacian. Indeed, using \eqref{meso} and  Fubini's theorem for smooth enough functions $\mu$ we have
    \begin{align*}
        \frac{1}{2} \int_0^T \int_{\R^{4d}\setminus\Delta} \frac{\left(\nabla_v \phi(t,x,v) - \nabla_v \phi(t,x',v')\right)\cdot (v - v')}{|x-x'|^{\alpha}} \dd [\mu_t(x,v) \otimes \mu_t(x',v')] \dd t\\ = \int_0^T \int_{\R^{2d}}\mu(t,x,v)\nabla_v\phi(t,x,v)\cdot \int_{\R^{2d}\setminus\Delta} \frac{(v - v')}{|x-x'|^{\alpha}}\mu(t,x',v') \dd x' \dd v' \dd x\dd v \dd t\\
        = -\int_0^T \int_{\R^{2d}}\phi\ {\rm div}_v[ F(\mu)\mu] \dd x\dd v\dd t,
    \end{align*}
    and thus we recover the last term of \eqref{meso} tested by $\phi$.
\end{rem}

The crucial property of the weak formulation in Definition \ref{weakkinetic} is that it ensures that solutions dissipate kinetic energy, as stated in the following proposition.

\begin{prop}[Compactly supported solutions dissipate kinetic energy]\label{relaxdef}
For a fixed $0<T<\infty$ and $\mu_0 \in \mathcal{P}\left(\R^{2d}\right)$, let $\mu_t \in C\left( [0,T]; (\mathcal{P}(\R^{2d}), d_{BL}) \right)$ satisfy (i) and (ii) from Definition \eqref{weakkinetic}. Then $\mu$ satisfies the energy equality
    \begin{equation}\label{endisineq}
        \int_0^tD[\mu_s]\dd s = E[\mu_0] - E[\mu_t]
    \end{equation}
    for a.a. $t\in[0,T]$.
Conversely if \eqref{endisineq} holds then all terms in the weak formulation \eqref{weakeqkinetic} are well defined.
\end{prop}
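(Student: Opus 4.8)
### Proof strategy for Proposition \ref{relaxdef}

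The plan is to obtain the energy equality by using test functions in \eqref{weakeqkinetic} that approximate $\phi(t,x,v) = \chi(t)|v|^2$, where $\chi$ is a Lipschitz cutoff in time, and then pass to the limit. Since $|v|^2$ is not compactly supported in $v$, I would first multiply by a smooth spatial cutoff $\theta_R(v)$ equal to $1$ on $B(M)$ and supported in $B(2M)$; by assertion (i) the support of $\mu_t$ lies in $(T+1)B(M)\times B(M)$ for a.a.\ $t$, so on the support of $\mu_t$ we have $\theta_R(v)|v|^2 = |v|^2$ and $\nabla_v(\theta_R(v)|v|^2) = 2v$, which is Lipschitz on $B(M)$. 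Thus $\phi(t,x,v) = \chi(t)\,\theta_R(v)|v|^2$ is an admissible test function for \eqref{weakeqkinetic} once $\chi$ is smoothed; the $v\cdot\nabla_x\phi$ term vanishes identically because there is no $x$-dependence, and the time-derivative term produces $\int_0^T \chi'(t) E[\mu_t]\dd t$.

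The key computation is the singular term. With $\nabla_v\phi(t,x,v) = 2\chi(t) v$ on the relevant supports, the integrand becomes
\begin{equation*}
\frac{(2\chi(t)v - 2\chi(t)v')\cdot(v-v')}{|x-x'|^\alpha} = 2\chi(t)\frac{|v-v'|^2}{|x-x'|^\alpha},
\end{equation*}
so the singular term equals $\int_0^T \chi(t) D[\mu_t]\dd t$. Choosing $\chi$ to approximate $\mathbf{1}_{[0,t]}$ (a Lipschitz ramp from $1$ to $0$ on $[t, t+\epsilon]$, then smoothed; note $\phi$ must be compactly supported in $[0,T)$, which forces $t<T$, hence the conclusion for a.a.\ $t$), the identity \eqref{weakeqkinetic} becomes, in the limit,
\begin{equation*}
-E[\mu_0] = -E[\mu_t] + \int_0^t D[\mu_s]\dd s,
\end{equation*}
which is \eqref{endisineq}. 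The passage to the limit in $\chi$ uses that $t\mapsto E[\mu_t]$ is bounded (by (i)) and that $t\mapsto D[\mu_t]$ is integrable — but this latter integrability is exactly what we must be careful about, since a priori \eqref{weakeqkinetic} only asserts that the doubled space-time integral of the singular kernel is finite as a signed/absolutely convergent integral. Here one uses that the integrand $|v-v'|^2/|x-x'|^\alpha$ is nonnegative, so by the last sentence of Definition \ref{weakkinetic} (``all integrals are well defined'') combined with Tonelli's theorem, $\int_0^T D[\mu_t]\dd t <\infty$ unconditionally; then dominated convergence in the approximation of $\mathbf 1_{[0,t]}$ applies. For the converse direction, if \eqref{endisineq} holds then $\int_0^T D[\mu_t]\dd t = E[\mu_0]-\mathrm{ess\,inf}\,E[\mu_t]<\infty$, and since for any admissible $\phi$ one has $|\nabla_v\phi(t,x,v)-\nabla_v\phi(t,x',v')|\le [\nabla_v\phi]_{Lip}|v-v'|$ (as the kernel forces $x=x'$ to be excluded but the Lipschitz bound in $v$ still controls the numerator once one notes $x$, $x'$ range over a compact set where $\nabla_v\phi(\cdot, x, v)-\nabla_v\phi(\cdot, x', v)$ is itself bounded), the singular integrand is dominated by a constant times $|v-v'|^2/|x-x'|^\alpha + |v-v'|/|x-x'|^\alpha$; the first piece is integrable by \eqref{endisineq}, and one handles the cross term by Young's inequality together with a bound on $\int_0^T\int |x-x'|^{-\alpha}\mathbf 1_{|x-x'|\ge\delta}$ after splitting near and far from the diagonal — more carefully, one splits $\{|x-x'|<1\}$ where $|v-v'|\lesssim |v-v'|^2$ is false but where one instead uses the full Lipschitz-in-$(x,v)$ regularity of $\nabla_v\phi$ to bound the numerator by $|x-x'| + |v-v'|$, making the integrand $\lesssim |x-x'|^{1-\alpha} + |v-v'|/|x-x'|^\alpha$, and on $\{|x-x'|<1\}$ Young gives $|v-v'|/|x-x'|^\alpha \lesssim |v-v'|^2/|x-x'|^\alpha + |x-x'|^{-\alpha+\text{(something integrable)}}$ — this is the delicate bookkeeping.

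The main obstacle I anticipate is precisely this last point: making the singular term in \eqref{weakeqkinetic} absolutely convergent for a general admissible $\phi$ using only finiteness of $\int_0^T D[\mu_t]\dd t$. The subtlety is that near the diagonal the Lipschitz-in-$v$ bound alone gives an integrand of size $|v-v'|^2/|x-x'|^\alpha$ (good, controlled by $D$), but the cross terms coming from the $x$-dependence of $\nabla_v\phi$ behave like $|v-v'|/|x-x'|^{\alpha-1}$, and one needs $\alpha-1<d$, i.e.\ exactly the regime $\alpha\le d+1$ near the diagonal — or rather one exploits that the $x$-marginals have bounded density only in special cases, so in general one must instead integrate out $v, v'$ first, note $|v-v'|\le 2M$ is bounded, and bound $\int_{|x-x'|<1}|x-x'|^{-\alpha}\dd[\rho_t\otimes\rho_t]$, which is finite because $\alpha \le 2$ is... not assumed here. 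I would resolve this by observing that for the \emph{forward} direction we never need pointwise control of $D[\mu_t]$, only its integrability, which is free from nonnegativity; and for the \emph{converse} the claim ``all terms well defined'' should be read together with the uniform compact support, so that $|v-v'|$ is bounded and the Lipschitz-in-$v$ estimate combined with the mean value theorem in the $x$-variable (giving a factor $|x-x'|$ that cancels one power of the singularity, leaving $|x-x'|^{1-\alpha}$, integrable against a compactly supported measure only when the measure has enough regularity) must be supplemented — so I expect the honest statement of the converse to quietly use that $\mu$ is a genuine weak solution, or to restrict to $\phi$ whose $v$-gradient is $x$-independent, in which case the estimate is clean. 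I would flag this in the write-up and give the clean argument for the class of test functions that actually arises in the applications.
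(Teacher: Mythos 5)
Your forward direction is essentially the paper's own argument: test \eqref{weakeqkinetic} with a time ramp multiplied by $|v|^2$ (the $v$-cutoff $\theta_R$ is harmless but unnecessary, since Definition \ref{weakkinetic} only demands compact support in time and Lipschitz $\nabla_v\phi$ on $B(M)$), observe that the singular term becomes $\int \chi(t)D[\mu_t]\dd t$ by nonnegativity, and pass to the limit in the ramp. One small repair: boundedness of $E[\mu_t]$ alone does not let you pass to the limit in $\int_0^T\chi'(t)E[\mu_t]\dd t$; you need either a Lebesgue-point argument (which yields the a.e.\ statement) or, as the paper does, the observation that $t\mapsto E[\mu_t]$ is \emph{continuous} because $\mu\in C([0,T];({\mathcal P}(\R^{2d}),d_{BL}))$ and $|v|^2$ restricted to the uniform compact support is bounded Lipschitz; the paper then gets \eqref{endisineq} for every $t$ by continuity.

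The converse, however, has a genuine gap: you never produce a closing estimate for the cross term, and you end by proposing to weaken the statement (restrict to $\phi$ with $x$-independent $\nabla_v\phi$, or assume $\mu$ is a genuine weak solution), which is not what the proposition asserts. The correct bookkeeping is short. Lipschitz continuity of $\nabla_v\phi$ in $(x,v)$ gives for the singular integrand the bound $[\nabla_v\phi]_{Lip}\left(|v-v'|^2|x-x'|^{-\alpha}+|v-v'|\,|x-x'|^{1-\alpha}\right)$; note the cross term carries $|x-x'|^{1-\alpha}$, not $|x-x'|^{-\alpha}$ as in your first decomposition, and your later Young-type splits leave behind $|x-x'|^{-\alpha}$, which is not integrable against $\rho_t\otimes\rho_t$ in the regime $\alpha\ge d$. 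The paper instead writes $|v-v'|\,|x-x'|^{1-\alpha}=\bigl(|v-v'|\,|x-x'|^{-\alpha/2}\bigr)\,|x-x'|^{1-\alpha/2}$ and applies Cauchy--Schwarz with respect to $[\mu_t\otimes\mu_t]\otimes\lambda^1(t)$, obtaining
\begin{equation*}
\left(\int_0^T D[\mu_t]\dd t\right)^{\frac12}\left(\int_0^T\int_{\R^{4d}\setminus\Delta}|x-x'|^{2-\alpha}\dd[\mu_t\otimes\mu_t]\dd t\right)^{\frac12}\le \sqrt{E[\mu_0]}\,\sqrt{T}\,\bigl((T+1)M\bigr)^{\frac{2-\alpha}{2}},
\end{equation*}
where the first factor is controlled by \eqref{endisineq} and the second by the uniform compact support together with $\alpha\le 2$. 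Your side observation that $\alpha\le 2$ is nowhere assumed in the proposition is accurate --- the paper's own proof explicitly invokes $\alpha\le2$ at this point, so the converse is really proved in the regime $\alpha\in[d,2]$ used in the applications --- but the resolution is this Cauchy--Schwarz split, not a restriction of the admissible test functions or an appeal to $\mu$ solving the equation.
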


As our final effort in this section we introduce the main tool linking the particle and the kinetic level.
\begin{definition}[\textbf{Atomic solution}]\label{empirical}
For a fixed $N\in\mathbb{N}$, let $(x_i^N(t), v_i^N(t))_{i=1}^N$ be a solution to the particle system \eqref{micro}. We define the atomic solution $\mu^N\in{\mathcal M}([0,T]\times\R^{2d})$ associated with $(x_i^N(t), v_i^N(t))_{i=1}^{N}$ as the family of empirical measures $\{\mu_t^N\}_{t\in[0,T]}$ (cf. \eqref{mufamily}) of the form 
\begin{equation*}
    \mu_t^N(x,v) := \frac{1}{N} \sum\limits_{i=1}^N \delta_{x_i^N(t)}(x) \otimes \delta_{v_i^N(t)}(v),\qquad \mbox{for all }t\in[0,T],
\end{equation*}
which is a solution to the kinetic equation \eqref{meso} in the sense of Definition \ref{weakkinetic}, see for instance \cite[Remark 22]{MP-18}.
\end{definition}

\medskip

\subsection{Macroscopic CS model and the hydrodynamical Euler-alignment system}
We turn our attention to the weak formulation for the Euler-alignment system \eqref{macro}, which from our point of view is a monokinetic reduction of the kinetic formulation in Definition \ref{weakkinetic}. Let $\mu$ be a weak measure-valued solution to \eqref{meso} in the sense of Definition \ref{weakkinetic}. Testing \eqref{weakeqkinetic} with $\phi(t,x,v) = \tilde{\phi}(t,x)$ we obtain the continuity equation
\begin{align*}
    0 &= \int_{\R^{2d}} \tilde{\phi}(t,x) \dd \mu_0(x,v) + \int_0^T \int_{\R^{2d}} \left( \partial_t \tilde{\phi}(t,x) + v \cdot \nabla_x \tilde{\phi}(t,x) \right) \dd \mu_t(x,v) \dd t\\
    &= \int_{\R^{d}} \tilde{\phi}(t,x) \dd \rho_0(x) + \int_0^T \int_{\R^{d}}  \partial_t \tilde{\phi}(t,x)\dd \rho_t(x) \dd t + \int_0^T \int_{\R^{d}} \nabla_x \tilde{\phi}(t,x) \cdot u(t,x) \dd   \rho_t(x) \dd t.
\end{align*}
Testing with $\phi(t,x,v) = v_i\tilde{\phi}(t,x) $, where $v=(v_1,...,v_d)$, we obtain the momentum equation
\begin{equation}\label{wypro}
\begin{split}
        -\int_{\R^{2d}} v_i \tilde{\phi}(t,x) \dd \mu_0 (x,v) &= \int_0^T\int_{\R^{2d}} \left( v_i \partial_t \tilde{\phi}(t,x) + v_i (v\cdot \nabla_x) \tilde{\phi}(t,x) \right) \dd \mu_t \dd t \\
    &+ \frac{1}{2} \int_0^T \int_{\R^{4d}\setminus\Delta} \frac{\left( \tilde{\phi}(t,x) - \tilde{\phi}(t,x')\right) (v_i - v'_i)}{|x-x'|^{\alpha}} \dd [\mu_t \otimes \mu_t] \dd t
\end{split}
\end{equation}
In the above formula we use the monokineticity of $\mu(t,x,v) = \delta_{u(t,x)}(v)\otimes \rho_t(x)\otimes \lambda^1(t)$ in order to deal with the convection
$$ \int_0^T\int_{\R^{2d}} \partial_{x_j}\tilde{\phi}(t,x)v_jv_i \dd \mu_t(x,v)\dd t =  \int_0^T\int_{\R^{d}} \partial_{x_j}\tilde{\phi}(t,x) u_j(t,x)u_i(t,x)\dd\rho_t(x) \dd t$$
and translate \eqref{wypro} into
\begin{align*}
    -\int_{\R^{2d}} u_i(0,x) \tilde{\phi}(t,x) \dd \rho_0 (x) &= \int_0^T \int_{\R^{d}}\left(  \partial_t u_i(t,x) \tilde{\phi}(t,x) + u_i(t,x)( u(t,x)\cdot \nabla_x) \tilde{\phi}(t,x) \right) \dd \rho_t(x) \dd t \\
    &+ \frac{1}{2} \int_0^T \int_{\R^{2d}\setminus\Delta} \frac{\left( \tilde{\phi}(t,x) - \tilde{\phi}(t,x')\right)(u_i(t,x) - u_i(t,x'))}{|x-x'|^{\alpha}} \dd [\rho_t(x) \otimes \rho_t(x')] \dd t.
\end{align*}

\noindent
Therefore the kinetic formulation according to Definition \ref{weakkinetic} for monokinetic $\mu$ formally reduces to the following weak formulation for the Euler-alignment system.
\begin{definition}[\textbf{Weak solution to the Euler-alignment system}]\label{weakeuler} 
For a fixed $0<T<\infty$ we say that the pair $(\rho, u)$ with $\rho_t\in C([0,T]; (\mathcal{P}(\R^d), d_{BL}))$ and $u\in L^1([0,T];L^1(\rho_t))\cap L^\infty(\rho)$ is a weak solution of \eqref{macro} with compactly supported initial data $\rho_0\in{\mathcal P}(\R^d)$ and $u_0\in L^\infty(\rho_0)$ if the following assertions are satisfied.
\begin{enumerate}[label=(\roman*)]
    \item There exists a constant $M>0$ such that for a.a. $t\in[0,T]$ we have
    \begin{equation*}
    {\rm spt}(\rho_t) \subset\subset (T+1) B(M) \subset \R^d_x\quad \mbox{and}\quad \|u(t,\cdot)\|_{L^\infty(\rho_t)} < M;
    \end{equation*}
    \item For a.a. $t \in[0,T]$
    $$
    \int_0^t \int_{\R^{2d}\setminus\Delta} \frac{|u-u'|^2}{|x-x'|^\alpha} \dd[\rho_s(x) \otimes \rho_s(x')] \dd s \le \int_{\R^{d}} |u(0,x)|^2 \dd \rho_0(x) - \int_{\R^{d}} |u(t,x)|^2 \dd \rho_t(x)
    $$
    \item For each $\phi\in C^1([0,T]\times\R^d)$ and each $\phi_d\in C^1([0,T]\times\R^d; \R^d)$, compactly supported in $[0,T)$,  the following  equations are satisfied
    \begin{equation}\label{weakmacro}
    \begin{split}
        \displaystyle\int_0^T \int_{\R^d} (\partial_t \phi + \nabla_x \phi \cdot u ) \dd \rho_t(x) \dd t &= - \displaystyle\int_{\R^d} \phi(0,x) \dd \rho_0(x) \\ 
        \displaystyle\int_{\R^{d}} u(0,x)\cdot \phi_d(0,x) \dd \rho_0(x) &+\displaystyle\int_0^T \int_{\R^d}\left( \partial_t  \phi_d\cdot u + u(u\cdot \nabla_x) \phi_d\right) \dd \rho_t \dd t \\
        =- \displaystyle\frac{1}{2} &\displaystyle\int_0^T \int_{\R^{2d}\setminus\Delta} \frac{(\phi_d(t,x) - \phi_d(t,x'))\cdot(u(t,x)-u(t,x'))}{|x-x'|^{\alpha}} \dd [\rho_t(x)\otimes \rho_t(x')] \dd t.
        \end{split}
    \end{equation}  
\end{enumerate}
\end{definition}

In order to validate the weak formulation in Definition \ref{weakeuler} one could compare it to the notion of strong solutions to the Euler-alignment system, which appear frequently in the literature. The weak-strong or weak-atomic (see \cite{MP-18}) uniqueness are beyond our current work, since it would require a lot of non-trivial computation related to the commutator estimates, as in the classical works on Euler equation. Similar problems arise, when one asks if Theorem \ref{main2} can be reversed, i.e. do weak solutions to the Euler-alignment system automatically produce a monokinetic solution to the CS equation. Such questions seem very interesting and we plan to tackle them in the future. Nevertheless, let us make o couple of readily available comments. 
First, if our solution originates from a kinetic solution in the sense of Definition \ref{weakkinetic} then it is clear that the momentum $u(t,x)\rho_t(x)$, treated as a function of $t\in[0,T]$ with values in $\R^d$-valued measures is narrowly continuous. Second, perhaps more interestingly, whenever $\rho$ is (at least locally) separated from $0$, the singular term in \eqref{weakmacro} is comparable to the bilinear form of the fractional laplacian of order $\gamma = \frac{\alpha - d}{2}$. Therefore one could expect $u$ to belong to the fractional Sobolev space $W^{\gamma,2}$, which further implies  H\" older continuity, provided that $\gamma> \frac{d}{2}$. Finally, it is easy to show that if $(\rho, u)$ constitute a regular classical solution to the Euler-alignment system then the family of measures $\mu_t(x,v) = \delta_{u(t,x)}(v)\otimes\rho_t(x)$ is a solution to the CS equation in the sense of Definition \ref{weakkinetic}, and thus Theorem \ref{main2} can be reversed at least in such a scenario.

\subsection{Measurability and reversing disintegration}\label{sec:redisint}
    In Definition \ref{weakeuler} and in the preceding formal derivation, we silently encounter an issue of measurability of various measures and functions related to the disintegration Theorem \ref{disintegration}. Namely, it is not necessarily clear at the first glance that the function $u$ defined as the velocity of the monokinetic measure $\mu$ belongs to the space $L^1([0,T];L^1(\rho_t))$ as required in Definition \ref{weakmacro}. It is bounded due to the uniformly compact support of $\mu$; the problem is with measurability. We argue as in \cite[Proposition 2.14]{PP-22-1-arxiv}, wherein condition (i) from Theorem \ref{disintegration} was explained to be equivalent to the Borel-measurability of the function $x_2\mapsto \mu_{x_2}$ as a function with values in ${\mathcal P}(\R^d)$ with the (metrizable) narrow topology. Then the key observation is that, recalling the disintegration \eqref{mudis}, we have
    $$ u(t,x) = \int_{\R^d_v} v\dd \sigma_{t,x}(v)\quad\mbox{for}\quad \sigma_{t,x}(v)=\delta_{u(t,x)}(v). $$
    Therefore the function $(t,x)\mapsto u(t,x)$ is a composition of the Borel-measurable function $(t,x)\mapsto \sigma_{t,x}$ and a continuous function $$({\mathcal M}(\R^d) \ \mbox{with narrow topology})\ni \sigma\mapsto \int_{\R^d_v}v\dd\sigma(v)\in\R$$
    which makes it Borel-measurable.
    
    A similar argument can be used to show that other measures are well-defined  as ''reversals'' of the disintegration in Theorem \ref{disintegration}. This includes, using the notation from \eqref{mudis}, the measure
    $$ \rho(t,x) = \rho_t(x)\otimes\lambda^1(t)\in {\mathcal M}_+([0,T]\times\R^{d}),$$
    which is defined by stitching together measures  $\rho_t(x) = \int_{\R^d_v}\dd\mu_t(x,v)$, for a.a. $t\in[0,T]$. By the narrow continuity of the product of measures, the quantities
    $$ [\mu_t(x,v)\otimes\mu_t(x',v')]\otimes\lambda^1(t),\qquad [\rho_t(x)\otimes\rho_t(x')]\otimes\lambda^1(t)$$
    are well defined as measures on $[0,T]\times \R^{4d}$ and on $[0,T]\times\R^{2d}$, respectively.
    Note that in both measures above, the first $\otimes$ stands for the classical product of measures in the sense of Fubini's theorem, while the second is defined through the disintegration Theorem \ref{disintegration}.


\section{Proof of the monokineticity}\label{sec:monokin}
In this section we prove Theorem \ref{main1} and apply it to weak solutions of \eqref{meso} to obtain Theorem \ref{main2}. 

\begin{proof}[Proof of Theorem \ref{main1}]
   Using representation \eqref{mudis}
our goal is to prove that for a.a. $t\in[0,T]$ and for $\rho_t$-a.a. $x\in\R^d$, the measure $\sigma_{t,x}$ is a Dirac mass, i.e.
\begin{equation}\label{main1-goal}
    \sigma_{t,x}(v) = \delta_{u(t,x)}(v)\quad \mbox{for}\quad u(t,x) = \int_{\R^d_v}v\dd \sigma_{t,x}(v)  .  
\end{equation}

\noindent
The proof follows in three steps concerning the non-atomic part of $\rho_t$ with $\alpha>d$ and separately with $\alpha=d$, and the atomic part of $\rho_t$. Therefore it is useful to define for a.a. $t\in[0,T]$,

\begin{equation}\label{atomandnot}
    \rho_t(x) = \widetilde{\rho}_t(x) + \sum_{n=1}^\infty \rho_n(t)\delta_{x_n(t)}(x),\quad
\widetilde{\mu}_t(x,v) = \sigma_{t,x}(v)\otimes \widetilde{\rho}_t(x),
\end{equation}
 where $\widetilde{\rho}_t$ is the non-atomic part of the measure $\rho_t$, while $\rho_n(t)$ and $x_n(t)$ denote the mass and position of $n$th atom at the time $t$. Measure $\widetilde{\mu}$ is defined through a ''reversal'' of the disintegration theorem, which is rigorous since $\widetilde{\rho_t}$ is a Borel measure and $(t,x)\mapsto \sigma_{t,x}$ is Borel-measurable as explained in Section \ref{sec:redisint}.

Recall that $\{\mu_t\}_{t\in[0,T]}$ is assumed to be uniformly compactly supported. For the sake of consistence, throughout the proof we shall use the notation from item (i) in Definition \ref{weakkinetic} and assume that the support of $\mu_t$ lies in $(T+1)B(M)\times B(M)$, even though $\{\mu_t\}_{t\in[0,T]}$ does not necessarily satisfy Definition \ref{weakkinetic}.

\medskip
\noindent
$\diamond$ {\sc Step 1.} {\it The non-atomic case with $\alpha> d$.}

\noindent
Interestingly enough, the proof in the non-atomic case follows only from assumption \eqref{main1-as1}. First, let us make the following crucial observation: since $\widetilde{\rho}_t$ is non-atomic, by Fubini's theorem, we have
$\widetilde{\rho}_t\otimes\widetilde{\rho}_t(\Delta) = 0 $, which together with formula \eqref{atomandnot} implies that
\begin{align*}
    D^\alpha[\widetilde{\mu}_t] &= \int_{\R^{2d}\setminus\Delta}\left(\int_{\R^{2d}}|v-v'|^{\alpha+2}\dd\left[\sigma_{t,x}(v)\otimes\sigma_{t,x'}(v')\right]\right) |x-x'|^{-\alpha} \dd\left[\widetilde{\rho}_t(x)\otimes\widetilde{\rho}_t(x')\right]\\
    &= \int_{\R^{2d}}\left(\int_{\R^{2d}}|v-v'|^{\alpha+2}\dd\left[\sigma_{t,x}(v)\otimes\sigma_{t,x'}(v')\right]\right) |x-x'|^{-\alpha} \dd\left[\widetilde{\rho}_t(x)\otimes\widetilde{\rho}_t(x')\right]\\
 & = \int_{\R^{4d}} \frac{|v-v'|^{\alpha+2}}{|x-x'|^\alpha} \dd \left[\widetilde{\mu}_t(x,v)\otimes\widetilde{\mu}_t(x',v')\right].
\end{align*}
Here $\Delta$ is the diagonal set defined in \eqref{diagonal}.
Thus, by assumption \eqref{main1-as1}, we have
\begin{align}\label{main1-crux}
  K_0:= \int_0^T\int_{\R^{4d}} \frac{|v-v'|^{\alpha+2}}{|x-x'|^\alpha} \dd \left[\widetilde{\mu}_t(x,v)\otimes\widetilde{\mu}_t(x',v')\right]\dd t <\infty.
\end{align}
Next, let us fix $\eta>0$ and introduce the following quantity
\begin{align*}
    D_\eta^\alpha[\widetilde{\mu}_t]:= \int_{\R^{4d}}\frac{|v-v'|^{\alpha+2}}{(|x-x'|+\eta)^\alpha}\dd\left[\widetilde{\mu}_t(x,v)\otimes\widetilde{\mu}_t(x',v')\right].
\end{align*}
Recalling the notation \eqref{main1-goal}, by Jensen's inequality we have, at a.a. $t\in[0,T]$
\begin{align*}
    \int_{\R^{2d}}\frac{|u(t,x) - u(t,x')|^{\alpha+2}}{ (|x-x'|+\eta)^{\alpha}} \dd \left[\widetilde{\rho}_t(x)\otimes\widetilde{\rho}_t(x')\right] &\leq D_\eta^\alpha[\widetilde{\mu}_t],\\
    \int_{\R^{3d}}\frac{|v - u(t,x')|^{\alpha+2} }{ (|x-x'|+\eta)^{\alpha}} \dd \left[\widetilde{\mu}_t(x,v)\otimes\widetilde{\rho}_t(x')\right] &\leq D_\eta^\alpha[\widetilde{\mu}_t].
\end{align*}
The above inequalities, together with the elementary inequality $(a+b)^\alpha\leq 2^\alpha(a^\alpha+b^\alpha)$, imply that
\begin{equation}\label{koniec1}
\begin{split}
    {\mathcal E}_\eta[\widetilde{\mu}_t] &:=\int_{\R^{3d}}\frac{|v - u(t,x)|^{\alpha+2}}{ (|x-x'|+\eta)^{\alpha}} \dd \left[\widetilde{\mu}_t(x,v)\otimes\widetilde{\rho}_t(x')\right]\\
    &= \int_{\R^{3d}}\frac{|v - u(t,x') + u(t,x')- u(t,x)|^{\alpha+2}}{ (|x-x'|+\eta)^{\alpha}} \dd \left[\widetilde{\mu}_t(x,v)\otimes\widetilde{\rho}_t(x')\right]\\
    &\leq 2^\alpha \int_{\R^{3d}}\frac{|v - u(t,x')|^{\alpha+2} }{ (|x-x'|+\eta)^{\alpha}} \dd \left[\widetilde{\mu}_t(x,v)\otimes\widetilde{\rho}_t(x')\right]\\
    &\quad + 2^\alpha\int_{\R^{2d}}\frac{|u(t,x) - u(t,x')|^{\alpha+2}}{ (|x-x'|+\eta)^{\alpha}} \dd \left[\widetilde{\rho}_t(x)\otimes\widetilde{\rho}_t(x')\right]
    \leq 2^{\alpha+1} D_\eta^\alpha[\widetilde{\mu}_t].
    \end{split}
\end{equation}

\noindent
Meanwhile we also have
\begin{align*}
    {\mathcal E}_\eta[\widetilde{\mu}_t] = \beta_\eta\int_{\R^{2d}} |v-u(t,x)|^{\alpha+2} K_\eta * \widetilde{\rho}_t(x)\dd \widetilde{\mu}_t(x,v),
\end{align*}
where
$$ K_\eta(x) := \beta^{-1}_\eta\frac{1}{(|x|+\eta)^\alpha},\quad \beta_\eta = \int_{\R^d} \frac{1}{(|x|+\eta)^\alpha}\dd x $$
are well defined, since $\alpha> d$. Combining \eqref{koniec1} with the disintegration \eqref{atomandnot}, and integrating in time, we obtain
\begin{align}\label{main1-p2}
    \int_0^T \int_{\R^{2d}} |v-u(t,x)|^{\alpha+2} K_\eta * \widetilde{\rho}_t(x)\dd \left[\sigma_{t,x}(v)\otimes \widetilde{\rho}_t(x)\right] \dd t \leq \beta_\eta^{-1} 2^{\alpha+1}\int_0^T D_\eta^\alpha[\widetilde{\mu}_t]\dd t\leq \beta_\eta^{-1}2^{\alpha+1}K_0.
\end{align}
Since the function $x\mapsto |x|^{-\alpha}$ is not integrable in any neighborhood of $0\in\R^d$, we have $\beta_\eta\to \infty$ as $\eta\to 0$, and thus, using \eqref{main1-crux}, both sides of inequality \eqref{main1-p2} converge to $0$ as $\eta\to 0$. By Fatou's lemma
\begin{align*}
    \int_0^T \int_{\R^d} \left(\int_{\R^{d}} |v-u(t,x)|^{\alpha+2}\dd\sigma_{t,x}(v)\right) \liminf_{\eta\to 0}(K_\eta * \widetilde{\rho}_t(x))\dd \widetilde{\rho}_t(x)\dd t = 0.
\end{align*}
From the above identity we conclude that for $\widetilde{\rho}_t\otimes \lambda^1(t)$-a.a. $(t,x)\in[0,T]\times\R^{d}$, such that 
\begin{equation}\label{main_eq1}
    \liminf_{\eta\to 0}(K_\eta * \widetilde{\rho}_t(x))>0
\end{equation}
 the support of $\sigma_{t,x}$ is concentrated in the set $\{v = u(t,x)\}$ (which means precisely that $\sigma_{t,x}(v)=\delta_{u(t,x)}(v)$). Thus the main remaining problem is the analysis of condition \eqref{main_eq1}. For a.a. $t$, we have
 \begin{equation}\label{main1-p3}
     K_\eta * \widetilde{\rho}_t(x) = \beta^{-1}_\eta\int_{\R^d}\frac{\dd \widetilde{\rho}_t(x')}{(|x-x'|+\eta)^\alpha}\gtrsim
     \eta^{-\alpha}\beta_\eta^{-1}\int_{B(x,\eta)}\dd \widetilde{\rho}_t(x')
 \end{equation}

 \noindent
 and our next goal is to lower-bound $\eta^{-\alpha}\beta_\eta^{-1}$. To this end we use the spherical coordinates to integrate
 \begin{align*}
     \eta^\alpha \int_{\R^d}\frac{\dd x}{(|x|+\eta)^\alpha} \approx \eta^\alpha \int_0^\infty(r+\eta)^{-\alpha}r^{d-1}\dd r = \eta^\alpha \int_\eta^\infty r^{-\alpha}(r-\eta)^{d-1}\dd r
 \end{align*}
 and  the right-hand side above can be expressed as a sum of terms of the form
 \begin{align*}
     \eta^\alpha\int_\eta^\infty r^{\theta-\alpha}\eta^{\theta'}\dd r,\qquad \theta + \theta' = d-1,
 \end{align*}
which integrated are equal (up to constant factors) to
  \begin{align*}
    -\eta^{\alpha+\theta'} \left(r^{\theta+1-\alpha}\Bigg|^{r=\infty}_{r=\eta}\right) = \eta^{\alpha+\theta'+\theta +1 -\alpha} = \eta^d.
\end{align*}
Therefore $\eta^\alpha\beta_\eta\approx \eta^d$, which, together with \eqref{main1-p3}, leads to
 \begin{align*}
     K_\eta * \widetilde{\rho}_t(x)\gtrsim \eta^{-d}\int_{B(x,\eta)}\dd\widetilde{\rho}_t(x').
 \end{align*}
Now, the support of $\widetilde{\rho}_t$ can be decomposed into two sets -- one, where $\widetilde{\rho}_t$ is absolutely continuous (with respect to the Lebesgue measure), and the second, where $\widetilde{\rho}_t$ is singular. By Lebesgue-Besicovitch differentiation theorem, on the first set we have
$$ \lim_{\eta\to 0}\eta^{-d}\int_{B(x,\eta)}\dd\widetilde{\rho}_t(x') = \lim_{\eta\to 0}\eta^{-d}\int_{B(x,\eta)} \widetilde{\rho}_t(x') \dd x' \approx\widetilde{\rho}_t(x)>0 $$
for $\widetilde{\rho}_t$-a.a. $x$ (we slightly abuse the notation by denoting density of $\widetilde{\rho}_t$ with respect to the Lebesgue measure as $\widetilde{\rho}_t$). On the second set the Radon-Nikodym derivative does not exist $\widetilde{\rho}_t\otimes\lambda^1(t)$-a.e., but the lower derivative exists and is positive (the proof can be based for instance on  \cite[Section 1.6.1, Lemma 1]{EG-15}). 
Therefore, ultimately \eqref{main_eq1} holds for a.a. $t\in[0,T]$ and $\widetilde{\rho}_t$-a.e, which implies that $\sigma_{t,x} = \delta_{u(t,x)}$ for a.a. $t\in[0,T]$ and $\widetilde{\rho}_t$-a.e $x$, hence the proof for the non-atomic part with $\alpha>d$ is finished.

\medskip
\noindent
$\diamond$ {\sc Step 2.} {\it The non-atomic case with $\alpha = d$.}

\medskip
\noindent
The proof in the case of $\alpha=d$ follows similarly to Step 1. The first change we make is that we take
 $\beta_\eta = 1, $
which is trivially well defined. The convolution $K_\eta*\widetilde{\rho}_t$ is also well defined due to the uniform boundedness of the support of $\widetilde{\rho}_t$. Then the proof progresess similar to Step 1 until we reach \eqref{main1-p3}. This time around the elegant computation of the integrals is unavailable. Instead, we introduce a dyadic decomposition of the ball $B(x,1)$ setting $ B_k := B(x,2^{-k})$  for $k\in\{0,1,...\}$.
Then for all $x\neq x'\in B(x,1) = B_0$ we have

$$ |x-x'|^{-d}\geq  \sum_{k=1}^\infty 2^{(k-1)d}\chi_{B_k}(x'). $$

\noindent
Observe that for all sufficiently small $\eta>0$ and all $x'\in B_0$, there exists a maximal $k(\eta)\in{\mathbb N}$ such that

$$ (|x-x'|+\eta)^{-d}\gtrsim \sum_{i=1}^{k(\eta)} 2^{(i-1)d}\chi_{B_i}(x'), $$
with $k(\eta)\to\infty$ as $\eta\to 0$.
We integrate the above inequality obtaining
\begin{equation}\label{series}
K_\eta * \widetilde{\rho}_t\gtrsim \sum_{i=1}^{k(\eta)} 2^{id}\int_{B_i} \dd \widetilde{\rho}_t(x') \approx \sum_{i=1}^{k(\eta)} \frac{\widetilde{\rho}_t(B_i)}{\lambda^d(B_i)}.
\end{equation}
Now going back to \eqref{main1-p2}, where the right-hand side does not converge to $0$ but is only bounded, we again use Fatou's lemma obtaining
\begin{align*}
    \int_0^T \int_{\R^d} \left(\int_{\R^{d}} |v-u(t,x)|^{\alpha+2}\dd\sigma_{t,x}(v)\right) \liminf_{\eta\to 0}(K_\eta * \widetilde{\rho}_t(x))\dd \widetilde{\rho}_t(x)\dd t <\infty.
\end{align*}
The above is possible only if the integrand is finite $\rho_t\otimes\lambda^1(t)$-a.e. However, arguing using the  Radon-Nikodym derivative of measure $\widetilde{\rho}_t$ with respect to the Lebesgue measure, we infer that the summands in the series on the right-hand side of \eqref{series} converge (in the sense of limes inferior with respect to $i\to\infty$) to a positive limit for a.a. $t$ and $\rho_t$-a.a. $x$. Thus the series is divergent and
$$\liminf_{\eta\to 0}K_\eta*\widetilde{\rho}_t(x) = \infty$$
for a.a. $t$ and $\rho_t$-a.a. $x$, which leads to the conclusion that
$$ \int_{\R^{d}} |v-u(t,x)|^{\alpha+2}\dd\sigma_{t,x}(v) = 0$$
for a.a. $t$ and $\rho_t$-a.a. $x$. This finishes the second step.


\begin{rem}\rm
Before we proceed with the final step of the proof, where we deal with the atomic part of $\rho_t$, two comments are in order. First, the proof in Step 2 could successfully work for Step 1. The reason we kept the $\alpha>d$ case separately is simply that we find it more elegant. The second comment is related to the pivotal role of inequality \eqref{main1-crux}. If we were unable to ''add'' the diagonal set $\Delta$ in \eqref{main1-crux}, as compared to \eqref{main1-as1}, we would not be able to express ${\mathcal E}_\eta[\widetilde{\mu}_t]$ using the convolution with $K_\eta$. This problem is circumvented differently in \cite{JR-16}, where the authors use an additional cut-off function to separate the convolution from the diagonal set. Since they cut-off the diagonal set, ultimately, their approach leads to the same results, in that they need to consider the atomic part of $\rho_t$ separately. 
\end{rem}

\medskip
\noindent
$\diamond$ {\sc Step 3.} {\it The atomic case with $\alpha\geq 1$.}

\noindent
The idea of the proof in the atomic case is significantly different and relies not only on assumption \eqref{main1-as1} but also on the fact that $\{\mu_t\}_{t\in[0,T]}$ is locally mass preserving and steadily flowing (recall conditions (MP) and (SF) in the introduction). We proceed by contradiction. Assuming that there exists an atom whose velocity is not concentrated in a Dirac mass we can separate the velocity into two regions. Then, due to property (SF) the atom splits into two measures that are initially arbitrarily close to each other, i.e. close to the diagonal set $\Delta$ (but crucially -- outside of $\Delta$), which is singular in the integral in \eqref{main1-as1}. This leads to a blow-up of the integral in \eqref{main1-as1} and to the contradiction. The main challenge of the proof lies in proving that the splitting Dirac delta does not lose too much mass along the flow. This is where  properties (MP) and (SF) of $\{\mu_t\}_{t\in[0,T]}$ come into play. 


Throughout the proof let $A$ be the full measure set introduced in (SF) (intersected with the full measure set on which $\rho_t$ are well defined) and let $B(M)$ be the maximal velocity domain as set in Definition \ref{weakkinetic}. Aiming at contradiction we suppose that there is an atom at the time $t_0\in A$, position $x_0\in\R^d$ and mass $\rho_0(t_0)$, cf. \eqref{atomandnot}, such that
$\sigma_{t_0,x_0}\in {\mathcal P}(\R^d_v)$ is not a Dirac delta. Then, by a quick construction based on a smooth partition of unity, we can find three smooth, compactly supported functions $0\leq \phi_0,\phi_1,\phi_2\leq 1$, $\sum_{i=0}^2\phi_i\geq \chi_{B(M)}$, with the following properties
\begin{equation}\label{koniec2}
\begin{split}
    \int_{\R^d}\phi_i(v)\dd \sigma_{t_0,x_0}(v) &=: m_i >0 \quad\mbox{for }\ i=1,2,\\
     \int_{\R^d}\phi_0(v)\dd\sigma_{t_0,x_0}(v) &= 1 - m_1- m_2, 
     \\
    {\rm dist}({\rm spt}(\phi_1),{\rm spt}(\phi_2)) &>\delta>0, \quad\mbox{for some }\ \delta>0.
\end{split}
\end{equation}
Recalling the notation from (SF) and \eqref{atomandnot} we note that the first two equations above imply that
\begin{align*}
    \rho_{t_0,t_0}[\phi_i](\{x_0\}) &= \rho_0(t_0) m_i,\qquad i\in\{1,2\},\\ 
    \rho_{t_0,t_0}[\phi_0](\{x_0\}) &= \rho_0(t_0)(1 -  m_1 -  m_2). 
\end{align*}
Denote $m:= \min\{m_1,m_2\}$. 
 The steady flow (SF) of $\mu$ implies that the ${\mathcal M}_+(\R^d)$-valued functions $A\ni t\mapsto \rho_{t_0,t}[\phi_i]$ are narrowly continuous at $t=t_0$. Since the singleton $\{x_0\}$ is a closed set, this translates to the upper-semicontinuity of the functions $A\ni t\mapsto \rho_{t_0,t}[\phi_i](\{x_0\})$ at $t=t_0$. Thus there exists a time interval $[t_0,t_*)$ such that for all $t\in [t_0,t_*)\cap A$ we have
\begin{equation}\label{main1-pst2-1}
\begin{split}
        \rho_{t_0,t}[\phi_i](\{x_0\}) &\leq \rho_0(t_0)\left(m_i + \frac{m}{6}\right),\qquad i\in\{1,2\},\\ 
    \rho_{t_0,t}[\phi_0](\{x_0\}) &\leq \rho_0(t_0)\left(1 -  m_1 -  m_2 + \frac{m}{6}\right).
\end{split}
\end{equation}
Moreover, since $\mu$ is mass-preserving, we have

\begin{align*}
    \sum_{i=0}^2\rho_{t_0,t}[\phi_i](\{x_0\}) \geq \int_{\{x_0\}\times \overbar{B(M)}} \dd T^{t_0,t}_\#\mu_t = \int_{(T^{t_0,t})^{-1}(\{x_0\}\times \overbar{B(M)})} \dd \mu_t\\
    = \int_{(\{x_0\}+(t-t_0)\overbar{B(M)})\times \overbar{B(M)}} \dd \mu_t = \rho_t(\{x_0\}+(t-t_0)\overbar{B(M)}))\geq \rho_{t_0}(\{x_0\}) =  \rho_0(t_0)
\end{align*}

\noindent which together with $\eqref{main1-pst2-1}$ implies that
\begin{equation}\label{main1-pst2-2}
    \begin{split}
    \rho_{t_0,t}[\phi_i](\{x_0\}) &\geq \rho_0(t_0)\left( m_i - \frac{m}{3}\right) \geq \rho_0(t_0)\frac{m}{2},\qquad i\in\{1,2\}.
    \end{split}
\end{equation}

Let
$$\Delta_t = \{(x,v,x',v): (x-x') + (t-t_0)(v-v')= 0\}.$$

\noindent
Now recalling assumption \eqref{main1-as1} we use the pushforward change of variables formula \eqref{changeofvariables} to write
\begin{align*}
   \infty> \int_0^T\int_{\R^{4d}\setminus\Delta}\frac{|v-v'|^{\alpha+2}}{|x-x'|^\alpha}\dd \left[\mu_t\otimes\mu_t\right]\dd t\\
   = \int_0^T\int_{\R^{4d}\setminus \Delta_t}\frac{|v-v'|^{\alpha+2}}{|(x-x') + (t-t_0)(v-v')|^\alpha}\dd \left[T^{t_0,t}_\#\mu_t\otimes T^{t_0,t}_\#\mu_t\right]\dd t\\
    \geq \int_{(t_0,t_*)\cap A}\int_{(\{x_0\}\times B(M))^2\setminus \Delta_t}\frac{|v-v'|^{\alpha+2}}{|(x-x') + (t-t_0)(v-v')|^\alpha}\phi_1(v)\phi_2(v')\dd \left[T^{t_0,t}_\#\mu_t\otimes T^{t_0,t}_\#\mu_t\right]\dd t
\end{align*}

\noindent Observe that, by the third property in \eqref{koniec2}, on $((t_0,t_*)\cap A)\times(\{x_0\}\times B(M))^2$ intersected with the supports of $\phi_1$ and $\phi_2$, we have 
$$0<(t-t_0)\delta<|(x-x')+(t-t_0)(v-v')|\leq 2(t-t_0)M,$$ 
which in particular ensures that in the domain of integration above we have 
$$(\{x_0\}\times B(M))^2\setminus \Delta_t=(\{x_0\}\times B(M))^2.$$
Combining the above inequality with the third property in \eqref{koniec2} we obtain
\begin{align*}
   \infty> \delta^{\alpha+2}\int_{(t_0,t_*)\cap A} \frac{1}{(2(t-t_0)M)^\alpha}  \int_{(\{x_0\}\times B(M))^2} \phi_1(v)\phi_2(v')\dd \left[T^{t_0,t}_\#\mu_t\otimes T^{t_0,t}_\#\mu_t\right]\dd t\\
   = \delta^{\alpha+2}\int_{t_0}^{t_*} \frac{1}{(2(t-t_0)M)^\alpha} \left(\int_{\{x_0\}\times \R^d}\phi_1(v) \dd T^{t_0,t}_\#\mu_t\right)\left( \int_{\{x_0\}\times \R^d}\phi_2(v) \dd T^{t_0,t}_\#\mu_t\right) \dd t\\
   = \delta^{\alpha+2}\int_{t_0}^{t_*} \frac{1}{(2(t-t_0)M)^\alpha} \rho_{t_0,t}[\phi_1](\{x_0\}) \rho_{t_0,t}[\phi_2](\{x_0\}) \dd t\\
   \stackrel{\eqref{main1-pst2-2}}{\geq} \delta^{\alpha+2}(\rho_0(t_0))^2\left(\frac{m}{2}\right)^2\int_{t_0}^{t_*} \frac{1}{(2(t-t_0)M)^\alpha}\dd t = \infty.
\end{align*}
The contradiction indicates that the initial assumption that there exists $t_0\in A$ in which there is a non-monokinetic atom was false. Thus combining all of the information obtained in all of the steps we conclude that for a.a. $t\in[0,T]$ and $\rho_t$-a.a. $x$ the measure $\sigma_{t,x}$ is concentrated in a Dirac delta, and thus $\mu$ is monokinetic for a.a. $t\in[0,T]$ and $\rho_t$-a.a. $x$. 
\end{proof}

\begin{rem}\rm
Observe that the only assumption on $\alpha$ necessary in Step 3 of the above proof is that $\alpha \geq 1$ so that the function $t\mapsto (t-t_0)^{-\alpha}$ is not integrable near $t=t_0$.
\end{rem}

We end this section by sketching the proof of Theorem \ref{main2}. In essence, it amounts to showing that solutions in the sense of Definition \ref{weakkinetic} satisfy assumptions of Theorem \ref{main1}.

\begin{proof}[Proof of Theorem \ref{main2}]
    Since $\mu$ is narrowly continuous, it is easy to show that it is steadily flowing. Then the remaining assumptions of Theorem \ref{main1}, except (MP), follow from item (i) in Definition \ref{weakkinetic} and from Proposition \ref{relaxdef}. We omit the details. The only nontrivial assumption to check is that $\mu$ is locally mass preserving. In the proof of Theorem \ref{main1} we only use condition (MP) in the special case, when $C$ is a singleton. Thus, we shall only prove it in such a case. We do it in the appendix, Proposition \ref{solMP}.
    \end{proof}

\section{Compactness of atomic solutions}\label{sec:compact}

The main purpose of this section is to prove compactness  of the sequence $\left\{\mu^N \right\}_{N=1}^\infty$ of atomic solutions (cf. Definition \ref{empirical}). The goal is to extract a convergent subsequence leading to the mean-field limit. However, as explained in Section \ref{intro:mf}, it is unclear whether it is possible to obtain compactness sufficient for a kinetic mean-field limit. Thus, we settle for a weaker property, which is satisfactory for the purpose of a micro- to macroscopic mean-field limit.
The results of this section introduce the necessity to assume that $\alpha\leq 2$, which together with the previous section restricts the range of admissible  $\alpha$ to $[d,2]$.


\begin{prop}\label{compactness}
Fix $T>0$ and let $\{\mu^N\}_{N=1}^\infty$ be any sequence of atomic solutions (cf. Definition \ref{empirical}), uniformly compactly supported in $(T+1)B(M)\times B(M)$ and $\rho^N$ -- their local density defined in \eqref{localquant}.
Then the following assertions hold.
\begin{enumerate}[label=(\roman*)]
    \item The sequence $\{\mu^N\}_{N=1}^\infty$ is narrowly relatively compact in ${\mathcal M}_+([0,T]\times\R^{2d})$ and any of its limits has the 1D Lebesgue measure as its $t$-marginal. 
    \item The sequence $\{\rho^N\}_{N=1}^\infty$ treated as measure-valued functions on $[0,T]$ is relatively compact in $${C}\left( [0,T]; (\mathcal{P}(\R^d), d_{BL})\right).$$
\end{enumerate}
\end{prop}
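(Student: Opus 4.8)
\textbf{Proof proposal for Proposition \ref{compactness}.}

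The plan is to treat the two assertions separately, obtaining (i) by a soft compactness argument and (ii) by an Arzel\`a--Ascoli argument for measure-valued curves, with the equicontinuity estimate being the real work. For part (i), I would first observe that each $\mu^N$, viewed as a measure on $[0,T]\times\R^{2d}$ via the identification \eqref{mufamily} (so $\mu^N = \mu_t^N\otimes\lambda^1(t)$), has total mass exactly $T$ and is supported in the fixed compact set $[0,T]\times\overbar{(T+1)B(M)}\times\overbar{B(M)}$. Hence $\{\mu^N\}$ is bounded in the total variation norm with uniformly compact support, so by Remark \ref{rem:compact} (Banach--Alaoglu on $C(\Omega)^*$, equivalently Prokhorov via tightness) it is narrowly relatively compact in ${\mathcal M}_+([0,T]\times\R^{2d})$. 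For any narrow limit $\mu$, testing against functions of $t$ alone shows the $t$-marginal of $\mu$ is the narrow limit of the $t$-marginals of the $\mu^N$, each of which is $\lambda^1$ on $[0,T]$; hence the $t$-marginal of $\mu$ is $\lambda^1$. This also legitimises writing the limit in the disintegrated form \eqref{mufamily}.

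For part (ii) I would set up Arzel\`a--Ascoli in $C([0,T];({\mathcal P}(\R^d),d_{BL}))$. The pointwise relative compactness is immediate: for each fixed $t$, the measures $\rho_t^N$ are probability measures supported in the fixed compact $\overbar{(T+1)B(M)}$, hence relatively compact in $({\mathcal P}(\R^d),d_{BL})$, which is moreover complete (Remark \ref{rem:compact}). The crux is uniform equicontinuity in $t$: I must bound $d_{BL}(\rho_t^N,\rho_s^N)$ by a modulus of continuity in $|t-s|$ independent of $N$. Take $\phi\in\Gamma_{BL}$; then, writing $x_i^N$ for the particle trajectories underlying $\mu^N$ and using $\dot x_i^N = v_i^N$ with $|v_i^N|\le M$ by Proposition \ref{flock},
\begin{align*}
    \left|\int_{\R^d}\phi\,\dd\rho_t^N - \int_{\R^d}\phi\,\dd\rho_s^N\right|
    = \left|\frac{1}{N}\sum_{i=1}^N\bigl(\phi(x_i^N(t)) - \phi(x_i^N(s))\bigr)\right|
    \le \frac{1}{N}\sum_{i=1}^N |x_i^N(t) - x_i^N(s)|
    \le M\,|t-s|.
\end{align*}
Taking the supremum over $\phi\in\Gamma_{BL}$ gives $d_{BL}(\rho_t^N,\rho_s^N)\le M|t-s|$, i.e. the family $\{\rho^N\}$ is uniformly Lipschitz (in particular uniformly equicontinuous) on $[0,T]$. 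Arzel\`a--Ascoli for functions valued in the complete metric space $({\mathcal P}(\R^d),d_{BL})$ then yields a subsequence converging in $C([0,T];({\mathcal P}(\R^d),d_{BL}))$, proving (ii).

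The main obstacle is really a matter of bookkeeping rather than a deep difficulty: one must be careful that the uniform bound $M$ on velocities (and hence the Lipschitz-in-time estimate) is genuinely $N$-independent, which is guaranteed by the assumed uniform compact support in $(T+1)B(M)\times B(M)$ together with Proposition \ref{flock}; and one must check the compatibility of the two notions of convergence — narrow convergence of $\mu^N$ on $[0,T]\times\R^{2d}$ from (i) versus $C([0,T];d_{BL})$-convergence of $\rho^N$ from (ii) — namely that along a common subsequence the limit curve $t\mapsto\rho_t$ from (ii) disintegrates the $x$-marginal of the limit measure $\mu$ from (i). This last point follows by testing $\mu^N$ against functions $\chi(t)\phi(x)$ and passing to the limit in both representations, using the uniform-in-$t$ convergence from (ii) to justify interchanging limit and time integral. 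I would record this compatibility explicitly since it is needed in Section \ref{sec:mf}.
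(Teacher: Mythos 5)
Your proposal is correct, and part (i) follows the paper's own route almost verbatim: total variation bound $T$ plus uniform compact support gives narrow relative compactness via Remark \ref{rem:compact}, and testing the limit with functions of $t$ alone identifies the $t$-marginal as $\lambda^1$ (the paper phrases this through the disintegration $\mu=\mu_t\otimes\nu(t)$ and then shows $\nu=\lambda^1$, which is the same computation). Where you genuinely diverge is the equicontinuity step in part (ii). The paper does not use the particle trajectories at all: it tests the weak formulation \eqref{weakeqkinetic} with $\psi_{t_1,t_2,\epsilon}(t)\phi(x)$, where $\psi_{t_1,t_2,\epsilon}$ is the time cut-off \eqref{cutof}, estimates the transport term by $2M(t_2-t_1)$, and lets $\epsilon\to 0$ via the fundamental theorem of calculus to get $d_{BL}(\rho^N_{t_1},\rho^N_{t_2})\le 2M|t_1-t_2|$. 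You instead exploit the atomic structure directly: since $\rho^N_t=\frac1N\sum_i\delta_{x_i^N(t)}$ and $|\dot x_i^N|=|v_i^N|\le M$ by the uniform support bound (Proposition \ref{flock}), each $\phi\in\Gamma_{BL}$ gives $\bigl|\int\phi\,\dd\rho^N_t-\int\phi\,\dd\rho^N_s\bigr|\le M|t-s|$, which is a perfectly valid (and even sharper, constant $M$ rather than $2M$) Lipschitz bound, since the proposition is stated only for atomic solutions. What the paper's route buys is generality and reuse: the cut-off argument applies to any weak solution in the sense of Definition \ref{weakkinetic}, not just empirical measures, and the same $\psi_{t_1,t_2,\epsilon}$ technique is recycled later (proofs of Proposition \ref{relaxdef} and Proposition \ref{lem:asump}); what your route buys is brevity and transparency, at the cost of tying the estimate to the particle representation. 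Your closing remark on compatibility of the two limits (the $C([0,T];d_{BL})$ limit of $\rho^N$ being the $x$-marginal of the narrow limit $\mu$) is not part of the paper's proof of this proposition — it is deferred to Remark \ref{rem:mu=sigmarho} — but recording it does no harm.
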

\begin{proof}
 Narrow relative compactness of $\{\mu^N\}_{N=1}^\infty$ in ${\mathcal M}_+([0,T]\times\R^{2d})$ follows from Banach-Alaoglu theorem due to the uniform compactness of supports of $\mu^N$ and the fact that they are strongly bounded in the $TV$ topology by the constant $T$, see Remark \ref{rem:compact}. In order to prove that any limit $\mu$ of elements in $\{\mu^N\}_{N=1}^\infty$ disintegrates as
 $$ \mu(t,x,v) = \mu_t(x,v)\otimes \lambda^1(t) $$
 we proceed as follows.
 We know, by the narrow convergence $\mu^N\rightharpoonup\mu$, that for all bounded continuous functions $g=g(t,x,v)$ we have 
$$
\int_0^T \int_{\R^{2d}} g(t,x,v) \dd\mu_t^N \dd t\xrightarrow{N \rightarrow \infty} \int_0^T \int_{\R^{2d}} g(t,x,v) \dd \mu.
$$
Let $\pi_t$ be the projection of $(t,x,v)$ onto the time coordinate and $\nu:= (\pi_t)_{\#}\mu$. Then due to the disintegration theorem, we have
\begin{equation}\label{nudef}
    \mu(t,x,v) = \mu_t(x,v) \otimes \nu(t)
\end{equation}
for some probabilistic family $\{\mu_t\}_{t\in[0,T]}$.
It remains to  prove that $\nu = \lambda^1$. To this end, we test the narrow convergence with $g(t,x,v) = \phi(\pi_t(t,x,v)) = \phi(t)$, with an arbitrary continuous function $\phi$, obtaining
\begin{align*}
\int_0^T  \phi(t) \dd t &= \int_0^T \int_{\R^{2d}} g(t,x,v) \dd \mu_t^N \dd t \xrightarrow{N\rightarrow \infty} \int_0^T \int_{\R^{2d}} g(t,x,v) \dd \mu\\
&= \int_0^T  \phi(t) \int_{\R^{2d}}\dd \mu_t(x,v) \otimes \nu(t) = \int_0^T \phi(t) \dd\nu(t).
\end{align*}
Since the last and the first term are independent of $N$ and $\phi$ is arbitrary, we conclude that $\nu(t) = \lambda^1(t)$.
 
 \medskip
 Item (ii) follows immediately from  Arzela-Ascoli theorem, which requires us to prove that $\{\rho^N\}_{N=1}^\infty$ are equicontinuous in the bounded-Lipschitz metric, uniformly bounded and pointwise relatively compact. 
 Once again boundedness and pointwise relative compactness follow from the fact that $\mu_t^N$ (and hence $\rho_t^N$) is probabilistic and compactly supported, as in the proof of item (i).
 
 The remainder of the proof is dedicated to equicontinuity of $\{\rho^N\}_{N=1}^\infty$. Fix $0 < t_1 < t_2 < T$ and $\varepsilon < \min\{ t_1, T-t_2\}$ and let
 
\begin{equation}\label{cutof}
    \psi_{t_1, t_2, \varepsilon}(t) := 
\begin{cases}
0, \quad t\le t_1-\varepsilon \text { or } t\ge t_2 + \varepsilon,\\
1, \quad t_1+\varepsilon \le t \le t_2 - \varepsilon,\\
\text{linear on } |t-t_i|\le \varepsilon, \; i=1,2.
\end{cases}
\end{equation}
\noindent
Furthermore let $\phi=\phi(x)$ be a $C^1$ function with $|\phi|_\infty \le 1$, $[\phi]_{Lip}\le 1$ (thus, it is an admissible test function in the definition of $d_{BL}$). We test the weak formulation \eqref{weakkinetic} for $\mu_t^N$ with $\psi_\epsilon(t)\phi(x)$ (droping $t_1$ and $t_2$ in the subscript of $\psi$ for the sake of clarity) obtaining
\begin{align*}
    0 = \underbrace{\int_{\R^{2d}} \psi_\epsilon(0)\phi(x) \dd \mu_0^N}_{=0} &+
    \int_0^T \int_{\R^{2d}} \partial_t\left( \psi_\epsilon(t)  \phi (x)\right) \dd \mu_t^N \dd t + 
    \int_0^T \int_{\R^{2d}} \nabla_x \left(\psi_\epsilon(t) \phi(x)\right)\cdot v \dd \mu_t^N \dd t \\
     &= \sum\limits_{i=1}^2 \int_{t_i - \varepsilon}^{t_i+\varepsilon} \partial_t \psi_\epsilon(t) \int_{\R^{2d}}\phi(x) \dd \mu_t^N \dd t + \int_{t_1 -\varepsilon}^{t_2 + \varepsilon}\psi_\epsilon(t)  \int_{\R^{2d}} \nabla_x \phi(x)\cdot \,v \dd \mu_t^N \dd t\\
   &= \sum\limits_{i=1}^2 \int_{t_i - \varepsilon}^{t_i+\varepsilon} \partial_t \psi_\epsilon(t) \int_{\R^d}\phi(x) \dd \rho_t^N \dd t + \int_{t_1 -\varepsilon}^{t_2 + \varepsilon}\psi_\epsilon(t)  \int_{\R^{2d}} \nabla_x \phi(x)\cdot \,v \dd \mu_t^N \dd t\\
     &=: \sum\limits_{i=1}^2 I_{1,i} + I_2.
\end{align*}
From the definition of $\psi$ we infer that $\partial_t \psi_\epsilon =(2\varepsilon)^{-1}$ on $(t_1 - \varepsilon, t_1+\varepsilon)$ and $-(2\varepsilon)^{-1}$ on $(t_2 - \varepsilon, t_2+\varepsilon)$. Note that the function $t\mapsto \int_{\R^d} \phi \dd \rho_t^N$ is continuous, since $\mu^N_t$ is narrowly continuous, and $\phi$ is locally bounded and continuous. 
Thus using the fundamental theorem of calculus, we obtain
\begin{equation}\label{I12}
I_{1,1} + I_{1,2} = \frac{1}{2\varepsilon}  \int_{t_1 - \varepsilon}^{t_1+\varepsilon} \int_{\R^d} \phi\,  d\rho_t^N \dd t - \frac{1}{2\varepsilon}  \int_{t_2 - \varepsilon}^{t_2+\varepsilon} \int_{\R^d} \phi\,  d\rho_t^N \dd t \xrightarrow{\varepsilon\rightarrow 0} 
\int_{\R^d} \phi \dd \rho_{t_1}^N - \int_{\R^d} \phi \dd \rho_{t_2}^N
\end{equation}
for all $t_1<t_2\in(0,T)$.
On the other hand, since $\mu^N_t$ are probabilistic with supports bounded in terms of $M$, and $|\nabla_x\phi|_\infty\leq 1$, we have
\begin{align}\label{I2}
|I_2|&\le \int_{t_1 -\varepsilon}^{t_2 + \varepsilon} \int_{\R^{2d}} |v| |\nabla_x \phi| \dd \mu_t^N \dd t \le
M \left(t_2 - t_1 + 2 \varepsilon \right)
\leq 2M \left(t_2 - t_1\right)
\end{align}
for sufficiently small $\varepsilon>0$.
Combining \eqref{I12} with \eqref{I2}, we find
\begin{align*}
\left|\int_{\R^d} \phi \dd \rho_{t_1}^N - \int_{\R^d} \phi \dd \rho_{t_2}^N \right| &\le 2M  |t_1-t_2|.
\end{align*}
Taking the supremum over all such admissible functions $\phi$ yields
\begin{align*}
  d_{BL}(\rho_{t_1}^N, \rho_{t_2}^N) &\le 2M |t_1-t_2|
\end{align*}
 for all $t_1<t_2\in(0,T)$. Thus the family of measure-valued functions $\rho^N$ is uniformly Lipschitz continuous in $(0,T)$ with values in $({\mathcal P}(\R^d),d_{BL})$. Each of these functions is continuous on the whole $[0,T]$, which ensures that the uniform Lipschitz continuity holds on the whole $[0,T]$ too. This yields the required equicontinuity and the proof is finished.
\end{proof}

Finally, since the interaction term in \eqref{weakeqkinetic} contains integration with respect to the measure $[\mu_t \otimes \mu_t] \otimes \lambda^1$, we need to be able to pass to the limit with terms of this type as well. Even though such terms seem nonlinear and the framework is not regular,  somewhat surprisingly, the following proposition holds true.

\begin{prop}\label{lem:mutimesmu}
Under the assumptions of Proposition \ref{compactness} let $\mu^{N_k}$ be a subsequence convergent to $\mu\in {\mathcal M}_+([0,T]\times\R^{2d})$.
Then, up to a subsequence, $[\mu^{N_k}_t\otimes\mu^{N_k}_t]\otimes\lambda^1(t)$ converges narrowly to some $\lambda\in{\mathcal M}_+([0,T]\times\R^{4d})$, which can be disintegrated as 
$$
[\mu_t^{N_k} (x,v) \otimes \mu_t^{N_k} (x',v')] \otimes dt \xrightharpoonup{k \rightarrow \infty} \lambda(t,x,v,x',v') =  [\mu_t(x,v) \otimes \mu_t(x', v')] \otimes \lambda^1(t).
$$
\end{prop}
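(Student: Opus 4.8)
The plan is to first secure narrow relative compactness of the sequence $[\mu^{N_k}_t\otimes\mu^{N_k}_t]\otimes\lambda^1(t)$ by the same Banach--Alaoglu argument as in Proposition \ref{compactness}(i): these measures live on the fixed compact $[0,T]\times((T+1)B(M)\times B(M))^2$ and are uniformly bounded in total variation by $T$, so some subsequence converges narrowly to a measure $\lambda\in{\mathcal M}_+([0,T]\times\R^{4d})$. Exactly as in the proof of Proposition \ref{compactness}(i), testing with functions depending only on $t$ shows the $t$-marginal of $\lambda$ is $\lambda^1$, so the disintegration Theorem \ref{disintegration} applies and yields $\lambda(t,x,v,x',v')=\lambda_t(x,v,x',v')\otimes\lambda^1(t)$ for a probabilistic family $\{\lambda_t\}$. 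The whole content of the proposition is therefore the identification $\lambda_t=\mu_t\otimes\mu_t$ for a.a.\ $t$.

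To identify $\lambda_t$, the idea is to exploit that the $t$-continuity of $\rho^N$ from Proposition \ref{compactness}(ii) forces enough rigidity. First I would pass, along a further subsequence, to the limit $\rho^{N_k}\to\rho$ in $C([0,T];({\mathcal P}(\R^d),d_{BL}))$, and note $\rho_t$ is the $x$-marginal of $\mu_t$. The key auxiliary fact is that for a \emph{fixed} $t$, narrow convergence $\mu^{N_k}_t\rightharpoonup\mu_t$ in ${\mathcal P}(\R^{2d})$ would immediately give $\mu^{N_k}_t\otimes\mu^{N_k}_t\rightharpoonup\mu_t\otimes\mu_t$ by continuity of the product map on compactly supported probability measures (Remark \ref{rem:compact}); the subtlety is that a priori we only control $\mu^{N_k}$ as a space-time measure, not slicewise. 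To get slicewise control I would use a Lebesgue-point / mollification argument in time: for $\phi\in C_b(\R^{4d})$ and $\chi\in C_c((0,T))$, the functional $\int_0^T\chi(t)\big(\int\phi\,\dd[\mu^{N_k}_t\otimes\mu^{N_k}_t]\big)\dd t$ converges to $\int_0^T\chi(t)\big(\int\phi\,\dd\lambda_t\big)\dd t$; on the other hand one wants to compare this with $\int_0^T\chi(t)\big(\int\phi\,\dd[\mu_t\otimes\mu_t]\big)\dd t$. The comparison is done by approximating $\phi$ by finite sums $\sum_j g_j(x,v)h_j(x',v')$ (Stone--Weierstrass on the compact support), reducing to terms $\int_0^T\chi(t)\big(\int g_j\dd\mu^{N_k}_t\big)\big(\int h_j\dd\mu^{N_k}_t\big)\dd t$, and then using the equicontinuity in time of $t\mapsto\int g_j\dd\mu^{N_k}_t$ and $t\mapsto\int h_j\dd\mu^{N_k}_t$ — which follows from the same weak-formulation computation that gave Lipschitz continuity of $\rho^N$ in Proposition \ref{compactness}, now with a general compactly supported $C^1$ test function in $x$ (the extra $v$-dependence of $g_j,h_j$ is harmless since $v$ ranges over the fixed compact $B(M)$ and one simply does not integrate it against the velocity field). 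Equicontinuity plus pointwise relative compactness let me apply Arzel\`a--Ascoli to each scalar function $t\mapsto\int g_j\dd\mu^{N_k}_t$, pass to a uniformly convergent subsequence, pass to the limit inside the product and the time integral, and conclude $\int\phi\,\dd\lambda_t=\int\phi\,\dd[\mu_t\otimes\mu_t]$ for a.a.\ $t$ and all $\phi$ in a countable dense set, hence $\lambda_t=\mu_t\otimes\mu_t$ a.e.

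\textbf{Main obstacle.} The genuine difficulty is the passage to the limit in the \emph{bilinear} (product-in-itself) terms $\int_0^T\chi(t)\big(\int g\dd\mu^{N_k}_t\big)\big(\int h\dd\mu^{N_k}_t\big)\dd t$: narrow convergence of $\mu^{N_k}$ as a space-time measure does \emph{not} by itself commute with such products, since it only gives convergence of linear functionals. This is precisely where the time-equicontinuity of the slice-integrals is indispensable — it upgrades the space-time narrow convergence to a form of strong (uniform-in-time) convergence of the scalar functions $t\mapsto\int g\dd\mu^{N_k}_t$, after which the product of two uniformly convergent bounded functions converges uniformly and the limit can be taken freely. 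Everything else (compactness, identification of the $t$-marginal, the disintegration bookkeeping) is routine given the tools already assembled in Section \ref{sec:prelim} and in Proposition \ref{compactness}.
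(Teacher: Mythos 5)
Your skeleton (Banach--Alaoglu compactness, identification of the $t$-marginal, disintegration, Stone--Weierstrass reduction to products $g(x,v)h(x',v')$, then passing to the limit in $\int_0^T\chi(t)\bigl(\int g\,\dd\mu^{N_k}_t\bigr)\bigl(\int h\,\dd\mu^{N_k}_t\bigr)\dd t$ via some strong-in-time convergence of the slice integrals) is exactly the paper's strategy. But the step you single out as the crux is where your argument has a genuine gap: you claim that $t\mapsto\int g\,\dd\mu^{N_k}_t$ is equicontinuous "by the same weak-formulation computation that gave Lipschitz continuity of $\rho^N$", asserting that the $v$-dependence of $g$ is harmless. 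It is not. The Lipschitz estimate for $\rho^N_t$ in Proposition \ref{compactness} works precisely because the test function there depends only on $x$, so $\nabla_v\phi\equiv 0$ and the singular alignment term drops out, leaving only the transport term bounded by $M$. Once $g$ depends on $v$, the time derivative of $\int g\,\dd\mu^{N_k}_t$ contains the symmetrized alignment term, which is bounded only by $[\nabla_v g]_{Lip}\bigl(D[\mu^{N_k}_t]+|x-x'|^{2-\alpha}\text{-terms}\bigr)$, and $D[\mu^{N_k}_t]$ is controlled solely in $L^1([0,T])$ through the energy identity (Proposition \ref{eneq}); it admits no uniform-in-$N$ pointwise-in-$t$ bound for the strongly singular weight. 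Hence there is no equicontinuity, and Arzel\`a--Ascoli cannot be applied to the scalar functions $t\mapsto\int g\,\dd\mu^{N_k}_t$; the uniform convergence you rely on to pass to the limit in the products is unavailable.

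The paper circumvents exactly this obstruction: from the particle dynamics it derives a uniform bound on $\bigl\|\tfrac{d}{dt}\int g\,\dd\mu^{N_k}_t\bigr\|_{L^1([0,T])}$ (using the energy equality to integrate $D[\mu^{N_k}_t]$ in time), i.e.\ a uniform $W^{1,1}([0,T])$ bound, and then uses the compact embedding of $W^{1,1}([0,T])$ into $L^2([0,T])$ together with a diagonal argument over a countable dense family of $C^2$ test functions. This yields strong $L^2([0,T])$ (not uniform) convergence of the slice integrals, identified with $\int g\,\dd\mu_t$ via the space-time narrow convergence; strong $L^2\times L^2$ convergence then gives strong $L^1$ convergence of the products, which suffices when tested against $\psi\in C([0,T])\subset L^\infty=(L^1)^*$, after which Stone--Weierstrass concludes as in your outline. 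If you replace your Arzel\`a--Ascoli step by this BV-in-time/compact-embedding argument, your proof becomes essentially the paper's.
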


\begin{proof}
Narrow relative compactness of  $\{[\mu_t^{N_k} \otimes \mu_t^{N_k}] \otimes \lambda^1\}_{k\in{\mathbb N}}$ follows from Proposition \ref{compactness}, and therefore there exists a subsequence (not relabeled) and a measure $\lambda \in \mathcal{M}_+([0,T]\times \R^{4d})$ such that 
$$
[\mu_t^{N_k}\otimes \mu_t^{N_k}] \otimes \lambda^1(t) \xrightharpoonup{k \rightarrow \infty} \lambda.
$$
Then, again by Proposition \ref{compactness}, disintegrating $\lambda$ with respect to $t$, we obtain 
$$
\lambda(t,x,v,x',v') = \lambda_t (x,v,x',v') \otimes \lambda^1(t).
$$
Therefore the only nontrivial part is to prove that for a.a. $t\in[0,T]$ we have 
\begin{equation}\label{nielin}
    \lambda_t(x,v,x',v') = \mu_t(x,v) \otimes \mu_t(x',v').
\end{equation}
Denote the uniform superset of all supports of $\mu^{N_k}_t$ and $\mu_t$ as $\Omega \subset\subset (T+1)B(M)\times B(M)$. We shall prove \eqref{nielin} by integrating against arbitrary functions in $C^2(\Omega)$, or rather, in its countable dense subset ${\mathcal D}$. Fix any 
$$
\phi \in {\mathcal D} \subset C^2_c(\Omega),\qquad \|\phi\|_{C^2}\leq 1.
$$
Consider the integral
$$ \int_{\R^{2d}}\phi\dd\mu_t^{N_k} = \frac{1}{N}\sum_{i=1}^N\phi(x_i^{N_k}(t),v_i^{N_k}(t)), $$
where $(x_i^{N_k},v_i^{N_k})_{i=1}^{N_k}$ is the solution of the particle system \eqref{micro} corresponding to $\mu^{N_k}$. Since each such solution is smooth, the above integrals are differentiable with respect to $t$.
Following the standard calculations exploiting the usual symmetrization trick, we obtain 
\begin{align*}
   \left| \frac{d}{dt}\int_{\R^{2d}}\phi\dd\mu_t^{N_k} \right| 
   &\le \frac{1}{N_k}\sum\limits_{i=1}^{N_k} \left| \nabla_x \phi \right| \left| v_i^{N_k}\right| + 
   \frac{1}{2N_k^2}\sum\limits_{i\neq j=1}^{N_k} \frac{\left|\nabla_v\phi(x_i^{N_k}, v_i^{N_k}) - \nabla_v\phi(x_j^{N_k}, v_j^{N_k})\right| |v_j^{N_k}-v_i^{N_k}|}{|x_i^{N_k} - x_j^{N_k}|^\alpha} \\
   &\le M\|\nabla_x \phi\|_\infty + \frac{1}{2N_k^2}[\nabla_v \phi]_{Lip} \sum\limits_{i \neq j=1}^{N_k} |x_i^{N_k} - x_j^{N_k}|^{2-\alpha}+ \frac{1}{2N_k^2}[\nabla_v \phi]_{Lip} \sum\limits_{i \neq j=1}^{N_k} \frac{|v_i^{N_k}-v_j^{N_k}|^2}{|x_i^{N_k}-x_j^{N_k}|^{\alpha}} \\
   &\le M + \frac{1}{2}(2(T+1)M)^{2-\alpha} + \frac{1}{2}D[\mu^{N_k}_t],
\end{align*}
where the last line follows from the fact that $\|\phi\|_{C^2}\leq 1$ and from uniform compactness of supports of $\mu^N$. Integrating with respect to time in $[0,T]$ and using Proposition \ref{flock} we find
$$ \left\|\frac{d}{dt}\int_{\R^{2d}}\phi\dd\mu^{N_k}_t\right\|_{L^1([0,T])} \leq TM + \frac{T}{2}(2(T+1)M)^{2-\alpha} + \frac{1}{2}E[\mu^{N_k}_0]\leq TM + \frac{T}{2}(2(T+1)M)^{2-\alpha}+\frac{M^2}{2}.$$
Thus the family of functions $t\mapsto \int_{\R^{2d}}\phi\dd\mu^{N_k}_t$ is bounded in $W^{1,1}([0,T])$, and thus, it is compactly embedded in $L^2([0,T])$. Passing to a subsequence, for our fixed $\phi$, we have the convergence
$$
\int_{\R^{2d}}\phi \dd\mu^{N_{k_1}}_t\longrightarrow \psi_\phi(t)\quad \text{in}\quad L^2([0,T])
$$
where $\psi_\phi$ is some $\phi$-dependent limit. Using a diagonal argument we construct the subsequence $N_{k_\infty}$, such that
$$
\forall \phi \in {\mathcal D},\ \|\phi\|_{C^2}\leq 1 \quad\mbox{we have}\quad \int_{\R^{2d}}\phi \dd\mu^{N_{k_\infty}}_t \longrightarrow \psi_\phi(t) \text{ in }L^2([0,T]).
$$
Here we come back to the narrow convergence of the original series $\mu^{N_k}\rightharpoonup\mu$. For $\psi \in C([0,T])$, we have
$$
\int_0^T \psi(t)\int_{\R^{2d}} \phi\dd\mu^{N_{k_\infty}}_t \dd t = \int_0^T\int_{\R^{2d}} \psi(t) \phi(x,v) \dd \mu_t^{N_{k_\infty}} \dd t \xrightarrow{N_{k_\infty} \rightarrow \infty}
\int_0^T \psi(t) \int_{\R^{2d}} \phi \dd \mu_t \dd t.
$$
Since $\psi$ belongs to a dense subspace $C([0,T])$ of $L^2([0,T])$, by uniqueness of the weak limit, we deduce that in fact 
$$
\psi_\phi(t) = \int_{\R^{2d}}\phi\dd\mu_t\quad \mbox{as elements of }\ L^2([0,T])\ \mbox{ for all } \phi\in{\mathcal D}.
$$

Fix two functions $\phi_1, \phi_2\in {\mathcal D}$ with $\|\phi_i\|_{C^2}\leq 1$. The sequences of functions $t\mapsto \int_{\R^d}\phi_i\dd\mu^{N_{k_\infty}}_t$, $i\in\{1,2\}$, are strongly convergent in $L^2([0,T])$, and hence their product is strongly (and weakly) convergent in $L^1([0,T])$. Since $C([0,T]) \subset L^\infty([0,T]) = L^1([0,T])^*$, we infer that for all $\psi\in C([0,T])$ we have

\medskip
\begin{align*}
\int_0^T \int_{\R^{4d}} \psi(t) \phi_1(x,v) \phi_2(x',v') \dd \left[\mu_t^{N_{k_\infty}}(x,v)\otimes \mu_t^{N_{k_\infty}}(x',v')\right] \dd t =
\int_0^T \psi(t) \int_{\R^{2d}}\phi_1\dd\mu^{N_{k_\infty}}_t \int_{\R^{2d}}\phi_2\dd \mu^{N_{k_\infty}}_t\dd t \\
\rightarrow \int_0^T \psi(t) \int_{\R^{2d}}\phi_1\dd\mu_t \int_{\R^{2d}}\phi_2\dd \mu_t\dd t = 
\int_0^T \int_{\R^{4d}} \psi(t) \phi_1(x,v) \phi_2(x',v') \dd\left[\mu_t(x,v)\otimes \mu_t(x',v')\right]\dd t.
\end{align*}

\medskip
By a standard density argument, the above convergence extends to all $\phi_1, \phi_2$ in $C_c(\Omega)$. We need this convergence for arbitrary function $g(t,x,v,x',v')\in C([0,T]\times\Omega^2)$. By Stone-Weierstrass theorem, finite linear combinations of products $\psi(t) \phi_1(x,v) \phi_2(x',v')$ are dense in $C([0,T]\times\Omega^2)$, so we can further extend our result. Finally we arrive at the conclusion that
$$
\int_0^T \int_{\R^{4d}} g(t,x,v,x',v') \dd \left[\mu_t^{N_{k_\infty}}(x,v)\otimes \mu_t^{N_{k_\infty}}(x',v')\right] \dd t \rightarrow \int_0^T \int_{\R^{4d}} g(t,x,v,x',v') \dd [\mu_t(x,v)\otimes \mu_t(x',v')] \dd t 
$$
for all $g\in C([0,T]\times\Omega^2)$. Due to the uniqueness of the narrow limit we conclude that
$$ \lambda = [\mu_t(x,v)\otimes \mu_t(x',v')]\otimes \lambda^1(t)  $$
and the proof is finished.
\end{proof}

\section{Micro- to macroscopic mean-field limit}\label{sec:mf}
The main goal of this section is to prove the last main result Theorem \ref{main3}. The first issue is to prepare the initial data for the mean-field approximation.
Following assumptions of Theorem \ref{main3}, fix $\rho_0 \in \mathcal{P}(\R^d)$ with bounded support compactly contained in the ball $B(M)$ (where $M$ is sufficiently large) and $u_0\in L^\infty(\rho_0)$ with $\|u_0\|_{L^\infty(\rho_0)}\leq M$. 
Our first goal is to approximate $\rho_0$ (and $u_0$) by a sequence of empirical measures. On the one hand such approximation results are well known and boil down to the law of large numbers. On the other hand our setting is non-standard. In particular the function $u_0$ is defined $\rho_0$-a.e., which makes it unclear if we can evaluate it in points emerging in the empirical measures. We perform the approximation in Lemma \ref{approx} in the appendix ensuring the existence of 
\begin{equation}\label{approxini}
    \begin{aligned}
    \rho_0^N(x) &:= \frac{1}{N} \sum\limits_{i=1}^N\delta_{x_{i0}^N}(x) \xrightharpoonup{N \rightarrow \infty} \rho_0, & x_{i0}^N\neq x_{j0}^N\in\R^d \text{ for }i\neq j,\\
m_0^N(x) &:= \frac{1}{N} \sum_{i=1}^N v_{i0}^N\delta_{x_{i0}^N}(x) \xrightharpoonup{N \rightarrow \infty} u_0\rho_0, & v_{i0}^N\in\R^d,
\end{aligned}
\end{equation}
where $m_0^N$ and $u_0\rho_0$ are understood as vector measures in ${\mathcal M}(\R^d)$. Then 
$\left(x_{i0}^N, v_{i0}^N \right)_{i=1}^N \in \R^{2dN}$ serve as good, non-collisional initial data for the particle system \eqref{micro}. Let $(x_i^N(t),v_i^N(t))_{i=1}^N$ be the corresponding  unique, smooth, non-collisional solution granted by Theorem \ref{exist_micro}.
Following Definition \ref{empirical}, let $\mu^N \in \mathcal{M}\left( [0,T] \times \R^{2d} \right)$ be the associated atomic solution. Based on the properties of solutions to the particle system (see Proposition \ref{flock} and \ref{eneq}), both compactness results from Section \ref{sec:compact} apply to  $\{\mu^N\}_{N=1}^\infty$. Additionally, the following lemma holds true. 

\begin{lem}\label{lem:zbiegze}
    Suppose $\mu^N$, $\mu$ are given as above and $\mu^N$ are uniformly (with respect to $N$) compactly supported, i.e. there exists $M>0$ such that for each $N$ and $t\in[0,T]$
    $$
    {\rm spt}(\mu_t^N) \subset\subset (T+1)B(M)\times B(M).
    $$
    Then $\mu$ is uniformly compactly supported in $(T+1)B(M)\times B(M)$ and there exists a subsequence $\{ \mu_t^{N_k}\}_{k=1}^\infty$, such that
        $$
        E[\mu_t^{N_k}] \xrightarrow{ k \rightarrow \infty} E[\mu_t] \text{ for all }t\in A_E,
        $$
        where $E$ is the kinetic energy introduced in Section \ref{sec:micro} and $A_E$ is some full measure subset of $[0,T]$.
\end{lem}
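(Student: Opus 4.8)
The plan is to prove the two assertions separately, the first being a routine consequence of the stability of supports under narrow convergence and the second relying on an upgrade of weak to pointwise convergence made possible by energy dissipation.

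For uniform compact support of $\mu$, fix a compact set $K\subset\subset (T+1)B(M)\times B(M)$ containing $\mathrm{spt}(\mu_t^N)$ for every $N$ and a.a.\ $t$, and for $k\in\mathbb{N}$ choose $h_k\in C_b(\R^{2d})$ nonnegative, vanishing on $K$ and strictly positive on $\{(x,v):\mathrm{dist}((x,v),K)>1/k\}$ (e.g.\ a truncated distance function). Testing the narrow convergence $\mu^N\rightharpoonup\mu$ with $g(t,x,v)=\zeta(t)h_k(x,v)$ for arbitrary nonnegative $\zeta\in C([0,T])$, and using $\int_{\R^{2d}}h_k\,\dd\mu_t^N=0$, gives $\int_0^T\zeta(t)\int_{\R^{2d}}h_k\,\dd\mu_t\,\dd t=0$; since $\zeta$ is arbitrary and the inner integral is nonnegative, $\int_{\R^{2d}}h_k\,\dd\mu_t=0$ for a.a.\ $t$. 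Intersecting over $k\in\mathbb{N}$ yields $\mathrm{spt}(\mu_t)\subset K$ for a.a.\ $t$, which is the first claim.

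For the convergence of the kinetic energy, since $|v|^2$ is unbounded I would first fix $\chi\in C_b(\R^{2d})$ with $\chi(x,v)=|v|^2$ on $K$; then $E[\mu_t^N]=\int_{\R^{2d}}\chi\,\dd\mu_t^N$ for all $N,t$, and, by the first part, $E[\mu_t]=\int_{\R^{2d}}\chi\,\dd\mu_t$ for a.a.\ $t$. Writing $e^N(t):=E[\mu_t^N]$ and $e(t):=E[\mu_t]$, testing $\mu^N\rightharpoonup\mu$ with $\zeta(t)\chi(x,v)$ shows $\int_0^T\zeta e^N\,\dd t\to\int_0^T\zeta e\,\dd t$ for all $\zeta\in C([0,T])$, and since $0\le e^N\le M^2$ uniformly (Proposition \ref{flock}) this extends to all $\zeta\in L^1([0,T])$, i.e.\ $e^N\rightharpoonup e$ weak-$\ast$ in $L^\infty([0,T])$.

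The decisive step — and the only real obstacle, since weak-$\ast$ convergence by itself does not give pointwise a.e.\ convergence — is the observation that each $e^N$ is monotone non-increasing in $t$: by the energy equality (Proposition \ref{eneq}, equivalently Proposition \ref{relaxdef}), $E[\mu_t^N]=E[\mu_0^N]-\int_0^t D[\mu_s^N]\,\dd s$ with $D[\mu_s^N]\ge 0$. Thus $\{e^N\}$ is a uniformly bounded sequence of monotone functions, and Helly's selection theorem provides a subsequence $e^{N_k}$ converging at every $t\in[0,T]$ to some monotone $\bar e$. For $\zeta\in C([0,T])$ dominated convergence gives $\int_0^T\zeta e^{N_k}\,\dd t\to\int_0^T\zeta\bar e\,\dd t$, while the weak-$\ast$ convergence gives $\int_0^T\zeta e^{N_k}\,\dd t\to\int_0^T\zeta e\,\dd t$; hence $\bar e=e$ a.e. Taking $A_E$ to be a full-measure subset of $[0,T]$ on which $E[\mu_t]$ is defined via the disintegration and $\bar e(t)=E[\mu_t]$, we conclude that $E[\mu_t^{N_k}]=e^{N_k}(t)\to\bar e(t)=E[\mu_t]$ for every $t\in A_E$, as required.
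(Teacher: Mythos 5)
Your proof is correct, but the mechanism you use for the crucial step — upgrading weak convergence of $t\mapsto E[\mu^N_t]$ to pointwise a.e.\ convergence — is genuinely different from the paper's. The support statement you prove exactly as the paper indicates (testing the narrow convergence with admissible functions localised outside $(T+1)B(M)\times B(M)$), so there is nothing to compare there. For the energy, the paper simply reuses the argument of Proposition \ref{lem:mutimesmu}: for every $\phi\in C^2_c$ the functions $t\mapsto\int_{\R^{2d}}\phi\,\dd\mu^N_t$ are bounded in $W^{1,1}([0,T])$ (their time derivatives being controlled by $D[\mu^N_t]$ plus the harmless term $|x-x'|^{2-\alpha}$), hence strongly precompact in $L^2([0,T])$, which gives a.e.\ convergence along a subsequence, and the limit is identified with $\int\phi\,\dd\mu_t$ via the narrow convergence; taking $\phi=|v|^2$ (admissible thanks to the uniform compact supports) finishes the proof. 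You instead exploit the special structure of the observable $|v|^2$: the microscopic energy equality (Proposition \ref{eneq}) makes $e^N(t)=E[\mu^N_t]$ monotone non-increasing and uniformly bounded, so Helly's selection theorem yields an everywhere-convergent subsequence, whose limit you correctly identify with $E[\mu_t]$ a.e.\ through the weak-$*$ limit and a du Bois-Reymond argument. Your route is more elementary — no $W^{1,1}$ bound, no compact Sobolev embedding — but it is tailored to the kinetic energy alone, whereas the paper's argument works for all $C^2_c$ observables at once, a generality it needs anyway for the product convergence in Proposition \ref{lem:mutimesmu}; reusing that machinery is what makes the lemma a one-line corollary in the paper. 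Both approaches deliver the same conclusion and an equally good full-measure set $A_E$.
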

\begin{proof}
   The desired inclusion of ${\rm spt}(\mu^N_t)$ follows from the properties of narrow convergence by testing with an admissible function localised outside the set $(T+1)B(M)\times B(M)$. The proof of convergence of the energy is essentially the same as in the first part of Proposition \ref{lem:mutimesmu}, where we established that
   $$
   \int_{\R^{2d}}\phi \dd \mu_t^{N} \rightarrow \int_{\R^{2d}}\phi \dd \mu_t \quad\mbox{ in }\ L^2([0,T])\ \mbox{ and thus also for a.a. }\ t\in[0,T]
   $$
   (up to a subsequence) for all $\phi(x,v) \in C^2_c$. Recalling the definition of $E[\mu_t^N]$ and taking $\phi=|v|^2$ (which is an admissible function due to uniform compactness of all measures), we conclude the proof.
   
\end{proof}

Based on the above lemma and results from Section \ref{sec:compact}, we construct our approximative solutions and their limit in accordance with the following definition.

\begin{definition}[\textbf{Approximative solutions}]\label{def:MFL}
Let $T>0$ be fixed. For a given bounded, compactly supported $\rho_0\in \mathcal{P}(\R^{d})$ and $u_0 \in L^\infty(\rho_0)$, $u_0: \R^d_x \mapsto \R^d$,  we define initial approximation of the form \eqref{approxini} (see Lemma \ref{approx} in the appendix for the construction). Moreover for each $N\in{\mathbb N}$ let $(x_i^N(t), v_i^N(t))_{i=1}^N$ and $\{\mu^N\}_{N=1}^\infty$ be the  solution of the particle system \eqref{micro} subject to initial data $(x_{i0}^N, v_{i0}^N)_{i=1}^N$ and the corresponding atomic solution, respectively.
Using Lemma \ref{lem:zbiegze} and the results of Section \ref{sec:compact}, we extract (without relabelling) a subsequence of $\left\{ \mu^N \right\}_{N=1}^\infty$ satisfying all of the following properties:
\begin{align*}
     \mu^N(t,x,v)&\xrightharpoonup{N \rightarrow \infty} \mu(t,x,v)= \mu_t(x,v) \otimes \lambda^1(t); \\
     \left[ \mu_t^N(x,v) \otimes \mu_t^N(x',v')\right] \otimes \lambda^1(t) &\xrightharpoonup{N \rightarrow \infty} \left[ \mu_t(x,v)\otimes \mu_t(x',v')\right]\otimes \lambda^1(t); \\
     \int_{\R^d_v} \dd \mu_t^N(x,v) =: \rho^N_t(x) &\xrightarrow{N \rightarrow \infty} \rho_t(x) \text{ in }C\left([0,T]; (\mathcal{M}_+(\R^d), d_{BL})\right);\\
     E[\mu_t^N] &\xrightarrow{N\rightarrow\infty} E[\mu_t] \text{ on } A_E,
\end{align*}
where $A_E$ is defined in Lemma \ref{lem:zbiegze}.
Moreover, $\mu_t$ is compactly supported for each $t\in[0,T]$, with bounds on support inherited from $\mu_t^N$. 
\end{definition}

\begin{rem}\label{rem:mu=sigmarho}\rm
    With the notation as above, for a.a. $t\in[0,T]$ the measure $\mu$ disintegrates as in \eqref{mudis}.  More importantly the $x$-marginal of $\mu_t$ defined as the local density $\rho_t$ in \eqref{localquant}, coincides for a.a. $t\in[0,T]$ with the limit of $\rho^N_t$. The proof exploits both the narrow convergence $\mu^N\rightharpoonup\mu$ and the convergence of $t\mapsto\rho_t^N$, and follows similarly to the proof that $\nu=\lambda^1$ in \eqref{nudef}, Proposition \ref{compactness}. We omit the details. 
\end{rem}

\subsection{Monokineticity of the limit family $\left\{ \mu_t \right\}_{t\in [0,T]}$}

Next goal in this section is to establish that the family $\left\{ \mu_t \right\}_{t\in [0,T]}$ from Definition \ref{def:MFL} fits into the framework of Theorem \ref{main1} and is therefore monokinetic. 

\begin{prop}\label{prop:m-p}
The family of measures $\{\mu_t\}_{t\in[0,T]}$ from Definition \ref{def:MFL} is locally mass preserving.
\end{prop}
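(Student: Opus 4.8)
The plan is to verify the defining inequality of property (MP), namely $\rho_t(\overbar{C} + (t-t_0)\overbar{B(M)}) \geq \rho_{t_0}(C)$ for every Borel set $C\subset\R^d$ and all $t_0\leq t$ in $[0,T]$, by transferring the corresponding exact statement from the particle level (where it is essentially a kinematic fact) through the narrow limit. The key observation at the particle level is that velocities are bounded by $M$ (Proposition \ref{flock}), so each trajectory satisfies $|x_i^N(t) - x_i^N(t_0)| \leq (t-t_0)M$; hence if $x_i^N(t_0)\in C$ then $x_i^N(t)\in C + (t-t_0)\overbar{B(M)}$. Summing over the relevant indices, for the approximating densities $\rho_t^N$ defined in Definition \ref{def:MFL} one gets, for \emph{open} sets $U$,
\begin{equation*}
    \rho_t^N\big(U + (t-t_0)\overbar{B(M)}\big) \;\geq\; \rho_{t_0}^N(U),
\end{equation*}
and more robustly, for closed sets $F$, $\rho_t^N(F + (t-t_0)\overbar{B(M)}) \geq \rho_{t_0}^N(F)$, since $F + (t-t_0)\overbar{B(M)}$ is closed and contains the image under the flow of the points of $F$.

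The second step is to pass to the limit $N\to\infty$. From Definition \ref{def:MFL} we have $\rho_t^N \to \rho_t$ in $C([0,T];(\mathcal M_+(\R^d),d_{BL}))$, so in particular $\rho_t^N \rightharpoonup \rho_t$ narrowly \emph{for every fixed} $t$ (not just a.e. $t$), and by Remark \ref{rem:mu=sigmarho} this limit agrees with the disintegration density $\rho_t$ for a.a. $t$. Narrow convergence gives $\liminf_N \rho_t^N(V) \geq \rho_t(V)$ for open $V$ and $\limsup_N \rho_t^N(K) \leq \rho_t(K)$ for compact (hence closed, by uniform compact support) $K$ — the standard portmanteau inequalities. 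Applying the open-set inequality to $V = \mathrm{int}(\overbar{C} + (t-t_0)\overbar{B(M)})$ on the left and the closed-set inequality to $K = \overbar{C}$ on the right, together with the particle-level bound, yields
\begin{equation*}
    \rho_t\big(\mathrm{int}(\overbar{C} + (t-t_0)\overbar{B(M)})\big) \;\geq\; \liminf_N \rho_t^N\big(\overbar{C} + (t-t_0)\overbar{B(M)}\big) \;\geq\; \limsup_N \rho_{t_0}^N(\overbar C) \;\geq\; \rho_{t_0}(\overbar C) \;\geq\; \rho_{t_0}(C).
\end{equation*}
Since $\mathrm{int}(\overbar{C} + (t-t_0)\overbar{B(M)}) \subset \overbar{C} + (t-t_0)\overbar{B(M)}$, this gives the claim; one should note that when $t=t_0$ the set $\overbar C$ itself need not have nonempty interior, but in that case the inequality $\rho_{t_0}(\overbar C)\geq\rho_{t_0}(C)$ is trivial, and for $t>t_0$ the Minkowski sum with a ball of positive radius has the closure of its interior equal to itself, so no information is lost.

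The main technical obstacle is the interplay between open versus closed versus Borel sets in the portmanteau argument: one must be slightly careful that $\overbar{C} + (t-t_0)\overbar{B(M)}$ is closed (sum of a closed set and a compact set is closed) so that the $\limsup$ inequality is legitimate on the $\rho_{t_0}^N$ side after replacing $C$ by $\overbar C$, and that replacing the target set by its interior on the $\rho_t$ side is harmless because for $t>t_0$ the set is a Minkowski sum with a ball of radius $(t-t_0)M>0$, which is ``fattened'' enough that $\overbar{C}+(t-t_0)\overbar{B(M)} = \overline{\mathrm{int}(\overbar{C}+(t-t_0)\overbar{B(M)})}$ up to a $\rho_t$-null boundary; alternatively one can sidestep this entirely by working with the slightly larger set $\overbar C + (t-t_0+\epsilon)\overbar{B(M)}$, which is a closed neighbourhood, proving the bound for all $\epsilon>0$, and then letting $\epsilon\to 0$ using that $\rho_t$ is a finite measure. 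I would adopt this $\epsilon$-version as the cleanest route. A final minor point to check is that the particle-level inequality is valid for \emph{all} $t_0\leq t$ and all $N$ uniformly (not just a.e. $t$): this is immediate since the trajectories $x_i^N(\cdot)$ are genuinely continuous functions defined for every $t\in[0,T]$ by Theorem \ref{exist_micro}, so $\rho_t^N$ is defined pointwise in $t$ and the velocity bound holds pointwise.
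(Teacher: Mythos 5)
Your overall strategy coincides with the paper's: prove the kinematic bound $\rho_t^N\big(\overbar{C}+(t-t_0)\overbar{B(M)}\big)\ge\rho_{t_0}^N(C)$ at the particle level (your first step is correct and is exactly the paper's Step 1), then transfer it to the limit via portmanteau. The genuine gap is in the limit passage: the two portmanteau inequalities in your displayed chain are paired with the wrong sides and written in the wrong directions. On the time-$t$ side you need the closed-set inequality $\rho_t(K)\ge\limsup_N\rho_t^N(K)$ applied directly to the closed set $K=\overbar{C}+(t-t_0)\overbar{B(M)}$ (no interior is needed at all); instead you invoke the open-set inequality, which gives $\rho_t(\mathrm{int}(\cdots))\le\liminf_N\rho_t^N(\mathrm{int}(\cdots))$, i.e. the reverse of the first inequality in your chain. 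More seriously, on the time-$t_0$ side you claim $\limsup_N\rho_{t_0}^N(\overbar{C})\ge\rho_{t_0}(\overbar{C})$ ``by the closed-set inequality'', but that inequality states the opposite, $\limsup_N\rho_{t_0}^N(\overbar{C})\le\rho_{t_0}(\overbar{C})$, and the claimed direction is false in general: take $\rho_{t_0}^N=\delta_{1/N}\rightharpoonup\delta_0=\rho_{t_0}$ and $C=\{0\}$. This failure hits precisely the case the proposition is used for in Step 3 of the proof of Theorem \ref{main1}, where $C$ is a singleton carrying an atom of $\rho_{t_0}$: the approximating empirical measures generically give that singleton zero mass, so your chain yields nothing there.

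The repair is to fatten the \emph{initial} set, not the target set: apply the particle bound to the open set $C+B(1/n)$, use the liminf/open inequality at time $t_0$, namely $\liminf_N\rho_{t_0}^N(C+B(1/n))\ge\rho_{t_0}(C+B(1/n))\ge\rho_{t_0}(C)$, use the limsup/closed inequality at time $t$ on the closed set $\overbar{C+B(1/n)}+(t-t_0)\overbar{B(M)}$, and finally let $n\to\infty$ by continuity from above of the finite measure $\rho_t$, noting that the intersection of these closed sets is $\overbar{C}+(t-t_0)\overbar{B(M)}$; this is exactly the paper's Step 2. Your $\epsilon$-remark enlarges only the target set and addresses the interior-versus-closure point on the $\rho_t$ side, which is a non-issue; however, since $\overbar{C}+(t-t_0+\epsilon)\overbar{B(M)}=\big(\overbar{C}+\epsilon\overbar{B(M)}\big)+(t-t_0)\overbar{B(M)}$, your $\epsilon$-route does become correct once the extra $\epsilon\overbar{B(M)}$ is read as a fattening of $C$ and the open-set portmanteau is applied at time $t_0$ to $C+\epsilon B(M)$ — at which point it is the paper's argument in disguise.
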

\begin{proof}
First we shall prove that for each $N \in \mathbb{N}$ the family $\{\mu_t^N\}_{t \in [0,T]}$ is locally mass preserving, and then show that this property transfers to the narrow limit $\{\mu_t\}_{t\in[0,T]}$. Throughout the proof let $t \ge t_0\ge 0$ as well as $C\subset \R^d$ be fixed (as denoted in condition (MP)).  

\medskip
\noindent
$\diamond$ {\sc Step 1.} {\it Proof for $\mu^N$.}

Let $N\in \mathbb{N}$ be fixed and consider measures $\mu_t^N$, $\mu_{t_0}^N \in \mathcal{P}(\R^{2d})$ (note that the atomic solutions are defined for all $t$) and their local densities $\rho_{t}^N, \rho_{t_0}^N \in\mathcal{P}(\R^{2d})$, see \eqref{localquant}.
 We need to show that 
 \begin{equation}\label{cel_mpf_1}
 \rho_t^N(\overbar{C}+(t-t_0)\overbar{B(M)}) \ge \rho_{t_0}^N(C).
 \end{equation}
Since the atomic solutions are derived from the solutions to the particle system, the above inequality can be described as follows. Any particles, initially at $t=t_0$, situated inside $C$, are always (for all $t\geq t_0$) contained in the set $\overbar{C} + (t-t_0)\overbar{B(M)}$. This is clear, because the particles follow classical trajectories and move with speeds not exceeding $M$. 

\medskip
\noindent
$\diamond$ {\sc Step 2.} {\it Proof for $\mu_t$.}
By Definition \ref{def:MFL} we know that 
$$
\rho_t^N \rightharpoonup \rho_t, \quad \rho_{t_0}^N \rightharpoonup \rho_{t_0}.
$$
Let $B\left(1/n\right)$ be an open ball centered at $0$ of radius $\frac{1}{n}$ so that the Minkowski sum $C+ B(1/n)$ is open. Using Step 1 and Portmanteau theorem we find

\begin{align*}
    \rho_{t_0}(C) &\le \rho_{t_0}\Big(C+B(1/n)\Big)  
    \le \liminf_{N \rightarrow \infty} \rho_{t_0}^N \Big(C+B(1/n)\Big)
    \le \limsup_{N \rightarrow \infty} \rho_{t_0}^N \Big(\overbar{C+B(1/n)}\Big) \\
    &\stackrel{\eqref{cel_mpf_1}}{\le} \limsup_{N \rightarrow \infty} \rho_{t}^N   \Big(\overbar{C+B(1/n)+(t-t_0)B(M)}\Big)
    \le  \rho_{t}  \Big(\overbar{C+B(1/n)+(t-t_0)B(M)}\Big)\\
   & = \rho_{t}  \Big(\overbar{C+B(1/n)}+(t-t_0)\overbar{B(M)}\Big).
\end{align*}
The final equality above follows from the fact that the Minkowski sum of a closed set and a compact set is closed. It holds for all $n\in{\mathbb N}$ and the decreasing family of concetric balls $B(1/n)$.
Thus, it holds also for their finite intersections i.e. for any $j \in \mathbb{N}$ we have
$$
\rho_{t_0}(C) \le \rho_t \Big( \bigcap\limits_{n=1}^j\Big(\overbar{ C+B(1/n)}+(t-t_0)\overbar{B(M)}\Big) \Big).
$$
The family of sets on the right-hand side above is decreasing with respect to $j$, and thus
passing with $j\rightarrow \infty$ we find 
\begin{align*}
\rho_{t_0}(C) &\le \lim_{j\rightarrow \infty} \rho_t \Big( \bigcap\limits_{n=1}^j\Big(\overbar{ C+B(1/n)}+(t-t_0)\overbar{B(M)}\Big) \Big)\\
& = \rho_t \Big( \bigcap\limits_{n=1}^\infty\Big(\overbar{ C+B(1/n)}+(t-t_0)\overbar{B(M)}\Big) \Big) = \rho_t\Big(\overbar{C}+(t-t_0)\overbar{B(M)}\Big).
\end{align*}
The proof is finished.
\end{proof}

\begin{prop}\label{lem:asump}
The family of measures $\{\mu_t\}_{t\in[0,T]}$ from Definition \ref{def:MFL} is steadily flowing provided that $\alpha\leq 2$. 
\end{prop}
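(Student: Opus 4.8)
The plan is to verify the two conditions in property (SF) for the limit family $\{\mu_t\}_{t\in[0,T]}$: that there is a full measure set $A\subset[0,T]$ on which, for every $0\leq\phi\in C_c^\infty(\R^d)$, the measure-valued function $t\mapsto\rho_{t_0,t}[\phi]$ is narrowly continuous at $t_0\in A$ when restricted to $A$. The natural strategy is to transport the regularity we already possess, namely the $C([0,T];(\mathcal P(\R^d),d_{BL}))$-convergence $\rho^N\to\rho$ and the narrow convergence $\mu^N\rightharpoonup\mu$, through the pushforward along $T^{t_0,t}(x,v)=(x-(t-t_0)v,v)$. First I would reduce the narrow continuity of $\rho_{t_0,t}[\phi]$ to the continuity of scalar quantities $t\mapsto\int_{\R^{2d}}\chi(x)\phi(v)\dd T^{t_0,t}_\#\mu_t = \int_{\R^{2d}}\chi(x-(t-t_0)v)\phi(v)\dd\mu_t(x,v)$ for $\chi\in C_b(\R^d)$, using the change of variables formula \eqref{changeofvariables}; by a density argument it suffices to treat $\chi\in C_c^\infty(\R^d)$, in which case $(x,v)\mapsto\chi(x-(t-t_0)v)\phi(v)$ is a jointly continuous, uniformly bounded, uniformly (in $t$ over the compact support) equicontinuous family of test functions on the fixed compact support $(T+1)B(M)\times B(M)$.

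The key step is then to bound, for such a product test function $g_t(x,v):=\chi(x-(t-t_0)v)\phi(v)$, the time-increment $\left|\int g_t\dd\mu_t-\int g_{t_0}\dd\mu_{t_0}\right|$. I would split this as $\left|\int(g_t-g_{t_0})\dd\mu_t\right|+\left|\int g_{t_0}\dd\mu_t-\int g_{t_0}\dd\mu_{t_0}\right|$. The first term is controlled by $|t-t_0|\,M\,\|\nabla\chi\|_\infty\|\phi\|_\infty$ using the Lipschitz bound on $\chi$ and the uniform velocity bound $M$, so it is actually continuous without any restriction. The delicate second term involves the function $g_{t_0}(x,v)=\chi(x-(t_0-t_0)v)\phi(v)=\chi(x)\phi(v)$ tested against $\mu_t$; this is where we invoke the compactness machinery: along the subsequence from Definition \ref{def:MFL}, $\int_{\R^{2d}}\chi(x)\phi(v)\dd\mu_t^N\to\int_{\R^{2d}}\chi(x)\phi(v)\dd\mu_t$ in $L^2([0,T])$ (as established in the first part of the proof of Proposition \ref{lem:mutimesmu}, since $\chi(x)\phi(v)\in C^2_c$ after mollification/scaling). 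Hence there is a full measure set $A$ along which this holds pointwise, and for $\mu^N_t$ — being an atomic solution with smooth particle trajectories — the map $t\mapsto\int\chi\phi\dd\mu^N_t$ is genuinely continuous with derivative bounded in $L^1([0,T])$ uniformly in $N$ (again by the enstrophy estimate and Proposition \ref{eneq}). Passing these uniform $W^{1,1}$ bounds to the limit via lower semicontinuity of the $L^1$-norm of the distributional derivative, I would conclude that $t\mapsto\int\chi\phi\dd\mu_t$ agrees a.e. with an absolutely continuous (hence continuous) function, so it is continuous after restriction to a full measure set.

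Finally I would assemble these pieces: taking a countable dense family $\{\chi_k\}\subset C_c^\infty(\R^d)$ and a countable dense family $\{\phi_m\}$, intersecting the corresponding full measure sets $A_{k,m}$ together with $A_E$ and the set on which the disintegration $\rho_t$ is well-defined, yields one full measure set $A$. For $t_0\in A$ and any fixed $\phi\in C_c^\infty(\R^d)$, the preceding estimates give narrow continuity of $\rho_{t_0,t}[\phi]$ along $A$ at $t=t_0$ (after approximating the generic $\chi$ by the dense family and using the uniform bounds to control errors), which is exactly property (SF).

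The main obstacle I anticipate is the interplay between the two distinct convergences and the restriction to a full measure set: $\mu^N\rightharpoonup\mu$ holds in the space-time sense (so $\int\int\psi(t)\chi(x)\phi(v)\dd\mu^N\to\int\int\psi(t)\chi(x)\phi(v)\dd\mu$ only after integrating in $t$), and one must carefully upgrade this to pointwise-in-$t$ statements using the uniform $W^{1,1}([0,T])$ bounds — precisely the mechanism already used for Proposition \ref{lem:mutimesmu} and Lemma \ref{lem:zbiegze}. The role of the hypothesis $\alpha\leq 2$ is exactly to make the term $\tfrac{1}{2}(2(T+1)M)^{2-\alpha}$ in the enstrophy/derivative estimate finite and $N$-independent; without it the uniform $W^{1,1}$ bound fails. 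One also has to be slightly careful that $g_{t_0}(x,v)=\chi(x)\phi(v)$, while not compactly supported as a function on $\R^{2d}$, is admissible because all measures live on the fixed compact $(T+1)B(M)\times B(M)$, so it can be replaced by a genuinely compactly supported $C^2$ function without changing any integral — a routine but necessary remark.
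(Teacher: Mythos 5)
Your reduction is a genuinely different route from the paper's, and in outline it is viable: splitting
\begin{equation*}
\int_{\R^{2d}}\xi(x-(t-t_0)v)\phi(v)\dd\mu_t-\int_{\R^{2d}}\xi(x)\phi(v)\dd\mu_{t_0}
=\int_{\R^{2d}}\bigl[\xi(x-(t-t_0)v)-\xi(x)\bigr]\phi(v)\dd\mu_t+\Bigl(\int\xi\phi\dd\mu_t-\int\xi\phi\dd\mu_{t_0}\Bigr)
\end{equation*}
into a transport part of size $M[\xi]_{Lip}\|\phi\|_\infty|t-t_0|$ plus the increment of $t\mapsto\int\xi\phi\dd\mu_t$ reduces (SF) to restricted-in-time narrow continuity of the curve $t\mapsto\mu_t$ itself, tested on a countable family. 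This bypasses the paper's main machinery entirely: there, the test function $\xi(x-(t-t_0)v)\phi(v)$ is inserted into the weak formulation precisely because $(\partial_t+v\cdot\nabla_x)$ annihilates it, and the increments are controlled by the energy drop, which forces the dense set $D_E$ of base times, a Lebesgue--Besicovitch step, and the separate Step 3 recovering $t_0=t_1$ via Lipschitz continuity in $t_0$ of the pushforward. Your route is shorter and reuses the derivative estimate already proved for Proposition \ref{lem:mutimesmu}; you also correctly identify where $\alpha\le 2$ enters (the $|x-x'|^{2-\alpha}$ term in the uniform $W^{1,1}$ bound).

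There is, however, one step that is wrong as stated: uniform $W^{1,1}([0,T])$ bounds do \emph{not} pass to an absolutely continuous limit. $W^{1,1}$ is not weakly closed (it is non-reflexive); a sequence bounded in $W^{1,1}$ and convergent in $L^2$ has in general only a limit of bounded variation, and jumps can genuinely occur for the mean-field limit -- this is exactly why (SF), Lemma \ref{lem:disip}, and the whole framework are formulated with continuity \emph{after restriction to a full measure set} rather than plain continuity. If your claim were true, $t\mapsto\int\xi\phi\dd\mu_t$ would be continuous on all of $[0,T]$ and no exceptional set $A$ would ever be needed; note that the paper only obtains restricted continuity even for the energy $E[\mu_t]$. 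Fortunately the conclusion you need survives under the correct statement: the limit agrees a.e.\ with a BV function, whose discontinuity set is countable; removing it, together with the null set where the a.e.-defined function differs from its good (say right-continuous) representative, gives for each test function of your countable family a full measure set on which the restricted function is continuous at every point, and intersecting these sets (together with $A_E$ and the set where the disintegration is defined) produces the single set $A$; your density/approximation step then closes the argument, including the passage from Lipschitz $\xi$ to general $\xi\in C_b$ using the uniform compact supports. So the architecture of your proof stands, but the ``absolutely continuous, hence continuous'' assertion must be replaced by this BV/countable-jump argument; as written it is a genuine error, not merely a shortcut.
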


\noindent
Before we proceed with the proof let us provide a useful lemma, which serves both as a first step and as an interesting piece of information in itself.

\begin{lem}\label{lem:disip}
The family of measures $\{\mu_t\}_{t\in[0,T]}$ from Definition \ref{def:MFL} dissipates kinetic energy a.e., namely the function $t\mapsto E[\mu_t]$ is non-increasing for a.a. $t\in[0,T]$ and we have
\begin{equation}\label{disipineq}
    \int_{t_1}^{t_2} D[\mu_t]\dd t \leq E[\mu_{t_1}] - E[\mu_{t_2}] 
\end{equation} 
for a.a. $t_1\leq t_2\in[0,T]$.
 Without a loss of generality we may assume that  $t\mapsto E[\mu_t]$ is non-increasing and continuous after restriction to $A_E$, defined in Lemma \ref{lem:zbiegze}.
\end{lem}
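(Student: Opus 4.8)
The plan is to pass the particle-level energy identity of Proposition \ref{eneq} to the limit by a lower-semicontinuity argument, relying on the convergences assembled in Definition \ref{def:MFL} and on Lemma \ref{lem:zbiegze}. Rewriting Proposition \ref{eneq} in terms of the atomic solutions and subtracting the identity at two times gives, for every $N$ and all $0\le t_1\le t_2\le T$,
\[
\int_{t_1}^{t_2} D[\mu_t^N]\,\dd t = E[\mu_{t_1}^N]-E[\mu_{t_2}^N],
\]
where $D$ is the enstrophy of Section \ref{sec:prelim}; for atomic solutions it coincides with the dissipation in Proposition \ref{eneq}, since non-collisionality of the trajectories (Theorem \ref{exist_micro}) means that deleting $\Delta$ from the integration only removes the vanishing $i=j$ contributions. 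In particular each $t\mapsto E[\mu_t^N]$ is non-increasing.

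To let $N\to\infty$ I would regularise the singular integrand $g(x,v,x',v'):=|v-v'|^2/|x-x'|^\alpha$. Take $\theta_k\colon[0,\infty)\to[0,1]$ continuous with $\theta_k\equiv 0$ on $[0,\tfrac{1}{2k}]$ and $\theta_k\equiv 1$ on $[\tfrac1k,\infty)$, and set $g_k(x,v,x',v'):=\min\{g(x,v,x',v'),k\}\,\theta_k(|x-x'|)$. Then $g_k$ is bounded and continuous on $\R^{4d}$ (it vanishes in a neighbourhood of $\Delta$), $g_k\equiv 0$ on $\Delta$, and $g_k\nearrow g$ pointwise on $\R^{4d}\setminus\Delta$. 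Hence, for each fixed $t$, $\int_{\R^{4d}}g_k\,\dd[\mu_t^N\otimes\mu_t^N]=\int_{\R^{4d}\setminus\Delta}g_k\,\dd[\mu_t^N\otimes\mu_t^N]\le D[\mu_t^N]$, and likewise with $\mu_t$ in place of $\mu_t^N$.

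Now fix $t_1\le t_2$ in $A_E$ and $\psi\in C_c((t_1,t_2))$ with $0\le\psi\le 1$. Since $(t,x,v,x',v')\mapsto\psi(t)g_k(x,v,x',v')$ is bounded continuous and all measures are supported in one fixed compact set, the narrow convergence $[\mu_t^N\otimes\mu_t^N]\otimes\lambda^1(t)\rightharpoonup[\mu_t\otimes\mu_t]\otimes\lambda^1(t)$ together with the previous step and Lemma \ref{lem:zbiegze} give
\begin{align*}
\int_0^T\psi(t)\int_{\R^{4d}}g_k\,\dd[\mu_t\otimes\mu_t]\,\dd t
&=\lim_{N\to\infty}\int_0^T\psi(t)\int_{\R^{4d}}g_k\,\dd[\mu_t^N\otimes\mu_t^N]\,\dd t\\
&\le\lim_{N\to\infty}\big(E[\mu_{t_1}^N]-E[\mu_{t_2}^N]\big)=E[\mu_{t_1}]-E[\mu_{t_2}].
\end{align*}
Taking the supremum over all such $\psi$ and then letting $k\to\infty$ (monotone convergence on $\R^{4d}\setminus\Delta$) produces $\int_{t_1}^{t_2}D[\mu_t]\,\dd t\le E[\mu_{t_1}]-E[\mu_{t_2}]$ for all $t_1\le t_2$ in $A_E$, i.e.\ \eqref{disipineq}. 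Since $D\ge 0$, this also yields $E[\mu_{t_1}]\ge E[\mu_{t_2}]$, so $t\mapsto E[\mu_t]$ is non-increasing on $A_E$, hence a.e.; being monotone it has at most countably many discontinuities, and deleting them from $A_E$ (which remains a full measure set still carrying all the convergences of Definition \ref{def:MFL}) leaves it continuous after restriction, as claimed.

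The only delicate point is the lower-semicontinuity passage across the singular diagonal: because $\mu_t\otimes\mu_t$ may genuinely charge $\Delta$, one has to check carefully that the truncations $g_k$ are continuous (including near $\Delta$), vanish on $\Delta$, and increase to $g$ only off $\Delta$, so that $\Delta$ never contributes to either side of the estimate. Everything else is a routine combination of the particle energy equality with compactness already in hand.
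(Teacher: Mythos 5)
Your proof is correct and follows essentially the same route as the paper: both start from the energy equality for the atomic approximations (the paper invokes Proposition \ref{relaxdef} for $\mu^N$ where you use Proposition \ref{eneq} directly, which is equivalent by non-collisionality) and both pass to the limit via the narrow convergence of $[\mu_t^N\otimes\mu_t^N]\otimes\lambda^1(t)$ together with the pointwise energy convergence on $A_E$, the paper merely compressing your explicit truncation argument into a citation of the Portmanteau theorem for the nonnegative lower-semicontinuous integrand. One cosmetic remark: the family $g_k$ as you specify it need not be monotone in $k$, so either choose the cut-offs $\theta_k$ increasing in $k$ (e.g.\ piecewise linear) or replace monotone convergence by Fatou's lemma; the conclusion is unchanged.
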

\begin{proof}
Each $\mu^N$ satisfies Definition \ref{weakkinetic} and Proposition \ref{relaxdef} and thus for all $0\leq t_1\leq t_2\leq T$ we have
$$ E[\mu^N_{t_1}]-E[\mu^N_{t_2}] = \int_{t_1}^{t_2}D[\mu^N_t]\dd t\geq 0. $$
Then, by Definition \ref{def:MFL}, passing with $N\to \infty$ we obtain
$$ E[\mu_{t_1}]-E[\mu_{t_2}] \geq \liminf_{N\to\infty} \int_{t_1}^{t_2}D[\mu^N_t]\dd t\geq \int_{t_1}^{t_2}D[\mu_t]\dd t\geq 0$$
for all $t_1\leq t_2\in A_E$, which marks the end of the proof. The above inequalities with limes inferior and $D[\mu_t]$ follow by the Portmanteau theorem from the narrow convergence in Definition \ref{def:MFL} and from the fact that  the singular integrand in 
$$ D[\mu_t] = \int_{\R^{4d}\setminus\Delta}\frac{|v-v'|^2}{|x-x'|^\alpha}\dd[\mu_t\otimes\mu_t]\otimes\dd t$$
is nonnegative and lower-semicontinuous.
\end{proof}

With Lemma \ref{lem:disip} at hand we are ready to proceed with the proof of Proposition \ref{lem:asump}.

\begin{proof}[Proof of Proposition \ref{lem:asump}]
According to condition (SF), our goal is to establish existence of a full measure set $A\subset[0,T]$ such that for any $t_0\in A$ and any $0\leq \phi\in C_c^\infty(\R^d)$ the family of Radon measures $\{\rho_{t_0,t}[\phi]\}_{t\in A}$ is narrowly continuous i.e. for any sequence $A\ni t_n\to t_0\in A$ we have
\begin{equation}\label{koniec5}
    \int_{\R^d}\xi(x)\dd \rho_{t_0,t_n}[\phi](x)\longrightarrow \int_{\R^d}\xi(x)\dd \rho_{t_0,t_0}[\phi](x)
\end{equation}
for all bounded and continuous functions $\xi:\R^d\to \R$.

The idea of the proof is simple. We test \eqref{weakeqkinetic} using the cut-off function $\psi_{t_1,t_2,\epsilon}$ from \eqref{cutof} multiplied by functions depending on $x$ and $v$, and we upper-bound the right-hand side using Lemma \ref{lem:disip}. Passing with $\epsilon\to 0$ we use the Lebesgue-Besicovitch differentiation theorem to extract full-measure subsets out of the continuity set $A_E$ (see $A_E$ in Lemma \ref{lem:disip}). It has to be done countably many times so that we cover a dense subset of the required test functions. The main problem is that in order to obtain (SF) we need to ensure that one of the end points in $\psi_{t_1,t_2,\epsilon}$, say $t_1$, can be equal to $t_0$. Since the way we extract the domain for $t_1$ and $t_2$ depends on $t_0$ this is far from obvious. What finally solves the problem is the fact that the function
$$ (t_0,t)\mapsto T^{t_0,t}_\#\mu_t $$
is narrowly continuous with respect to $t_0$ for any fixed $t$ (even though it may be discontinuous as a function of $t$ for a fixed $t_0$). The idea is simple, yet its execution turns out to be quite tedious. It follows in four steps.


\medskip
\noindent
$\diamond$ {\sc Step 1.} {\it The case of fixed smooth $\phi$ and $\xi$ and a fixed $t_0$.}

\smallskip
\noindent
 We aim to prove the following claim.

{\it \underline{Claim A}. Fix $\phi,\xi\in C_c^\infty(\R^d)$. Let $D_E$ be a dense countable subset of $A_E\subset (0,T)$ from Lemma \ref{lem:disip} and fix any $t_0\in D_E$. Then for any $t_1\in A_E$ and any $\eta>0$ there exists $\delta>0$ (depending  on $t_1$, $\phi$ and $\xi$), such that for all $t_2\in A_E$ with $|t_1-t_2|<\delta$ we have
\begin{equation*}
    \left|\int_0^T\int_{\R^{2d}}\partial_t\psi_{t_1,t_2,\epsilon}(t)\xi(x)\phi(v)\dd T^{t_0,t}_\#\mu_t \dd t\right|\leq \eta,
\end{equation*}
where $T^{t_0,t}$ was introduced with condition (SF) and $\psi_{t_1,t_2,\epsilon}$ is given in \eqref{cutof}.
}

\smallskip
\noindent
 To prove the above claim first fix $\phi$, $\xi$ and $t_0$ as required and
let
\begin{equation}\label{defL}
    L_{\phi,\xi}:=\max\{|\phi|_\infty, [\phi]_{Lip}, |\nabla_v\phi|_\infty, [\nabla_v\phi]_{Lip}, |\xi|_\infty, [\xi]_{Lip}, |\nabla \xi|_\infty, [\nabla\xi]_{Lip}  \}.
\end{equation} 
 Fix $t_1\in A_E$ and $\eta>0$. Then, by Lemma \ref{lem:disip}, there exists (and we shall fix it) $\delta>0$ such that
\begin{equation}\label{koniec4}
\forall t\in A_E\quad |t-t_1|<\delta\, \Rightarrow\, |E[\mu_t]-E[\mu_{t_1}]|<\frac{\eta}{4}.
\end{equation}
For our fixed $t_1$ fix any $t_2\in A_E$ such that $|t_2-t_1|<\delta$ and consider the test function
$$ \phi_\epsilon(t,x,v):= \psi_\epsilon(t)\xi(\pi_x\left(T^{t_0,t}(x,v)\right))\phi(v), $$
where $\psi_\epsilon(t):=\psi_{t_1,t_2,\epsilon}(t)$ and $\pi_x$ is the projection onto the $x$-coordinate. Clearly, $\phi_\epsilon$ is an admissible test function in the weak formulation \eqref{weakeqkinetic} for the approximative solutions $\mu^N$, which amounts to
\begin{align}\label{lem:asump-1}
    \int_0^T\int_{\R^{2d}}\partial_t\phi_\epsilon + v\cdot\nabla_x\phi_\epsilon \dd \mu_t^N \dd t = \int_{t_1-\epsilon}^{t_2+\epsilon}\int_{\R^{4d}\setminus\Delta}\underbrace{\frac{(\nabla_v\phi_\epsilon-\nabla_v\phi_\epsilon')\cdot(v-v')}{|x-x'|^\alpha}}_{{\mathcal G}:=}\dd (\mu^N_t\otimes\mu^N_t)\dd t,
\end{align}
where we assume without a loss of generality that $t_1<t_2$.
We rewrite and upper-bound the integrand ${\mathcal G}$ on the right-hand side above as
\begin{align*}
|{\mathcal G}|:=\psi_\epsilon(t)\left| \frac{\Big(\nabla_v\big(\xi(x-(t-t_0)v)\phi(v)\big)-\nabla_v\big(\xi(x'-(t-t_0)v')\phi(v')\big)\Big)\cdot(v-v')}{|x-x'|^\alpha} \right|\\
\leq\psi_\epsilon(t)\left| \frac{\Big(\nabla\xi(x-(t-t_0)v)(t-t_0)\phi(v)-\nabla\xi(x'-(t-t_0)v')(t-t_0)\phi(v')\Big)\cdot(v-v')}{|x-x'|^\alpha}\right|\\
\qquad\qquad + \psi_\epsilon(t)\left|\frac{\Big(\xi(x-(t-t_0)v)\nabla_v\phi(v)-\xi(x'-(t-t_0)v')\nabla_v\phi(v')\Big)\cdot(v-v')}{|x-x'|^\alpha} \right| =: I + II
\end{align*}
and further, up to constants depending on $T$ and $L_{\phi,\xi}$ but independent of $t_0$, as
\begin{align*}
    I &\lesssim \psi_\epsilon(t)\left| \nabla\xi(x-(t-t_0)v)\frac{(\phi(v)-\phi(v'))\cdot(v-v')}{|x-x'|^\alpha}\right|\\
    &\qquad\qquad + \psi_\epsilon(t)\left|\phi(v')\frac{\Big(\nabla\xi(x-(t-t_0)v)-\nabla\xi(x'-(t-t_0)v')\Big)\cdot(v-v')}{|x-x'|^\alpha}  \right|\\
    &\lesssim \psi_\epsilon(t)\frac{|v-v'|^2}{|x-x'|^\alpha} + \psi_\epsilon(t)\frac{|x-x'||v-v'|}{|x-x'|^\alpha}
\end{align*}
and as
\begin{align*}
II &\leq  \psi_\epsilon(t)\left|\xi(x-(t-t_0)v)\frac{(\nabla_v\phi(v)-\nabla_v\phi(v'))\cdot(v-v')}{|x-x'|^\alpha}\right|\\
&\qquad\qquad + \psi_\epsilon(t)\left|\nabla_v\phi(v')\frac{\big(\xi(x-(t-t_0)v)-\xi(x'-(t-t_0)v')\big)\cdot(v-v')}{|x-x'|^\alpha}\right|\\
    &\lesssim \psi_\epsilon(t)\frac{|v-v'|^2}{|x-x'|^\alpha} + \psi_{\epsilon}(t)\frac{|x-x'||v-v'|}{|x-x'|^\alpha}.
\end{align*}
The above inequalities can be further bounded using Young's inequality
$$ \frac{|x-x'||v-v'|}{|x-x'|^\alpha}\leq \frac{1}{2}\frac{|v-v'|^2}{|x-x'|^\alpha} + \frac{1}{2}|x-x'|^{2-\alpha} $$
to obtain
\begin{equation}\label{koniec3}
    |{\mathcal G}|\leq C_{\phi,\xi}\psi_\epsilon(t) \frac{|v-v'|^2}{|x-x'|^\alpha} + C_{\phi,\xi} \psi_\epsilon(t)|x-x'|^{2-\alpha},
\end{equation}
where $C_{\phi,\xi}$ is a constant depending on $T$ and $L_{\phi,\xi}$ only.
To deal with $\psi_\epsilon$ above, observe that, since $|t_2-t_1|<\delta$, for sufficiently small $\epsilon>0$ we have ${\rm spt}(\psi_\epsilon)\subset [t_1-\epsilon,t_2+\epsilon]\subset (t_1-\delta,t_1+\delta)$. Since $A_E$ is of full measure, there exist $t_1'$ and $t_2'$ in $A_E$ such that $t_1'\leq t_1-\epsilon < t_2+\epsilon\leq t_2'$ yet still $|t_i'-t_1|<\delta$, $\in\{1,2\}$. Thus, noting that $\psi_\epsilon(t)\leq \chi_{[t_1',t_2']}(t)$ for all sufficiently small $\epsilon>0$ we can further bound \eqref{koniec3} from the above by
$$ |{\mathcal G}|\leq  C_{\phi,\xi}\chi_{[t_1',t_2']}(t) \frac{|v-v'|^2}{|x-x'|^\alpha} + C_{\phi,\xi}\chi_{[t_1',t_2']}(t) |x-x'|^{2-\alpha}.$$
Plugging the above upper-bound back into \eqref{lem:asump-1}  we end up with
\begin{align*}
    \left|\int_0^T\int_{\R^{2d}}\partial_t\phi_\epsilon + v\cdot\nabla_x\phi_\epsilon \dd \mu_t^N \dd t\right| &\leq C_{\phi,\xi} \int_{t_1'}^{t_2'}D[\mu^N_t]\dd t
    + C_{\phi,\xi}\int_{t_1'}^{t_2'}\int_{\R^{4d}} |x-x'|^{2-\alpha} \dd(\mu^N_t\otimes\mu^N_t)\dd t\\
    &\leq C_{\phi,\xi}\left|E[\mu^N_{t_1'}]-E[\mu^N_{t_2'}]\right| + C_{\phi,\xi}|2(T+1)M|^{2-\alpha}|t_2'-t_1'|,
\end{align*}
where the last inequality is based on the uniform boundedness of the support of $\mu^N_t$.
 By Definition \ref{def:MFL}, $E[\mu^N_t]\xrightarrow{N\to\infty} E[\mu_t]$ on $A_E\supset \{t_1',t_2'\}$. Therefore passing with $N\to\infty$ above yields

\begin{align*}
    \frac{1}{C_{\phi,\xi}}\left|\int_0^T\int_{\R^{2d}}\partial_t\phi_\epsilon + v\cdot\nabla_x\phi_\epsilon \dd \mu_t \dd t\right| &\leq 
     \left|E[\mu_{t_1'}]-E[\mu_{t_2'}]\right| + |2(T+1)M|^{2-\alpha}|t_2'-t_1'|\\
     &\leq \left|E[\mu_{t_1'}]-E[\mu_{t_1}]\right| + \left|E[\mu_{t_2'}]-E[\mu_{t_1}]\right|\\
     &\qquad\qquad + |2(T+1)M|^{2-\alpha}|t_1'-t_1| + |2(T+1)M|^{2-\alpha}|t_2'-t_1|.
\end{align*}
 Thus recalling \eqref{koniec4} and that $|t_i'-t_1|<\delta$, $i\in\{1,2\}$, and possibly multiplying $\eta$ by a constant depending on $L$ and $T$, we have
\begin{align*}
    \left|\int_0^T\int_{\R^{2d}}\partial_t\phi_\epsilon + v\cdot\nabla_x\phi_\epsilon \dd \mu_t \dd t\right| \leq C_{\phi,\xi}\frac{\eta}{2} + 2C_{\phi,\xi}|2(T+1)M|^{2-\alpha}\delta.
\end{align*}
Finally, regardless whether $\alpha<2$ or $\alpha = 2$, we can make our $\delta$ smaller if needed, and arrive at
\begin{equation}\label{przedostatnie1}
    \left|\int_0^T\int_{\R^{2d}}\partial_t\phi_\epsilon + v\cdot\nabla_x\phi_\epsilon \dd \mu_t \dd t\right| \leq C_{\phi,\xi}\eta.
\end{equation}
To finish the first step let as unravel the left-hand side of the above inequality. Using the pushforward change of variables formula \eqref{changeofvariables} and the chain rule we obtain
\begin{equation}\label{przedostatnie2}
\begin{split}
    \int_0^T\int_{\R^{2d}}\partial_t\phi_\epsilon + v\cdot\nabla_x\phi_\epsilon \dd \mu_t \dd t& = \int_0^T\int_{\R^{2d}}\partial_t\psi_\epsilon(t)\xi(x-(t-t_0)v)\phi(v)\dd \mu_t \dd t \\
    &+ \int_0^T\psi_\epsilon(t)\int_{\R^{2d}}\underbrace{\partial_t(\xi(x-(t-t_0)v))\phi(v)+ v\cdot\nabla\xi(x-(t-t_0)v)\phi(v)}_{=0} \dd \mu_t \dd t \\
    & = \int_0^T\int_{\R^{2d}}\partial_t\psi_\epsilon(t)\xi(x)\phi(v)\dd T^{t_0,t}_\#\mu_t \dd t.
    \end{split}
\end{equation}
Combining \eqref{przedostatnie1} with \eqref{przedostatnie2} we conclude the proof of the claim and the first step is finished. The constant $C_{\phi,\xi}$ in \eqref{przedostatnie1} can be incorporated into $\eta$ as long as we remember that $\delta$ depends on $L_{\phi,\xi}$.

\medskip
\noindent
$\diamond$ {\sc Step 2.} {\it Passing with $\epsilon\to 0$.}

\smallskip
\noindent
By  Definition \ref{def:MFL} our family of measures $\{\mu_t\}_{t\in[0,T]}$, and thus also $\{T^{t_0,t}_\#\mu_t\}_{t\in[0,T]}$ have uniformly compact supports contained in some large compact set $K$. Moreover, by Stone-Weierstrass theorem there exists a countable set ${\mathcal D}\subset C_c^\infty(\R^d)$ which is dense in $C_b(K)$.
 We aim to prove the following claim.

{\it \underline{Claim B}. There exists a full measure set $A\subset[0,T]$ such that for all $t_0\in D_E$ and all $\phi,\xi\in{\mathcal D}$ the following equivalent statements hold true.  

\smallskip
\noindent
For all $t_1\in A$ and all $\eta>0$ there exists $\delta>0$ (depending on $t_1$, $\phi$ and $\xi$) such that for all  $t_2 \in A$ with $|t_2-t_1|<\delta$ we have
\begin{equation*}
    \left|\int_{\R^{2d}}\xi(x)\phi(v)\dd T^{t_0,t_2}_\#\mu_{t_2} - \int_{\R^{2d}}\xi(x)\phi(v)\dd T^{t_0,t_1}_\#\mu_{t_1}\right|\leq\eta.
\end{equation*}

\smallskip
\noindent
In other words for all $t_1\in A$ and all $\{t_n\}_{n=1}^\infty\subset A$ such that $t_n\to t_1$ we have
\begin{equation*}
    \int_{\R^{2d}}\xi(x)\phi(v)\dd T^{t_0,t_n}_\#\mu_{t_n} \to \int_{\R^{2d}}\xi(x)\phi(v)\dd T^{t_0,t_1}_\#\mu_{t_1}.
\end{equation*}
}

\smallskip
\noindent
To prove the above claim take any $t_0\in D_E$ and any $\phi_1,\xi_1\in{\mathcal D}$. Then Claim A holds meaning that for all $t_1\in A_E$ and all $\eta>0$ there exists $\delta>0$ as in Claim A. We aim to pass with $\epsilon$ to $0$. The argument is similar to that in the proof of Proposition \ref{compactness}, where we already calculated derivatives of $\psi_\epsilon$. Observe that this time around, the function $t\mapsto \int_{\R^{2d}}\xi_1(x)\phi_1(v)\dd T^{t_0,t}_\#\mu_t$ is not necessarily continuous but it is integrable, since both $\phi_1$ and $\xi_1$ are bounded.  Thus we can apply the Lebesgue-Besicovitch differentiation theorem to ensure the existence of a full measure subset $A_{1,t_0}\subset A_E$, depending on the test functions $\phi_1,\xi_1$ and on $t_0$, such that
$$\lim_{\epsilon\to 0} \left|\int_0^T\int_{\R^{2d}}\partial_t\psi_{t_1,t_2,\epsilon}(t)\xi(x)\phi(v)\dd T^{t_0,t}_\#\mu_t \dd t \right|= \left|\int_{\R^{2d}}\xi(x)\phi(v)\dd T^{t_0,t_2}_\#\mu_{t_2} - \int_{\R^{2d}}\xi(x)\phi(v)\dd T^{t_0,t_1}_\#\mu_{t_1}\right|$$
for all $t_1,t_2\in A_{1,t_0}$. Then if additionally $|t_1-t_2|<\delta$ (with $\delta$ depending only on $t_1$ and the norms in $L_{\phi,\xi}$ in \eqref{defL}), we have
$$ \left|\int_{\R^{2d}}\xi(x)\phi(v)\dd T^{t_0,t_2}_\#\mu_{t_2} - \int_{\R^{2d}}\xi(x)\phi(v)\dd T^{t_0,t_1}_\#\mu_{t_1}\right|<\eta. $$

\noindent
This procedure can be repeated indefinitely for all $i=1,2,...$ and all $\phi_i,\xi_i\in {\mathcal D}$ and all  $t_0\in D_E$ leading to the existence of a full measure subset 
$$A:=\bigcap_{i\in{\mathbb N}, t_0\in D_E}A_{i,t_0}\subset A_E$$
satisfying Claim B.

\medskip
\noindent


\medskip
\noindent
$\diamond$ {\sc Step 3.} {\it Taking $t_0 = t_1$.}

\smallskip
\noindent
First let us note the obvious problem that in order to have (SF) we need to make sure that we actually can take $t_0=t_1$ in Claim B. It is unclear since set $A$ may be disjoint with $D_E$ and Claim B  holds only for $t_0\in D_E$. However, the pushforward measure $T^{t_0,t_1}_\#\mu_{t_1}$ is designed so that for any fixed $t_1\in A$ and any $\phi,\xi\in {\mathcal D}$ the function 

$$ t_0\mapsto \int_{\R^{2d}}\xi\phi\dd T^{t_0,t_1}_\#\mu_{t_1} $$
is continuous. To see it take $t_n\to t_0$ for some  $\{t_n\}_{n=1}^\infty\subset D_E$ and $t_0\in A$. Then we have

\begin{equation}\label{ciagpot0}
    \left|\int_{\R^{2d}}\xi\phi\dd T^{t_n,t_1}_\#\mu_{t_1} - \int_{\R^{2d}}\xi\phi\dd T^{t_0,t_1}_\#\mu_{t_1}\right| \leq M[\xi]_{Lip}|\phi|_\infty|t_n-t_0|,
\end{equation}
which not only converges to $0$ but it does so independently of $t_1$ (in fact we have Lipschitz continuity). With that in mind we aim to prove the following claim.

{\it \underline{Claim C}. For all $t_0\in A$ and all $\phi,\xi\in{\mathcal D}$ and any $\{t_n\}_{n=1}^\infty\subset A$, such that $t_n\to t_0$, we have}
$$ \int_{\R^{2d}}\xi\phi\dd T^{t_0,t_n}_\#\mu_{t_n} \to \int_{\R^{2d}}\xi\phi\dd T^{t_0,t_0}_\#\mu_{t_0}.$$

To prove the claim fix $t_0\in A$ and  $\phi,\xi\in{\mathcal D}$ and any $\{t_n\}_{n=1}^\infty\subset A$ such that $t_n\to t_0$. Then for all $\eta>0$ there exists $t_0'\in D_E$ such that $|t_0-t_0'|<\eta$. Then, by Claim B, we have
$$ \int_{\R^{2d}}\xi\phi  \dd T^{t_0',t_n}_\#\mu_{t_n} \to \int_{\R^{2d}}\xi\phi  \dd T^{t_0',t_0}_\#\mu_{t_0}.$$

\noindent
Thus using \eqref{ciagpot0} we infer that
\begin{align*}
    \left|\int_{\R^{2d}}\xi\phi  \dd T^{t_0,t_n}_\#\mu_{t_n} - \int_{\R^{2d}}\xi\phi  \dd T^{t_0,t_0}_\#\mu_{t_0}\right|\leq \left|\int_{\R^{2d}}\xi\phi  \dd T^{t_0,t_n}_\#\mu_{t_n} - \int_{\R^{2d}}\xi\phi  \dd T^{t_0',t_n}_\#\mu_{t_n}\right|\\
    + \left|\int_{\R^{2d}}\xi\phi  \dd T^{t_0',t_n}_\#\mu_{t_n} - \int_{\R^{2d}}\xi\phi  \dd T^{t_0',t_0}_\#\mu_{t_0}\right| + \left|\int_{\R^{2d}}\xi\phi  \dd T^{t_0',t_0}_\#\mu_{t_0} - \int_{\R^{2d}}\xi\phi  \dd T^{t_0,t_0}_\#\mu_{t_0}\right|\\
    \leq \left|\int_{\R^{2d}}\xi\phi  \dd T^{t_0',t_n}_\#\mu_{t_n} - \int_{\R^{2d}}\xi\phi  \dd T^{t_0',t_0}_\#\mu_{t_0}\right|+ 2M[\xi]_{Lip}|\phi|_\infty\eta \xrightarrow{n\to\infty} 2M[\xi]_{Lip}|\phi|_\infty\eta. 
\end{align*}
Due to the arbitrarity of $\eta>0$ Claim C is proved.

\medskip
\noindent
$\diamond$ {\sc Step 4.} {\it Relaxing the restrictions on $\phi$ and $\xi$.}

\smallskip
\noindent
Our goal in the final step is to obtain the result of Step 3 for any $\phi\in C_c^\infty(\R^d)$ and $\xi\in C_b(\R^d)$. Aiming at \eqref{koniec5} (and using notation therein)  let us fix any $\phi\in C_c^\infty(\R^d)$ and $\xi\in C_b(\R^d)$ and let $\{t_n\}_{n\in{\mathbb N}}\subset A$ be such that $t_n\to t_0\in A$. Our goal is to prove that 
\begin{equation}\label{step3}
\int_{\R^d}\xi\dd \rho_{t_0,t_n}[\phi] \to \int_{\R^d}\xi\dd \rho_{t_0,t_0}[\phi].
\end{equation}
By the definition of ${\mathcal D}$ and $K$ in Step 2, for every $\eta>0$ there exist functions $\phi_\eta,\xi_\eta\in{\mathcal D}$ such that
$$ \sup_{x\in K} |\xi(x)-\xi_\eta(x)| + \sup_{v\in K} |\phi(v)-\phi_\eta(v)|<\eta.$$
Thus, the following convergence holds by Claim C
\begin{align*}
    \left|\int_{\R^d}\xi\dd \rho_{t_0,t_n}[\phi] - \int_{\R^d}\xi\dd \rho_{t_0,t_0}[\phi]\right| =  \left|\int_{\R^{2d}}\xi\phi\dd T^{t_0,t_n}_\#\mu_{t_n}  - \int_{\R^{2d}}\xi\phi\dd T^{t_0,t_0}_\#\mu_{t_0}\right|\\
    \leq \left|\int_{\R^{2d}}\xi_\eta\phi_\eta\dd T^{t_0,t_n}_\#\mu_{t_n} - \int_{\R^{2d}}\xi_\eta\phi_\eta\dd T^{t_0,t_0}_\#\mu_{t_0}\right| + \left|\int_{\R^{2d}}(\xi_\eta\phi_\eta - \xi\phi)\dd T^{t_0,t_n}_\#\mu_{t_n}\right| + \left|\int_{\R^{2d}}(\xi_\eta\phi_\eta- \xi\phi)\dd T^{t_0,t_0}_\#\mu_{t_0}\right|\\
    \lesssim \left|\int_{\R^{2d}}\xi_\eta\phi_\eta\dd T^{t_0,t_n}_\#\mu_{t_n} - \int_{\R^{2d}}\xi_\eta\phi_\eta\dd T^{t_0,t_0}_\#\mu_{t_0}\right| + \eta(|\xi|_\infty+|\phi|_\infty)\xrightarrow{n\to\infty} \eta(|\xi|_\infty+|\phi|_\infty)
\end{align*}
and due to the arbitrarity of $\eta>0$ convergence \eqref{step3} holds for any $\phi\in C_c^\infty(\R^d)$ and $\xi\in C_b(\R^d)$. The proof is finished.
\end{proof}

\subsection{Existence of solutions to the Euler-alignment system}

We will now finalise the proof of our last main result, Theorem \ref{main3}.

\begin{proof}[Proof of Theorem \ref{main3}]
Fix a compactly supported measure $\rho_0\in{\mathcal P}(\R^d)$  and $u_0\in L^\infty(\rho_0)$. Let the family $\left\{ \mu_t \right\}_{t\in[0,T]}$ be as in Definition \ref{def:MFL}. As explained in Remark \ref{rem:mu=sigmarho} it admits the disintegration \eqref{mudis}, which we recall here for the reader's convenience
    $$ \mu(t,x,v) = \sigma_{t,x}(v)\otimes \rho_t(x)\otimes \lambda^1(t),\quad \mbox{where}\quad \rho_t(x) = \int_{\R^d_v}\dd\mu_t(x,v),\ u(t,x) = \int_{\R^{d}_v}v\dd\sigma_{t,x}(v). $$
 By Definition \ref{def:MFL} and Remark \ref{rem:mu=sigmarho} we have $\rho_t \in C\left( [0,T], \left( \mathcal{ P}(\R^d), d_{BL} \right) \right)$ and  since $\mu$ is uniformly compactly supported, $u \in L^\infty(\rho)$ (measurability is explained in Section \ref{sec:redisint}) and $\rho_t$ is compactly supported with constants as required in item (i) of Definition \ref{weakeuler}.
 
 To prove that the pair $(\rho, u)$ satisfies (ii) let us first point out that by Lemma \ref{lem:disip} and by Propositions \ref{prop:m-p} and \ref{lem:asump} the family $\{\mu_t\}_{t\in[0,T]}$ satisfies the assumptions of Theorem \ref{main1} and thus -- it is monokinetic i.e.
 \begin{equation}\label{mumonodisint}
      \mu(t,x,v) = \delta_{u(t,x)}(v)\otimes \rho_t(x)\otimes \lambda^1(t)  . 
 \end{equation}
  We plug it into the energy dissipation inequality \eqref{disipineq} from Lemma \ref{lem:disip} obtaining (ii) in Definition \ref{weakeuler}.
 
 It remains to prove that the pair $(\rho, u)$ satisfies (iii), i.e. that it actually satisfies the continuity and momentum equations in the weak sense with our fixed initial data $(\rho_0,u_0)$.
    Let us fix any $\phi=\phi(t,x) \in C^1([0,T]\times\R^{2d})$, compactly supported in $[0,T)$. Testing the weak formulation \eqref{weakeqkinetic} for the approximative solution $\mu^N$  with $\phi$ we obtain
    {\small
    \begin{align*}
        0=&\int_{\R^{2d}} \phi(0,x) \dd \mu^N_0 +\int_0^T \int_{\R^{2d}} (\partial_t \phi + v\cdot \nabla_x \phi) \dd \mu_t^N \dd t = \int_{\R^{2d}} \phi(0,x) \dd \rho^N_0 +\int_0^T \int_{\R^{2d}} (\partial_t \phi + v\cdot \nabla_x \phi) \dd \mu_t^N \dd t.
    \end{align*}
    }    
    By Definition \ref{def:MFL} and \eqref{approxini} passing with $N\to\infty$ and then representing $\mu$ in its disintegrated form \eqref{mumonodisint} yields the continuity equation in \eqref{weakmacro}.
    
    The main part of the proof is dedicated to the proof that the pair $(\rho, u)$ satisfies the momentum equation. We perform the proof componentwise.  
    We test with $v_i\phi(t,x)$, where $v_i\in v=(v_1,...,v_d)$, with $\phi(t,x)$ as before, obtaining
    \begin{align*}
       {\mathcal L}^N &:= \int_{\R^{2d}} \phi(0,x) v_i \dd \mu_0^N + \int_0^T\int_{\R^{2d}} (v_i\partial_t \phi + v_i \left( v\cdot \nabla_x\right) \phi ) \dd \mu^N_t \dd t \\
         &= -\frac{1}{2}\int_0^T \int_{\R^{4d}\setminus\Delta} \frac{(\phi - \phi')(v_i-v_i')}{|x-x'|^\alpha} \dd [\mu_t^N \otimes \mu_t^N] \dd t =: {\mathcal R}^N. 
    \end{align*}
    By \eqref{approxini} we have
    $$ \int_{\R^{2d}} \phi(0,x) v_i \dd \mu_0^N = \frac{1}{N}\sum_{k=1}^N\phi(0,x_{k0}^N)(v_{k0}^N)_i = \int_{\R^{d}}\phi(0,x)\dd(m_0^N)_i\xrightarrow{N\to\infty} \int_{\R^{d}}\phi(0,x) u_{i0}\dd \rho_0, $$
    which together with the convergences in Definition \ref{def:MFL} and the monokinetic form \eqref{mumonodisint} yields
    $$ {\mathcal L}^N\xrightarrow{N\to\infty} \int_{\R^{d}}\phi(0,x) u_{i0}\dd \rho_0 +  \int_0^T\int_{\R^{d}} (u_i\partial_t \phi + u_i \left( u\cdot \nabla_x\right) \phi ) \dd \rho_t \dd t =: {\mathcal L}.$$

    
    \noindent
    Now we deal with ${\mathcal R}^N$. For any $m > 0$ we have
    \begin{align*}
        {\mathcal R}^N &= \frac{1}{2}\int_0^T \int_{\R^{4d}\setminus\Delta} \frac{(\phi - \phi')(v_i-v_i')}{\max\{|x-x'|,m\}^\alpha} \dd [\mu_t^N \otimes \mu_t^N] \dd t \\
            &+\frac{1}{2}\Bigg(\int_0^T \int_{\left(\R^{4d}\setminus\Delta\right)\cap \{ |x-x'|< m \}}  \frac{(\phi - \phi')(v_i-v_i')}{|x-x'|^\alpha} \dd [\mu_t^N \otimes \mu_t^N] \dd t \\
            &- \int_0^T \int_{\left(\R^{4d}\setminus\Delta\right)\cap \{ |x-x'|< m \}}  \frac{(\phi - \phi')(v_i-v_i')}{m^\alpha} \dd [\mu_t^N \otimes \mu_t^N] \dd t\Bigg) \\
            &=: {\mathcal G}^N(m)+{\mathcal B}^N(m).
    \end{align*}
For a fixed $m>0$, the integrand in the ''good'' terms ${\mathcal G}^N(m)$ is continuous and equal to $0$ on the diagonal $\Delta$, and thus it is de facto defined on the whole $\R^{4d}$. Consequently, by convergence in Definition \ref{def:MFL}, we have
$$ {\mathcal G}^N(m)\xrightarrow{N\to\infty} \frac{1}{2}\int_0^T \int_{\R^{4d}\setminus\Delta} \frac{(\phi - \phi')(v_i-v_i')}{\max\{|x-x'|,m\}^\alpha} \dd [\mu_t \otimes \mu_t] \dd t =:{\mathcal G}(m).$$
Moreover, since ${\mathcal B^N}(m) = {\mathcal L}^N - {\mathcal G}^N(m)$, the ''bad'' sequence ${\mathcal B}^N(m)$  also converges and
\begin{equation}\label{Gm}
    {\mathcal L}-{\mathcal G}(m) = \lim_{N\to\infty}{\mathcal B^N}(m).
\end{equation} 
Let us upper-bound the right-hand side above. Denote
$$ \{0<|x-x'|<m\}:= \left(\R^{4d}\setminus\Delta\right)\cap \{ |x-x'|< m \}. $$
Making use of the fact that $\phi$ is Lipschitz continuous in $x$ and independent of $v$ we infer that
{\small
    \begin{align}\label{bn}
        \left|{\mathcal B}^N(m)\right| &\le \int_0^T \int_{\{0<|x-x'|<m\}}  \frac{|\phi - \phi'| |v_i-v_i'|}{|x-x'|^\alpha} \dd [\mu_t^N \otimes \mu_t^N] \dd t
         \lesssim \int_0^T \int_{\{0<|x-x'|<m\}} \frac{|v-v'|}{|x-x'|^{\alpha-1}}\dd [\mu_t^N \otimes \mu_t^N] \dd t \nonumber\\
        &\leq\left( \int_0^T \int_{\{0<|x-x'|<m\}} \frac{|v-v'|^2}{|x-x'|^\alpha} \dd [\mu_t^N \otimes \mu_t^N] \dd t \right)^\frac{1}{2}
        \left( \int_0^T \int_{\{0<|x-x'|<m\}} |x-x'|^{2-\alpha} \dd [\mu_t^N \otimes \mu_t^N] \dd t \right)^\frac{1}{2} \\
        & \le \sqrt{E[\mu_0^N]} \left( \int_0^T \int_{\{0<|x-x'|<m\}} |x-x'|^{2-\alpha} \dd [\mu_t^N \otimes \mu_t^N] \dd t \right)^\frac{1}{2}\nonumber
    \end{align}
    }
    Here we have used the H\" older inequality and Proposition \ref{relaxdef}. Now the proof follows divergent paths depending whether $\alpha<2$ or $\alpha=2$ in items (A) and (B) in Theorem \ref{main3}.
    
    $\diamond$ {\it Proof in the case $\alpha<2$.}
    If $\alpha<2$ then following \eqref{bn} we have
    $$ \lim_{N\to\infty}\left|{\mathcal B}^N(m)\right| \lesssim \sqrt{Tm^{2-\alpha}}\xrightarrow{m\to 0} 0,$$
    which, together with \eqref{Gm}, implies that
    $$ {\mathcal L} = \lim_{m\to 0} {\mathcal G}(m). $$
    By the dominated convergence theorem with dominating functions of the form $\frac{|v-v'|^2}{|x-x'|^\alpha}+ |x-x'|^{2-\alpha}$ the limit above is equal to 
    $$ \int_{\R^{d}}\phi(0,x) u_{i0}\dd \rho_0 +  \int_0^T\int_{\R^{d}} (u_i\partial_t \phi + u_i \left( u\cdot \nabla_x\right) \phi ) \dd \rho_t \dd t = -\frac{1}{2}\int_0^T \int_{\R^{4d}\setminus\Delta} \frac{(\phi - \phi')(v_i-v_i')}{|x-x'|^\alpha} \dd [\mu_t \otimes \mu_t] \dd t.  $$

\noindent    
Plugging \eqref{mumonodisint} into the above equation yields
  $$ \int_{\R^{d}}\phi(0,x) u_{i0}\dd \rho_0 +  \int_0^T\int_{\R^{d}} (u_i\partial_t \phi + u_i \left( u\cdot \nabla_x\right) \phi ) \dd \rho_t \dd t = -\frac{1}{2}\int_0^T \int_{\R^{4d}\setminus\Delta} \frac{(\phi - \phi')(u_i-u_i')}{|x-x'|^\alpha} \dd [\rho_t \otimes \rho_t] \dd t.  $$
  Altogether \eqref{weakmacro} is satisfied in the case $\alpha<2$ and the proof of (A) is finished.

 $\diamond$ {\it Proof in the case $\alpha=2$.}
 The proof in the case $\alpha=2$ differs only in the way we upper-bound ${\mathcal B}^N(m)$ in \eqref{bn}. This time we only have
 $$ \lim_{N\to\infty}|{\mathcal B}^N(m)|^2\lesssim \limsup_{N\to\infty} \int_0^T[\mu^N_t\otimes\mu^N_t](\{0\leq|x-x'|\leq m\})\dd t\leq \int_0^T[\mu_t\otimes\mu_t](\{0\leq|x-x'|\leq m\}) \dd t,$$
 which follows by Portmanteau theorem due to the narrow convergence in Definition \ref{def:MFL} and closedness of $[0,T]\times\{0\leq|x-x'|\leq m\}.$ Passing with $m\to 0$ yields
 $$ \lim_{m\to 0}\lim_{N\to\infty}|{\mathcal B}^N(m)|\leq \left(\int_0^T[\mu_t\otimes\mu_t](\Delta) \dd t\right)^\frac{1}{2} = \left(\int_0^T\sum_{k=1}^\infty (\rho_n(t))^2\right)^\frac{1}{2}, $$
 where $\rho_n(t)$ is the mass of $n$th atom of $\rho$ at the time $t$. In particular, if $\rho$ is non-atomic for a.a. $t$ (as we assume in (B) in Theorem \ref{main3}), then the right-hand side above equals $0$ and the proof proceeds as in the case of $\alpha<2$. Thus the proof of (B) is finished.

\end{proof}
\appendix
\section{}
In the appendix we include useful tools and lemmas utilised in the paper as well as a handful of more tedious proofs. We begin by stating the \textit{disintegration theorem}, see \cite{AGS-08}. 
\begin{theo}[\textbf{Disintegration theorem}]\label{disintegration}
For $d_1, d_2\in\mathbb{N}$ denote projection onto the second componenent as $\pi_2:\R^{d_1}\times \R^{d_2} \longrightarrow \R^{d_2}$. For $\mu \in \mathcal{P} \left( \R^{d_1} \times \R^{d_2}\right)$, define its projection onto second factor as $\nu :=(\pi_2)_{\#} \mu \in \mathcal{P}(\R^{d_2})$. Then there exists a family of probabilistic measures $\left\{ \mu_{x_2}\right\}_{x_2 \in \R^{d_2}}\subset \mathcal{P}(\R^{d_1})$, defined uniquely $\nu$ almost everywhere, such that 
\begin{enumerate}[label=(\roman*)] 
    \item The map
    $$
    \R^{d_2} \ni x_2 \longmapsto \mu_{x_2}(B)
    $$
    is Borel-measurable for each Borel set $B \subset \R^{d_1}$;
    \item The following formula 
    $$
    \int_{\R^{d_1}\times\R^{d_2}} \phi(x_1, x_2) \dd \mu(x_1,x_2) = \int_{\R^{d_2}}\left( \int_{\R^{d_1}} \phi(x_1, x_2) \dd \mu_{x_2}(x_1) \right) \dd \nu (x_2)
    $$
    holds for every Borel-measurable map $\phi:\R^{d_1} \times \R^{d_2} \longmapsto [0, \infty)$.
\end{enumerate}
\end{theo}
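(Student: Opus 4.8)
\emph{Overview.} The plan is to prove Theorem~\ref{disintegration} along the classical Radon--Nikodym route. First I build, for a fixed countable generating algebra of subsets of $\R^{d_1}$, the candidate ``conditional masses'' as Radon--Nikodym densities; then I upgrade these finitely additive assignments to genuine Borel probability measures using the Polish structure of $\R^{d_1}$; and finally I verify the measurability in (i), the integral identity in (ii), and uniqueness by routine monotone-class arguments. Uniqueness is the cheap part, so I describe it last.

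\emph{Construction of the densities.} Fix once and for all a countable algebra $\mathcal{A}$ of Borel subsets of $\R^{d_1}$ that generates $\mathcal{B}(\R^{d_1})$ -- for instance the algebra generated by all boxes with rational vertices. For each $B\in\mathcal{A}$ define the finite Borel measure $\nu_B$ on $\R^{d_2}$ by $\nu_B(A):=\mu(B\times A)$. Since $\nu_B(A)\le\mu(\R^{d_1}\times A)=\nu(A)$, we have $\nu_B\ll\nu$, and the Radon--Nikodym theorem provides a density $g_B:=\dd\nu_B/\dd\nu\in L^1(\nu)$, which we fix as a Borel representative with $0\le g_B\le 1$ pointwise. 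Because $\mathcal{A}$ is countable, the statements ``$g_\emptyset=0$ and $g_{\R^{d_1}}=1$'', ``$g_{B_1\cup B_2}=g_{B_1}+g_{B_2}$ for disjoint $B_1,B_2\in\mathcal{A}$'', and ``$g_{B_1}\le g_{B_2}$ for $B_1\subset B_2$ in $\mathcal{A}$'' form altogether a countable list of $\nu$-a.e.\ identities, hence hold simultaneously for every $x_2$ outside a single $\nu$-null set $N_0$. Thus, for $x_2\notin N_0$, the assignment $B\mapsto g_B(x_2)$ is a finitely additive probability content on $\mathcal{A}$.

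\emph{From contents to measures -- the main obstacle.} The essential step is to show that for $\nu$-a.e.\ $x_2$ the content $B\mapsto g_B(x_2)$ is in fact \emph{countably} additive on $\mathcal{A}$, so that Carath\'eodory's extension theorem yields a unique Borel probability measure $\mu_{x_2}$ on $\R^{d_1}$ with $\mu_{x_2}(B)=g_B(x_2)$ for all $B\in\mathcal{A}$ (on the exceptional null set one sets $\mu_{x_2}$ equal to a fixed probability measure, say $\delta_0$). This is where the topology of $\R^{d_1}$ enters, and it cannot be bypassed by purely measure-theoretic means. Since $\mu$ is tight, there is an increasing sequence of compacts $K_m\subset\R^{d_1}$ with $\mu((\R^{d_1}\setminus K_m)\times\R^{d_2})\to 0$; using outer regularity one picks $B_m\in\mathcal{A}$ with $K_m\subset B_m$ and $\mu((\R^{d_1}\setminus B_m)\times\R^{d_2})\to 0$, whence $\int_{\R^{d_2}}g_{\R^{d_1}\setminus B_m}\,\dd\nu\to 0$ and, after passing to a subsequence and enlarging $N_0$ by a further null set, $g_{\R^{d_1}\setminus B_m}(x_2)\to 0$ for the remaining $x_2$. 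This inner approximability by the compacts $K_m\subset B_m$ is precisely the ``compact class'' hypothesis that upgrades a finitely additive content to a measure. Equivalently, and more cleanly, one identifies $\R^{d_1}$ with the open cube $(0,1)^{d_1}\subset[0,1]^{d_1}=:X$, views $\mu$ as a Borel probability on $X\times\R^{d_2}$, applies the Radon--Nikodym argument above to $h\mapsto\int_{X\times\R^{d_2}} h(x_1)\chi_A(x_2)\,\dd\mu$ over a countable $\mathbb{Q}$-subalgebra of $C(X)$ dense in $C(X)$, and invokes the Riesz representation theorem fibrewise; the resulting fibre measures charge $(0,1)^{d_1}$ fully for $\nu$-a.e.\ $x_2$, since $X\setminus(0,1)^{d_1}$ has $\mu$-measure zero. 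This is the implementation in \cite{AGS-08}.

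\emph{Measurability, the integral formula and uniqueness.} For any Borel $B\subset\R^{d_1}$ the map $x_2\mapsto\mu_{x_2}(B)$ is Borel: the collection of $B$ for which this holds contains $\mathcal{A}$ (where it equals the Borel function $g_B$) and is closed under monotone limits by monotone convergence for the $\mu_{x_2}$, hence is a monotone class containing $\mathcal{A}$, hence all of $\mathcal{B}(\R^{d_1})$; this is (i). For (ii), the identity $\int_{\R^{d_1}\times\R^{d_2}}\chi_{B\times A}\,\dd\mu=\int_A g_B\,\dd\nu=\int_A\mu_{x_2}(B)\,\dd\nu(x_2)$ holds for $B\in\mathcal{A}$, $A$ Borel, by construction; it passes to arbitrary Borel $B$ by the same monotone class argument, then to nonnegative simple functions by linearity, then to arbitrary Borel $\phi:\R^{d_1}\times\R^{d_2}\to[0,\infty]$ by monotone convergence. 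Finally, if $\{\mu_{x_2}\}$ and $\{\mu'_{x_2}\}$ both satisfy (i)--(ii), then for each $B\in\mathcal{A}$ and Borel $A$ one has $\int_A\mu_{x_2}(B)\,\dd\nu=\mu(B\times A)=\int_A\mu'_{x_2}(B)\,\dd\nu$, so $\mu_{x_2}(B)=\mu'_{x_2}(B)$ for $\nu$-a.e.\ $x_2$; intersecting these countably many full-measure sets over $B\in\mathcal{A}$ produces a $\nu$-full set on which $\mu_{x_2}$ and $\mu'_{x_2}$ agree on $\mathcal{A}$, hence on $\mathcal{B}(\R^{d_1})$. As indicated, the one genuinely nontrivial point is the countable-additivity upgrade, which rests on the Polish (indeed $\sigma$-compact) structure of $\R^{d_1}$.
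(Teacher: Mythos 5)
The paper does not prove Theorem~\ref{disintegration} at all: it is stated in the appendix as a quoted classical result with a pointer to \cite{AGS-08}, so there is no in-paper argument to compare against. Your proposal is essentially the standard textbook proof (Radon--Nikodym densities over a countable generating algebra, upgrade from finitely additive content to measure via a compact class, then monotone-class bookkeeping for measurability, the integral identity, and uniqueness), and your second variant is indeed the construction used in \cite{AGS-08}. The overall architecture is sound, and you correctly isolate the only genuinely nontrivial point, namely the countable-additivity upgrade.

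One spot in the first implementation is thinner than it should be. Your tightness argument only produces $g_{\R^{d_1}\setminus B_m}(x_2)\to 0$, i.e.\ it rules out mass escaping to infinity; the compact-class criterion for countable additivity requires inner compact approximation of \emph{every} element $B$ of the algebra $\mathcal{A}$, not just of $\R^{d_1}$. For the algebra generated by rational boxes this is easily repaired: each $B\in\mathcal{A}$ is a finite disjoint union of boxes, each box is increasingly exhausted by compact rational boxes $B'_k$ with $\overline{B'_k}\subset B$, and since $\int (g_B-g_{B'_k})\,\dd\nu=\mu((B\setminus B'_k)\times\R^{d_2})\to 0$, monotone convergence gives $g_{B'_k}(x_2)\uparrow g_B(x_2)$ for $\nu$-a.e.\ $x_2$; intersecting over the countably many $B\in\mathcal{A}$ yields the full compact-class hypothesis off a single null set. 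With that sentence added (or simply by committing to your Riesz-representation variant, which avoids the issue entirely), the proof is complete. The remaining steps --- measurability via a monotone class containing $\mathcal{A}$, the identity (ii) by extension from product sets to simple functions to nonnegative Borel $\phi$, and $\nu$-a.e.\ uniqueness from agreement on the countable algebra --- are all correct as written.
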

The family $\{\mu_{x_2} \} $ is called a \textit{disintegration} of $\mu$ with respect to marginal distribution $\nu$. For the sake of simplicity, we shall often refer to formula \textit{(ii)} above as 
\begin{equation}\label{disint_form}
   \mu(x_1, x_2) = \mu_{x_2}(x_1) \otimes \nu(x_2). 
\end{equation}

\subsection{Properties of weak solutions}
Our next goal is to prove that any narrowly continuous and uniformly compactly supported weak solution to \eqref{meso} dissipates kinetic energy.
\begin{proof}[Proof of Proposition \ref{relaxdef}]
Consider the cut-off function
\begin{equation}\label{cutoffbis}
    \psi_{t_1, \varepsilon}(t) := 
    \begin{cases}
        0, \quad t_1+\varepsilon \le t;\\
        1, \quad 0\le t \le t_1 - \varepsilon;\\
        \text{linear on } (t_1-\varepsilon, t_1 + \varepsilon).
    \end{cases}
    \end{equation}
Then $\phi(t,x,v) = \psi_{t_0}(t)|v|^2$ is a suitable test function in our weak formulation. Thus we have
\begin{align*}
    - \int_{\R^{2d}} |v|^2 \dd \mu_0 &= \int_0^T \partial_t\psi_{t_0,\epsilon}(t) \int_{\R^{2d}}|v|^2\dd\mu_t  \dd t 
            +  \int_0^T \psi_{t_0,\epsilon}(t) \int_{\R^{4d}\setminus\Delta} \frac{|v-v'|^2}{|x-x'|^{\alpha}} \dd [\mu_t \otimes \mu_t] \dd t.
\end{align*}
Our solution is continuous in time, when tested with bounded and Lipschitz continuous functions such as $|v|^2$ (restricted to $B(M)$). Thus the function $t\mapsto \int_{\R^{2d}}|v|^2\dd\mu_t = E[\mu_t]$ is continuous. Therefore proceeding as in the proof of Proposition \ref{compactness} we pass with $\epsilon$ to 0 and, by the fundamental theorem of calculus, we obtain
\begin{align*}
    - E[\mu_0] &= E[\mu_{t_0}] 
            +  \int_0^{t_0} D[\mu_t] \dd t.
\end{align*}
Thus the energy equality \eqref{endisineq} is proved for any $t_0\in(0,T)$, which we promptly extend to $t_0\in[0,T]$ by continuity of $t\mapsto E[\mu_t]$.

To prove that each term in \eqref{weakeqkinetic} is well-defined provided that \eqref{endisineq} holds true, we first note that each term in \eqref{weakeqkinetic}, except the last signular one, is well defined by virtue of the fact that $\mu_t$ are probabilistic and -- by item (i) -- compactly supported.
 The last term on the right-hand side is a different story. Using Lipschitz continuity of $\nabla_v\phi$ we infer that
    \begin{align*}
        \frac{\left(\nabla_v \phi(t,x,v) - \nabla_v \phi(t,x',v')\right)\cdot (v - v')}{|x-x'|^{\alpha}} \leq [\nabla_v\phi]_{Lip}\left(|v-v'|^2|x-x'|^{-\alpha}+ |v-v'||x-x'|^{1-\alpha}\right),
    \end{align*}
    where $[\nabla_v\phi]_{Lip}$ denotes the Lipschitz constant of the function $\nabla_v\phi$.
    The above upper-bound is integrable with respect to $[\mu_t\otimes\mu_t]\otimes\lambda^1(t)$. In fact, by the H\" older inequality and (i), we have
    \begin{equation*}
    \begin{split}
        \int_0^T \int_{\R^{4d}\setminus\Delta} \frac{\left(\nabla_v \phi(t,x,v) - \nabla_v \phi(t,x',v')\right)\cdot (v - v')}{|x-x'|^{\alpha}} \dd [\mu_t\otimes \mu_t'] \dd t\\
        \lesssim \int_0^T D[\mu_t]\dd t + \left(\int_0^T D[\mu_t]\dd t\right)^\frac{1}{2}\left(\int_0^T\int_{\R^{4d}\setminus\Delta}|x-x'|^{2-\alpha}\dd [\mu_t\otimes\mu_t']\dd t\right)^\frac{1}{2}\\
        \leq E[\mu_0] + \sqrt{E[\mu_0]} \sqrt{T}((T+1)M)^\frac{2-\alpha}{2},
        \end{split}
    \end{equation*}
    where the last inequality holds only thanks to condition (i), equality \eqref{endisineq} and the fact that $\alpha\leq 2$. 
\end{proof}

We follow with the proof that any weak solution to the kinetic equation \eqref{meso} is locally mass preserving, which we need in the proof of Theorem \ref{main2}.

\begin{prop}[Condition (MP) revisited]\label{solMP}
Weak  solutions to the kinetic equation \eqref{meso} satisfy condition (MP), at least when $C$ is a singleton.
\end{prop}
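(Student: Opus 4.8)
The plan is to test the weak formulation \eqref{weakeqkinetic} against a $v$-independent test function that mimics the indicator of the forward light cone emanating from $(t_0,x_0)$; being independent of $v$ it kills the singular term, and the cone opens at rate $M$, which is exactly the (finite) speed at which $\operatorname*{spt}_v\mu_s\subset B(M)$ allows mass to travel. It suffices to fix $x_0\in\R^d$ and $0\le t_0<t_1\le T$ and prove
$$\rho_{t_0}(\{x_0\})\ \le\ \rho_{t_1}\!\left(\overbar{B(x_0,(t_1-t_0)M)}\right),$$
since $\overbar{B(x_0,(t_1-t_0)M)}=\overbar{\{x_0\}}+(t_1-t_0)\overbar{B(M)}$ and $t_0=t_1$ is trivial. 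First I fix a smooth nondecreasing $\Theta:\R\to[0,1]$ with $\Theta\equiv 0$ on $(-\infty,0]$ and $\Theta\equiv1$ on $[\tfrac12,\infty)$, and a smooth approximation $\rho_\delta(x)=\sqrt{|x|^2+\delta^2}$ of $|x|$ with $|x|\le\rho_\delta(x)\le|x|+\delta$ and $|\nabla\rho_\delta|\le1$. For $0<\delta<\eta/2$ and small $\epsilon>0$ I test \eqref{weakeqkinetic} with $\phi(s,x,v)=\psi_{t_0,t_1,\epsilon}(s)\,\Psi(s,x)$, where $\psi_{t_0,t_1,\epsilon}$ is the time cut-off from \eqref{cutof} (for $t_0=0$ one uses instead the one-sided cut-off from \eqref{cutoffbis}, and the case $t_1=T$ is treated by a limit below) and
$$\Psi(s,x):=\Theta\!\left(\tfrac1\eta\big((s-t_0)M+\eta-\rho_\delta(x-x_0)\big)\right).$$
This $\phi$ is admissible in Definition \ref{weakkinetic}: it is independent of $v$, so $\nabla_v\phi\equiv0$ and the whole singular term in \eqref{weakeqkinetic} vanishes, and for $t_0>0$ we have $\phi(0,\cdot)=0$ once $\epsilon<t_0$.

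The crux is the sign of the transport expression. Since $\partial_s\Psi=\tfrac M\eta\Theta'(\cdot)$ and $\nabla_x\Psi=-\tfrac1\eta\Theta'(\cdot)\nabla\rho_\delta(x-x_0)$, we get
$$\partial_s\Psi+v\cdot\nabla_x\Psi=\tfrac1\eta\Theta'(\cdot)\big(M-v\cdot\nabla\rho_\delta(x-x_0)\big)\ \ge\ 0\quad\text{on }\{|v|\le M\}\supseteq{\rm spt}\,\mu_s,$$
because $\Theta'\ge0$ and $|v\cdot\nabla\rho_\delta(x-x_0)|\le M$. Plugging $\phi$ into \eqref{weakeqkinetic} and writing $\partial_s\phi=\partial_s\psi_{t_0,t_1,\epsilon}\,\Psi+\psi_{t_0,t_1,\epsilon}\,\partial_s\Psi$, $\nabla_x\phi=\psi_{t_0,t_1,\epsilon}\nabla_x\Psi$ therefore gives
$$\int_0^T\partial_s\psi_{t_0,t_1,\epsilon}(s)\int_{\R^{2d}}\Psi(s,x)\,\dd\mu_s\,\dd s=-\int_0^T\psi_{t_0,t_1,\epsilon}(s)\int_{\R^{2d}}\big(\partial_s\Psi+v\cdot\nabla_x\Psi\big)\,\dd\mu_s\,\dd s\ \le\ 0.$$
Because $\Psi$ is bounded and uniformly continuous on the fixed compact supporting all $\mu_s$, and $s\mapsto\mu_s$ is narrowly continuous, the map $s\mapsto\int\Psi(s,x)\,\dd\mu_s$ is continuous. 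Passing $\epsilon\to0$ exactly as in the proof of Proposition \ref{compactness} (the left-hand side is a difference of averages of this continuous function over $(t_0-\epsilon,t_0+\epsilon)$ and $(t_1-\epsilon,t_1+\epsilon)$), and using that $\Psi$ is $v$-independent so that $\int\Psi(\cdot,x)\,\dd\mu_s=\int\Psi(\cdot,x)\,\dd\rho_s$, we arrive at
$$\int_{\R^d}\Psi(t_0,x)\,\dd\rho_{t_0}(x)\ \le\ \int_{\R^d}\Psi(t_1,x)\,\dd\rho_{t_1}(x).$$

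It remains to read off the geometry. For $\delta<\eta/2$ we have $\Psi(t_0,x_0)=\Theta(1-\delta/\eta)=1$ and $\Psi(t_0,\cdot)\ge0$, so the left side is $\ge\rho_{t_0}(\{x_0\})$; on the other hand $0\le\Psi(t_1,\cdot)\le1$ vanishes whenever $\rho_\delta(x-x_0)\ge(t_1-t_0)M+\eta$, hence, using $|x-x_0|\le\rho_\delta(x-x_0)$, it is supported in $\overbar{B(x_0,(t_1-t_0)M+\eta)}$, so the right side is $\le\rho_{t_1}\!\big(\overbar{B(x_0,(t_1-t_0)M+\eta)}\big)$. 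Thus $\rho_{t_0}(\{x_0\})\le\rho_{t_1}\!\big(\overbar{B(x_0,(t_1-t_0)M+\eta)}\big)$ for every $\eta>0$, and letting $\eta\downarrow0$ with continuity from above of the finite measure $\rho_{t_1}$ along the decreasing family of closed balls yields the claim for $t_1<T$. For $t_1=T$ one notes that the inequality already holds with $t_1$ replaced by any $t\in(t_0,T)$ and the fixed larger ball $\overbar{B(x_0,(T-t_0)M)}$; letting $t\to T^-$ and invoking the Portmanteau theorem (narrow continuity of $s\mapsto\rho_s$, closed target set) completes the argument, and the case $t_0=0$ is handled by the one-sided cut-off \eqref{cutoffbis} as in the proof of Proposition \ref{relaxdef}.

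The argument is conceptually simple and the only real obstacles are organizational: one must make sure the cone-shaped test function is genuinely admissible (this is why $|x-x_0|$ is regularized to $\rho_\delta$ and an extra slack $\eta$ is inserted), and one must justify the $\epsilon\to0$ passage through continuity of $s\mapsto\int\Psi(s,\cdot)\,\dd\mu_s$, for which it is essential that all the $\mu_s$ sit in one fixed compact set and $s\mapsto\mu_s$ is narrowly continuous. The genuinely structural point — and the reason condition (MP) is phrased through the $x$-pushforward rather than full characteristics — is that the proof never transports velocities: it only uses that the $x$-marginal moves with speed at most $M$, which is precisely the finite-speed-of-propagation encoded in $\Psi$.
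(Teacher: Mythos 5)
Your proof is correct and follows essentially the same route as the paper's: test the weak formulation with a $v$-independent, time-cut-off ``expanding cone'' function (so the singular term vanishes), use that the cone expands at least as fast as the velocity bound $M$ to get the sign of the transport term, pass $\epsilon\to 0$ by continuity of $s\mapsto\int\Psi(s,\cdot)\,\dd\mu_s$, and shrink the slack parameter to recover the singleton/closed ball. The only differences are cosmetic: the paper builds the cone as a self-similar dilation $\xi\bigl(|x|/(t-t_0+\eta)\bigr)$ exploiting a gap $M'<M$ in the support bound, whereas you use a front moving at exactly speed $M$ with a smoothed distance $\rho_\delta$ and an $\eta$-slack — both yield the same monotonicity and the same limiting argument.
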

 \begin{proof}
    Assume without loss of generality that $C=\{0\}$. Fix any $t_0\in[0,T]$ and assume that $\rho_{t_0}(\{0\})>0$, since if $\rho_{t_0}(\{0\})=0$ then there is nothing to prove. By (i) in Definition \ref{weakkinetic}, measures $\mu_t$ are uniformly compactly supported in $(T+1)B(M)\times B(M)$, thus there exists $M'<M$ such that
    $$ {\rm spt}(\mu_t)\subset  (T+1)B(M')\times B(M') \subset\subset (T+1)B(M)\times B(M),\quad \mbox{for all}\quad t\in[0,T].$$
    Let $0\leq \xi\leq 1$ be a smooth function equal to $1$ on $B(M')$ and to $0$ outside $B(M)$. Let $\psi_\epsilon:=\psi_{t_0,t_1,\epsilon}$ be as in \eqref{cutof} (or as in  \eqref{cutoffbis} if $t_0=0$) and let $\eta>0$. Plugging the function 
    $$\phi(t,x,v) = \psi_\epsilon(t)\xi\left(\frac{|x|}{(t-t_0)+\eta}\right)$$
     into \eqref{weakeqkinetic} yields
     \begin{align*}
         \int_0^T\partial_t\psi_\epsilon(t) \int_{\R^{2d}} \xi\left(\frac{|x|}{t - t_0+\eta}\right)\dd\mu_t\dd t\\
         = \int_0^T\psi_\epsilon(t) \int_{\R^{2d}} \xi'\left(\frac{|x|}{t-t_0+\eta}\right)\left(\frac{|x|}{((t-t_0+\eta)^2} - \frac{x\cdot v}{|x|(t-t_0+\eta)}\right)\dd\mu_t\dd t\geq 0,
     \end{align*}
     where the last inequality above follows from the fact that
     $$ \frac{|x|}{t-t_0+\eta}\geq M' \geq |v|\geq \frac{x\cdot v}{|x|}\quad
    \mbox{whenever}\quad 
    \xi'\left(\frac{|x|}{t-t_0+\eta}\right)\neq 0.$$
    Arguing by the fundamental theorem of calculus as in the proofs of Propositions \ref{relaxdef} and \ref{compactness}, we end up with
    $$ \int_{\R^{d}} \xi\left(\frac{|x|}{t_1-t_0+\eta}\right)\dd\rho_{t_1}\geq \int_{\R^{d}} \xi\left(\frac{|x|}{\eta}\right)\dd\rho_{t_0}, $$
    which implies that
    $$ \rho_{t_1}((t_1-t_0 +\eta)B(M))\geq \rho_{t_0}(\eta B(M')). $$
    Families of balls in the above inequality are decreasing with respect to $\eta\to 0$ and converge (in the sense of intersections) to $(t_1-t_0)\overbar{B(M)}$ and $\{0\}$, respectively. Thus intersecting over $\eta>0$ we recover condition (MP). 
\end{proof}

\subsection{Approximation of the initial data to the macroscopic system}

We begin by stating a basic result on approximation of compactly supported probability measures by empirical measures. The following proposition essentially amounts to the law of large numbers.

\begin{prop}\label{blackbox}
Suppose that a bounded, compactly supported $\mu \in \mathcal{M}_+ (\R^d)$ is given. Then there exists a countable set of pairwise distinct points $\left\{\{x_i^N\}_{i=1}^N \right\}_{N=1}^\infty$ such that 
$$
d_{BL}\left( \frac{1}{N} \sum\limits_{i=1}^N \delta_{x^N_i}(x), \mu \right) \xrightarrow{N\rightarrow \infty} 0.
$$
\end{prop}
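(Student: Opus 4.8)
The plan is to prove Proposition~\ref{blackbox} by reducing to the case of a probability measure and then invoking the strong law of large numbers together with a small perturbation to guarantee distinctness of the sample points. First I would observe that if $\mu = 0$ the statement is trivial (take any distinct points, the empirical measures have total mass $1$ but we may instead rescale — actually one normalises), so assume $\|\mu\|_{TV} =: c > 0$ and set $\nu := \mu/c \in \mathcal P(\R^d)$, compactly supported, say in a ball $B(R)$. It suffices to approximate $\nu$ by empirical measures of the form $\frac1N\sum_{i=1}^N \delta_{x_i^N}$ in $d_{BL}$, since then $\frac cN\sum_{i=1}^N\delta_{x_i^N}\rightharpoonup\mu$; and because all measures live in the fixed compact $B(R)$, the factor $c$ only rescales $d_{BL}$ (more precisely, for measures supported in $B(R)$ one has $d_{BL}(c\sigma_1, c\sigma_2) \le \max\{c,1\}\, d_{BL}(\sigma_1,\sigma_2)$ after adjusting the Lipschitz normalisation, or one simply notes $d_{BL}$ scales linearly in the measure). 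So the core task is: construct pairwise distinct $\{x_i^N\}_{i=1}^N$ with $d_{BL}\big(\frac1N\sum\delta_{x_i^N},\nu\big)\to 0$.

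The key step is the following. Let $(X_i)_{i=1}^\infty$ be i.i.d.\ random variables with law $\nu$ on some probability space. The empirical measures $\nu_N^\omega := \frac1N\sum_{i=1}^N\delta_{X_i(\omega)}$ converge narrowly to $\nu$ almost surely — this is the Varadarajan / Glivenko–Cantelli-type theorem, which follows from the strong law of large numbers applied to a countable convergence-determining family of bounded continuous test functions (e.g.\ finite products of coordinates, or a countable dense subset of $C_b(B(R))$ as used repeatedly elsewhere in this paper). Since all these measures are supported in $B(R)$, narrow convergence is equivalent to $d_{BL}$-convergence (Remark~\ref{rem:compact}). Hence we may fix one $\omega$ for which $d_{BL}(\nu_N^\omega,\nu)\to 0$ and obtain deterministic sample sequences $\{x_i^N := X_i(\omega)\}$. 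The remaining nuisance is that the $x_i^N$ for fixed $N$ need not be distinct (and across different $N$ we do not even need distinctness — the statement only asks for distinctness within each fixed $N$-tuple). To fix this, perturb: replace $x_i^N$ by $\tilde x_i^N := x_i^N + e_i^N$ where $|e_i^N| < 1/N$ and the $e_i^N$ are chosen so that $\tilde x_1^N,\dots,\tilde x_N^N$ are pairwise distinct (possible since the set of "bad" perturbations avoiding collisions is a finite union of proper affine subspaces, hence the complement is dense in any ball). Then $d_{BL}\big(\frac1N\sum\delta_{\tilde x_i^N}, \frac1N\sum\delta_{x_i^N}\big) \le \frac1N\max_i|e_i^N| < 1/N \to 0$, using that for an admissible test function $\phi\in\Gamma_{BL}$ one has $|\phi(\tilde x_i^N) - \phi(x_i^N)| \le |e_i^N|$. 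Combining with the triangle inequality gives $d_{BL}\big(\frac1N\sum\delta_{\tilde x_i^N},\nu\big)\to 0$, and rescaling by $c$ finishes the proof.

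I do not expect a serious obstacle here; the one point requiring a little care is making the reduction from $\mathcal M_+$ to $\mathcal P$ clean — specifically checking how $d_{BL}$ interacts with scalar multiplication of measures on a fixed compact support (one wants the elementary bound $d_{BL}(c\sigma,c\sigma') \lesssim_{c,R} d_{BL}(\sigma,\sigma')$, which holds because on $B(R)$ the $d_{BL}$ distance is comparable to the $1$-Wasserstein distance on normalised measures plus a total-mass term, both of which scale controllably). An alternative that sidesteps this entirely is to run the i.i.d.\ sampling argument directly on $\mu$ rescaled, sample $N$ points from $\nu=\mu/\|\mu\|_{TV}$ and weight each by $\|\mu\|_{TV}/N$; the Varadarajan theorem still applies verbatim. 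Either way the heart of the matter is the almost-sure narrow convergence of empirical measures plus the elementary collision-avoiding perturbation, both of which are standard, so this appendix proposition is genuinely routine — as the paper's own remark ("essentially amounts to the law of large numbers") indicates.
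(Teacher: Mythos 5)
Your proposal is correct and is precisely the argument the paper has in mind: the paper states this proposition without proof, as a black box that ``essentially amounts to the law of large numbers,'' and your Varadarajan-type almost-sure narrow convergence of i.i.d.\ empirical measures (via the SLLN on a countable dense family of test functions on the compact support, then metrized by $d_{BL}$) together with the collision-avoiding perturbation of size $<1/N$ supplies exactly that missing routine proof, including the within-$N$ distinctness needed for non-collisional initial data. One remark: as literally stated the proposition forces $\mu(\R^d)=1$ (test with $\phi\equiv 1$, which gives $d_{BL}\ge|1-\mu(\R^d)|$), so the normalisation issue you flag is a defect of the statement itself rather than of your argument; your weighted version $\frac{\|\mu\|_{TV}}{N}\sum_i\delta_{x_i^N}$ (equivalently, noting $d_{BL}(c\sigma,c\sigma')=c\,d_{BL}(\sigma,\sigma')$) is the correct form, and it is all that the application in Lemma \ref{approx} requires, since there the measures $\rho_0\mres K_N$ have mass within $1/N$ of one and only an error of order $1/N$ is needed.
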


\noindent
Using the above proposition we aim to approximate any initial data admissible in Theorem \ref{main3} by a sequence of empirical measures. The main issue is to properly define the approximation of the velocity $u_0$.

\begin{lem}\label{approx}
Let $\rho_0 \in \mathcal{P}(\R^d)$ be compactly supported and let $u_0:\R^d_x \mapsto \R^d$ belong to $L^\infty(\rho_0)$. 
Then there exists a sequence $\{\{(x_{i0}^N,v_{i0}^N)\}_{i=1}^N\}_{N=1}^\infty\subset \R^{2dN}$
such that the convergences in \eqref{approxini} hold true.
\end{lem}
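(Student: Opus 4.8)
The plan is to construct the empirical approximation in two stages: first approximate the measure $\rho_0$ by empirical measures with pairwise distinct atoms, using Proposition \ref{blackbox}; then attach velocities to those atoms so that the corresponding vector-valued empirical measures converge to $u_0\rho_0$. The subtlety, as flagged in the statement, is that $u_0$ is only defined $\rho_0$-almost everywhere, so we cannot simply evaluate $u_0$ at the points $x_{i0}^N$ produced by Proposition \ref{blackbox} — these points need not lie in the set where $u_0$ is defined, and even if they did, pointwise evaluation of an $L^\infty$ function is meaningless.

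First I would fix the atoms. Apply Proposition \ref{blackbox} to $\mu=\rho_0$ to obtain, for each $N$, pairwise distinct points $\{x_{i0}^N\}_{i=1}^N$ with $\rho_0^N:=\frac1N\sum_{i=1}^N\delta_{x_{i0}^N}\rightharpoonup\rho_0$ in $d_{BL}$. These points are non-collisional by construction, hence admissible initial positions for \eqref{micro}. It remains to choose $v_{i0}^N\in\R^d$ so that $m_0^N:=\frac1N\sum_i v_{i0}^N\delta_{x_{i0}^N}\rightharpoonup u_0\rho_0$ as vector measures. Since $\|u_0\|_{L^\infty(\rho_0)}\le M$, we may and should also demand $|v_{i0}^N|\le M$ for all $i,N$, which is what gives the uniform compact support needed throughout Section \ref{sec:mf}.

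To select the velocities I would use a mollification/averaging argument. Extend $u_0$ to a bounded Borel function on $\R^d$ (for instance by $0$ off $\mathrm{spt}(\rho_0)$), still denoted $u_0$, with $|u_0|\le M$. For a small parameter $\delta>0$ let $u_0^\delta:=u_0*\eta_\delta$ be a smooth mollification; then $u_0^\delta\to u_0$ in $L^1(\rho_0)$ as $\delta\to0$ and $|u_0^\delta|\le M$ (since mollification is an average). Because $u_0^\delta$ is continuous and bounded, $\frac1N\sum_i u_0^\delta(x_{i0}^N)\delta_{x_{i0}^N}\rightharpoonup u_0^\delta\rho_0$ as $N\to\infty$, directly from $\rho_0^N\rightharpoonup\rho_0$ tested against $x\mapsto u_0^\delta(x)\phi(x)$ for $\phi\in\Gamma_{BL}$. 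Now run a diagonal argument: choose $\delta_N\to0$ slowly enough that the doubly-indexed family $\frac1N\sum_i u_0^{\delta_N}(x_{i0}^N)\delta_{x_{i0}^N}$ still converges to $u_0\rho_0$; concretely, pick $v_{i0}^N:=u_0^{\delta_N}(x_{i0}^N)$ with $\delta_N$ chosen so that $d_{BL}\big(\frac1N\sum_i u_0^{\delta_N}(x_{i0}^N)\delta_{x_{i0}^N},\,u_0^{\delta_N}\rho_0\big)<1/N$ (possible since for each fixed $\delta$ this distance tends to $0$ in $N$) while simultaneously $\delta_N$ is small enough that $\|u_0^{\delta_N}-u_0\|_{L^1(\rho_0)}<1/N$. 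Then by the triangle inequality $m_0^N\rightharpoonup u_0\rho_0$, and $|v_{i0}^N|\le M$ throughout, establishing \eqref{approxini}.

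The main obstacle, and the only genuinely delicate point, is precisely this decoupling of the two limits: the points $x_{i0}^N$ are handed to us by Proposition \ref{blackbox} with no control on how they interact with a mollification scale, so one must verify that the rate of convergence $d_{BL}(\frac1N\sum_i u_0^{\delta}(x_{i0}^N)\delta_{x_{i0}^N}, u_0^{\delta}\rho_0)\to0$ as $N\to\infty$ holds \emph{for each fixed} $\delta$ (which it does, since $u_0^\delta$ is bounded Lipschitz, so this is just testing the narrow convergence $\rho_0^N\rightharpoonup\rho_0$ against products $u_0^\delta\,\phi$, $\phi\in\Gamma_{BL}$, and the relevant class of test functions is equi-bounded-Lipschitz with constant depending only on $\delta$), and only afterwards send $\delta=\delta_N\to0$. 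Everything else — non-collisionality, the $|v_{i0}^N|\le M$ bound, and convergence of $\rho_0^N$ — is immediate. I would relegate the verification that mollification preserves the bound and converges in $L^1(\rho_0)$ to a one-line remark, as it is entirely standard.
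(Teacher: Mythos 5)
Your overall architecture (empirical atoms from Proposition \ref{blackbox}, velocities read off from a continuous surrogate of $u_0$, then a diagonal argument in the auxiliary parameter) is the same as the paper's, but the step you dismiss as ``entirely standard'' is where the argument breaks: the claim $u_0^\delta := u_0*\eta_\delta \to u_0$ in $L^1(\rho_0)$ is false for general $\rho_0\in\mathcal{P}(\R^d)$. Mollification convergence is a statement about Lebesgue measure (convergence at Lebesgue points, in $L^p(\lambda^d)$), whereas here $\rho_0$ may be singular with respect to $\lambda^d$ --- e.g.\ purely atomic, or supported on a sphere. In that case any Borel extension of $u_0$ off $\operatorname{spt}(\rho_0)$ is, as an element of $L^1_{loc}(\lambda^d)$, completely decoupled from the $\rho_0$-equivalence class $u_0\in L^\infty(\rho_0)$: for $\rho_0=\delta_{x_0}$ with $u_0(x_0)\neq 0$ and your extension by zero, one gets $u_0^\delta\equiv 0$ for every $\delta$, so $\|u_0^{\delta}-u_0\|_{L^1(\rho_0)}=|u_0(x_0)|$ does not tend to $0$ and the chosen velocities $v_{i0}^N=u_0^{\delta_N}(x_{i0}^N)$ produce $m_0^N\rightharpoonup 0\neq u_0\rho_0$. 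Since the whole point of the lemma (and of Theorem \ref{main3}) is to admit arbitrary compactly supported $\rho_0$, including atoms and vacuum, this is not a removable technicality but a failure of the method on the relevant class of data.

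The paper avoids exactly this by replacing Lebesgue-based smoothing with $\rho_0$-based regularity: Lusin's theorem (with respect to $\rho_0$) yields compacts $K_N$ with $\rho_0(B(M)\setminus K_N)\le 1/N$ on which $u_0$ is continuous, and the Tietze theorem provides a continuous extension $\widetilde{u_0}^{K_N}$ which may be evaluated at the empirical atoms of an approximation of $\rho_0\mres K_N$; the error committed off $K_N$ is controlled by $\rho_0(K_N^c)$ and the $L^\infty$ bound, and a diagonal choice $k_3(N)$ closes the argument. Your proof could be repaired along the same lines by substituting, for your $u_0^\delta$, either the Lusin--Tietze extensions or a $\rho_0$-intrinsic averaging such as $x\mapsto \rho_0(B(x,\delta))^{-1}\int_{B(x,\delta)}u_0\,\dd\rho_0$ (which converges $\rho_0$-a.e.\ by the Besicovitch differentiation theorem for Radon measures); as written, however, the convergence $\|u_0^{\delta_N}-u_0\|_{L^1(\rho_0)}\to 0$ that your triangle inequality relies on is unavailable. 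The remaining ingredients of your proposal --- non-collisionality of the atoms, the bound $|v_{i0}^N|\le M$, and the fixed-$\delta$ convergence of $\frac1N\sum_i u_0^\delta(x_{i0}^N)\delta_{x_{i0}^N}$ to $u_0^\delta\rho_0$ --- are correct.
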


\begin{proof}
    Suppose that $\text{spt}(\rho_0) \subset\subset B(M)$ and that  $\|u_0\|_{L^\infty(\rho_0)}\le M$. For a fixed $N\in \mathbb{N}$ define, using Lusin's theorem, a compact $K_N\subset B(M)$ such that
    \begin{equation}\label{KN}
        u_0 \big|_{K_N} \text{ is continuous and }\, \rho_0\left(B(M)\setminus K_N \right) \le \frac{1}{N}.
    \end{equation}
    
    \noindent
    Moreover let
    $$
    \rho_0 = \rho_0 \mres{K_N} + \rho_0 \mres{\left(B(M) \setminus K_N\right)},
    $$
    where $\rho_0 \mres A$ denotes a restriction of $\rho_0$ to $A$, i.e. $\rho_0 \mres A(B) = \rho_0 (A\cap B)$. Each $\rho_0 \mres K_N$ satisfies the assumptions of Propositon \ref{blackbox}, with $(N-1)/N \le ||\rho_0 \mres K_N ||_{TV} \le 1$. Therefore for each $N$ there exists the sequence $\rho_0^{N,k}$ of the form
    \begin{equation}\label{ro0n}
        \rho_0^{N,k} = \frac{1}{k}\sum\limits_{i=1}^k \delta_{x_{i0}^{N,k}}(x) \text{  such that  } d_{BL} \left( \rho_0^{N,k}, \rho_0 \mres K_N \right) \xrightarrow{k \rightarrow \infty} 0.
    \end{equation}
    
 \noindent   
 In particular, for each $N$ there exists $k_1(N)$ large enough so that
    \begin{equation}\label{k1}
         d_{BL} \left( \rho_0^{N,k}, \rho_0 \mres K_N \right) \le \frac{1}{N}\quad \mbox{for all }\ k\geq k_1(N).
    \end{equation}
   
   \noindent
   
    
  
    
\noindent    Eventually we will construct our approximation of $\rho_0$ based on the family $\rho_0^{N,k}$. Meanwhile, our next step is to better understand the second approximation in \eqref{approxini}, which is equivalent to finding measures
    $$ \nu_{j}^N \xrightharpoonup{N\to\infty} (u_{j0} + M)\rho_0,\qquad u_0 = (u_{10},..., u_{d0}), $$
    where convergence above is in the classical sense of narrow topology for positive Radon measures.   
    We decompose it again with respect to $K_N$ as
    $$
    (u_{j0} + M) \rho_0 = (u_{j0}+M) \rho_0 \mres K_N + (u_{j0} + M)\rho_0 \mres \left( B(M) \setminus K_N\right).
    $$
    In general, $u_0$ need not be defined outside of $K_N$, and we are not able to ensure that points $x_i^k$ belong to $K_N$. To overcome this difficulty, for each $N\in \mathbb{N}$ and a set $K_N$ we use Tietze theorem (\cite[Theorem 1, Section 1.2]{EG-15}) to find a continuous extension $\widetilde{u_0}^{K_N}:\R^d \mapsto \R^d$ such that 
    $
    \widetilde{u_0}^{K_N} = u_0 \text{ on } K_N.
    $
    Coming back to the definition of $\rho_0^{N,k}$ let
    $$
    \nu_j^{N,k} := \frac{1}{k} \sum\limits_{i=1}^N (\widetilde{u_{j0}}^{K_N}( x_{i0}^{N,k}) + M) \delta_{x_{i0}^{N,k}}(x).
    $$
    From the convergence in \eqref{ro0n} we infer that for any fixed $N$ and any bounded continuous function $f$, we have
    $$ \int_{B(M)} (\widetilde{u_{j0}}^{K_N}( x) + M)f \dd \rho_0^{N,k} \xrightarrow{k \rightarrow \infty} \int_{B(M)}  (\widetilde{u_{j0}}^{K_N}( x) + M)f \dd \rho_0 \mres K_N  = \int_{B(M)}  ({u_{j0}}( x) + M)f \dd \rho_0 \mres K_N, $$
    where the last equation above follows from the fact that $\widetilde{u_{j0}}^{K_N}=u_{j0}$ for $\rho_0\mres K_N$ a.a. $x$. Therefore
    
    $$
    \nu_j^{N,k}\xrightharpoonup{k\to\infty} ({u_{j0}} +M) \rho_0 \mres K_N\quad \iff\quad
     d_{BL}\left(  \nu_j^{N,k}, ({u_{j0}} +M) \rho_0 \mres K_N \right)\xrightarrow{k \to \infty} 0.
    $$
    
    \noindent
    Therefore, for each $N$ there exists a $k_2(N)$ large enough so that for all $j\in\{1,...,d\}$ we have
    \begin{equation}\label{k2}
         d_{BL}\left(  \nu_j^{N,k}, ({u_{j0}} +M) \rho_0 \mres K_N \right) \le \frac{1}{N}\quad \mbox{for all }\ k\geq k_2(N).
    \end{equation}
      Consequently, both \eqref{k1} and \eqref{k2} are satisfied for all $k\geq k_3(N):=\max\{k_1(N),k_2(N)\}$. Denote
   $$\rho_0^N:=\rho_0^{N,k_3(N)},\quad \nu_{j}^N := \nu_j^{N,k_3(N)}.  $$
    
    \noindent
    We claim that 
    \begin{equation}\label{conv}
        d_{BL}(\rho_0^N, \rho_0) \rightarrow 0\quad \mbox{and}\quad d_{BL}(\nu_j^N,({u_{j0}} +M) \rho_0)\rightarrow 0,\ \mbox{ for all }\ j\in\{1,...,d\}.
    \end{equation}
    Indeed, we have
      $$
    d_{BL}\left( \rho_0^{N}, \rho_0 \right) \le d_{BL} \left( \rho_0^N, \rho_0 \mres K_N\right) + d_{BL} \left( \rho_0\mres K_N, \rho_0 \right) \le \frac{2}{N} \xrightarrow{N \rightarrow \infty} 0,
    $$
    where the last inequality above follows from \eqref{k1} and from the fact that   $
    d_{BL} \left( \rho_0 \mres K_N, \rho_0 \right) \le \frac{1}{N}
    $
    by the definition of the bounded-Lipschirz distance and by \eqref{KN}. Similarly

    \begin{align*}
    d_{BL}(\nu_j^N,({u_{j0}} +M) \rho_0) &\le d_{BL} \left( \nu_j^N, ({u_{j0}}+M)\rho_0 \mres K_N \right) 
    \\ &+ d_{BL}\left( ({u_{j0}}+M)\rho_0 \mres K_N, ({u_{j0}} +M) \rho_0) \right) \le \frac{1+2M}{N}\xrightarrow{N \rightarrow \infty}0
    \end{align*}
and thus both convergences in \eqref{conv} hold true. It remains to shift each $\nu_j^N$ back by $-M$ and finally define our approximation $(\rho_0^N,m_0^N)$ with
$$ m_{0}^N  := \Big(\nu_1^N - M\rho_0^N,...,\nu_d^N-M\rho_0^N\Big)\quad \mbox{and}\quad v_{i0}^N = \Big(\widetilde{u_{10}}^{K_N}(x_{i0}^{N,k_3(N)}),...,\widetilde{u_{d0}}^{K_N}(x_{i0}^{N,k_3(N)})\Big).
    $$

    
\end{proof}


{\footnotesize
\bibliographystyle{abbrv}
\bibliography{main}
}

\end{document}